\newlength{\cellsize}
\newcommand\tableau[1]{
\vcenter{
\let\\=\cr
\baselineskip=-16000pt
\lineskiplimit=16000pt
\lineskip=0pt
\halign{&\tableaucell{##}\cr#1\crcr}}}
\newcommand{\tableaucell}[1]{{%
\def \arg{#1}\def \void{}%
\ifx \void \arg
\vbox to \cellsize{\vfil \hrule width \cellsize height 0pt}%
\else
\unitlength=\cellsize
\begin{picture}(1,1)
\put(0,0){\makebox(1,1)[c]{$#1$}}
\put(0,0){\line(1,0){1}}
\put(0,1){\line(1,0){1}}
\put(0,0){\line(0,1){1}}
\put(1,0){\line(0,1){1}}
\end{picture}%
\fi}}
\newtheorem{thm}{Theorem}[section]
\newtheorem{lemma}[thm]{Lemma}
\newtheorem{cor}[thm]{Corollary}
\newtheorem{prop}[thm]{Proposition}
\newtheorem{conj}[thm]{Conjecture}
\theoremstyle{definition}
\newtheorem{example}[thm]{Example}
\newtheorem{remark}[thm]{Remark}
\newtheorem{defn}[thm]{Definition}
\numberwithin{equation}{section}
\newcommand{\CC}{\mathbb{C}}
\newcommand{\ZZ}{\mathbb{Z}}
\newcommand{\cal}[1]{{\mathcal{#1}}}
\newcommand{\gr}[1]{\textcolor{ForestGreen}{#1}}
\newcommand{\bl}[1]{\textcolor{blue}{#1}}
\newcommand{\red}[1]{\textcolor{red}{#1}}
\DeclareMathOperator{\pr}{pr}
\newcommand{\rw}{\mathsf{rw}}
\newcommand{\cw}{\mathsf{cw}}
\DeclareMathOperator{\charge}{charge}
\DeclareMathOperator{\SSYT}{SSYT}
\DeclareMathOperator{\jdt}{jdt}
\DeclareMathOperator{\Ind}{Ind}
\newcommand{\scharge}{\mathsf{scharge}}
\DeclareMathOperator\trace{tr}
\DeclareMathOperator{\character}{char}
\DeclareMathOperator{\diag}{diag}
\newcommand{\row}{\mathsf{row}}
\newcommand{\col}{\mathsf{col}}
\begin{document}

\mbox{}
\title[KR Dual Equivalence Graphs]{Kirillov-Reshetikhin Dual Equivalence Graphs}

\author[McDonough]{Joseph McDonough}
\address{School of Mathematics\\University of Minnesota\\Minneapolis, MN 55455}
\email{mcdo1248@umn.edu}

\author[Pylyavskyy]{Pavlo Pylyavskyy}
\address{School of Mathematics\\University of Minnesota\\Minneapolis, MN 55455}
\email{ppylyavs@umn.edu}

\author[Wang]{Shiyun Wang}
\address{School of Mathematics\\University of Minnesota\\Minneapolis, MN 55455}
\email{wang8406@umn.edu}

\date{\today}

\begin{abstract}
Let $U$ be a tensor product of highest weight modules of $GL_n(\mathbb C)$ corresponding to multiples of fundamental weights (i.e. rectangles). We consider three ways to stratify $U^{\otimes k}$ into components: using isotypic components of the cyclic action on tensor factors, using a generalization of the charge statistic, and using certain generalizations of Assaf's dual equivalence graphs. We conjecture that all three ways coincide, and we prove that the latter two ways coincide. The Kirillov-Reshetikhin dual equivalence graphs (KR DEGs) we introduce for this purpose are defined on $0$-weight spaces of tensor products of Kirillov-Reshetikhin crystals. They generalize Kazhdan-Lusztig dual equivalence graphs (KL DEGs) that previously appeared in the study of Kazhdan-Lusztig cells in affine type A. While the tensor products of Kirillov-Reshetikhin crystals are connected as affine crystals, the KR DEGs in general are not. 
\end{abstract}

\maketitle

\section{Introduction}
Let $V = \mathbb C^n$ and let $\phi_U: GL(V) \rightarrow GL(U)$ be a polynomial representation of $GL_n(\mathbb C)$. Consider the $GL(V)$ representation $U^{\otimes k} = U \otimes U \otimes \cdots \otimes U$ and the action of cyclic group $C_k$ on it by cycling tensor factors. One can split $U^{\otimes k}$ into isotypic components of $C_k$, and since the two actions commute, each component carries a representation of $GL(V)$. Thus, we stratify $U^{\otimes k}$ into $k$ components as a $GL_n(\mathbb C)$ representation. To a representation $\phi_U$, one can associate a character $\character_U(x) = \trace(\phi_U(\diag(x)))$, which is a symmetric function given by the trace of the image of a diagonal matrix with diagonal entries $x_i$ for each $i \in [n]$. Recall that cyclic characters $\ell_k^{(m)} = ch(\Ind_{C_k}^{S_k}(\chi^m))$ are Frobenius characters of the inductions of irreducible characters $\chi^m$ of $C_k$ to $S_k$. Cyclic characters were studied by Foulkes \cite{Foulkes72}, and further work connecting them to the charge statistic was done by Kraskiewicz and Weyman \cite{kraskiewiczweyman01} as well as Lascoux, Leclerc and Thibon \cite{LLT94}. It can be shown that characters of the above components of $U^{\otimes k}$ can be obtained as plethysms $\ell_k^{(m)}[\character_U(x)]$. 

Assume now $U$ itself is a tensor product of highest weight modules of $GL_n(\mathbb C)$, where each factor is given by a multiple of a fundamental weight. In other words, $U$ is a tensor product of highest weight modules of rectangular shapes. By a slight abuse of terminology, we shall also refer to them as Kirillov-Reshetikhin modules. There are two other ways to stratify $U^{\otimes k}$ into $k$ components - one using a generalization of the charge statistic introduced by Shimozono \cite{shimozono02} modulo $k$, and one using a certain generalization of Assaf's dual equivalence graphs. We conjecture that all three ways to stratify $U^{\otimes k}$ coincide, and we prove that the latter two ways do. In special cases, the fact that the first two ways coincide is known due to Lascoux, Leclerc, and Thibon \cite{LLT94}, and Iijima \cite{Iijima13}. A related question of splitting a tensor square of a representation into symmetric and antisymmetric parts has also been studied by Carr\'e and Leclerc \cite{carre1995} and Maas-Gari\'epy and T\'etreault \cite{mgt22}. 

Kirillov-Reshetikhin crystals (KR crystals) and their tensor products (which we denote $\otimes$KR crystals) are an important family of affine crystals, which arise in the representation theory of quantum groups \cite{KR1990}. The $\otimes$KR crystals are indexed by sequences of rectangular Young diagrams, and can be realized as the corresponding type A crystal with the extra operators $f_n$ and $e_n$ (often denoted $f_0$ and $e_0$). These action of these operators can be computed explicitly using promotion on tableaux \cite{shimozono02}. Extensive work has been done connecting $\otimes$KR crystals to Demazure crystals, see \cite{shimozono02,naoi13,schillingtingley11,lnsss14,lnsss16,lnsss17}. $\otimes$KR crystals have a grading by the global energy function, which is computed using combinatorial $R$-matrices and the local energy function between tensor factors. Nakayashiki and Yamada \cite{NY1997} proved that the energy function on tensor products of single columns, considered as $\otimes$KR crystals, equals the cocharge of the corresponding semistandard tableau (originally defined by Lascoux and Sch{\"u}tzenberger \cite{LS1978, LS1984}) obtained by insertion via the Robinson-Schensted-Knuth (RSK) correspondence. This result was generalized to more general KR crystals by \cite{Shimozono1995, shimozono02, schillingtingley11, lnsss14, lnsss16}.

Now we take $U$ as a tensor product of KR modules over $GL_n(\mathbb C)$. The character $\character_{U^{\otimes k}}(x)$ is a product of rectangular Schur functions. Associating with $U^{\otimes k}$ a $\otimes$KR crystal, one can define the energy/charge function on it, and further stratify $U^{\otimes k}$ into $k$ disjoint components by the residue of charge modulo $k$. 

Dual equivalence graphs (DEGs) were introduced by Assaf \cite{assaf15} as a tool to prove Schur positivity of symmetric functions. They have deep connections to crystal graphs \cite{assaf08crystals, brauneretal25, chmutovlewispylyavskyy23}. A DEG is a finite graph with vertices given by the standard Young tableaux of a fixed shape, and edges given by  dual Knuth moves. One can also construct DEGs via dual Knuth moves on permutations, which preserve the recording tableau of the permutation under the RSK correspondence. Certain generating functions over DEGs are the Schur functions. Assaf's work then provides axiomatization of DEGs, allowing one to prove Schur positivity by verifying a set of local axioms on graphs.  A $W$-graph is a combinatorial graph encoding the representations of Hecke algebra associated with a Coxeter group $W$. $W$-graphs were introduced in the pioneering work of Kazhdan and Lusztig, further notion of molecules as certain connected components of $W$-graphs was introduced by Stembridge in  \cite{Stembridge08}, \cite{Stembridge12}. Chmutov \cite{chmutov15} showed that in type A molecules are exactly the DEGs as described above. Further, Chmutov, Pylyavskyy, and Yudovina \cite{chmutovpylyavskyyyudovina18} introduced KL DEGs  - an affine analogue of the DEGs, using affine permutations and tabloids. 

The main combinatorial object we introduce in this paper is the Kirillov-Reshetikhin dual equivalence graph (KR DEG). they are defined on the $0$-weight space of a tensor product of KR crystals, in a manner analogous to Assaf's construction in the type A case. We show that, similarly to Assaf's work, the edges of KR DEGs can be realized by commutators of consecutive crystal operators. Perhaps surprisingly, KR DEGs are not always connected, even though the corresponding $\otimes$KR crystals are. This was observed in \cite{chmutovlewispylyavskyy23} for KL DEGs, which are a special case of KR DEGs, see Remark \ref{KLDEG}. The connected components of the corresponding KR DEG provide another natural stratification of $U^{\otimes k}$. The main result of this paper, Theorem \ref{t:connectedcomponents}, shows that this stratification agrees with the stratification based on charge modulo $k$. We note here that, as far as we can tell, KR DEGs are unrelated to the affine dual equivalence graphs defined in \cite{assafbilley12}, which used starred strong tableaux to study $k$-Schur functions.

Our paper is organized as follows: In Section \ref{sec:background}, we introduce definitions and notations, as well as important algorithms and facts on representation of $GL(V)$, tableaux, crystal graphs and dual equivalence graphs. In Section \ref{sec:krdeg}, we construct the KR DEGs from the 0 weight space of $\otimes$KR crystals, and then establish their connections to the affine crystals. In Section \ref{sec:krdegccs}, we study the structure of the KR DEGs and prove that the number of connected components is exactly the $\gcd$ of the multiplicities of each rectangle. Furthermore, the elements in each connected component are determined by the charge statistic. In Section \ref{sec:characters} we discuss the characters associated to connected components of KR DEGs, and conjecture a formula for them involving plethysms with cyclic characters.

\subsection{Acknowledgements}
The authors would like to thank Bernard Leclerc, Vic Reiner, Mark Shimozono and Jean-Yves Thibon for helpful discussions and comments. P.P. was partially supported by DMS-1949896.

\section{Background} \label{sec:background}

\subsection{Tableau Combinatorics}
For a partition $\lambda=(\lambda_1, \lambda_2, \ldots)$, a \emph{semistandard Young tableau} (SSYT) $T$ of shape $\lambda$ is a filling of the cells of the Young diagram of $\lambda$ by elements of $[n] = \{1, \ldots, n\}$ that increases weakly in every row and increases strictly in every column. If $T$ uses each number in $[n]$ only once, we say $T$ has a standard filling. For a pair of partitions $\lambda=(\lambda_1, \lambda_2, \ldots)$ and $\mu=(\mu_1, \mu_2, \ldots)$, satisfying $\mu_i \leq \lambda_i$ for each $i$, the \emph{skew} semistandard (resp. standard) tableaux are defined in the natural way in terms of the skew shape $\lambda / \mu$. Throughout the paper, we use English notation for partitions and tableaux.

Let $T$ be a (skew) tableau. The \emph{row-reading word} of $T$, denoted by $\rw(T)$, is the word obtained by concatenating the entries of $T$ row by row from bottom to top, reading from left to right in each row. The \emph{column-reading word} of $T$, denoted by $\cw(T)$, is the word obtained by concatenating the entries of $T$ column by column from left to right, reading each column from bottom to top.

\begin{example}
    Let $\lambda/\mu=(4,3,2)/(1)$. For $T=\tableau{&1&2&6\\3&3&4\\5&6}$, $\rw(T)=56334126$, and $\cw(T)=53631426.$
\end{example}

Let $T$ be a skew tableau of shape $\lambda/\mu$, and let $a$ be a box that shares at least one edge with $T$ such that adding $a$ to $T$ results in a valid skew shape. We call $a$ an inner (resp. outer) box if $a$ shares lower or right (resp. upper or left) edges with $T$. The \emph{jeu de taquin slide} of $T$ in terms of $a$, denoted by $\jdt_a(T)$, is defined in the following steps: First, mark box $a$ with $*$. If the inner (resp. outer) box $a$ shares only one edge with the box $a_1$ of $T$, then slide $a_1$ into $*$. If $a$ shares edges with two boxes of $T$, choose the one with a smaller (resp. larger) entry and slide the entry into $*$. If the two adjacent entries are equal, then slide the lower one upward (resp. slide the higher one downward). Third, continue the same procedure until $*$ is moved to the outer (resp. inner) boundary. Lastly, remove $*$ to obtain a valid skew tableau $\jdt_a(T)$. 

\begin{example}
    For $T=\tableau{a&1&2&6\\1&3&4\\5&6&b}$ with the inner box $a$ and the outer box $b$ marked. Then $\jdt_a(T)$ is obtained by the sequence of jeu de taquin slides 
    \[\tableau{*&1&2&6\\1&3&4\\5&6}\rightarrow \tableau{1&1&2&6\\*&3&4\\5&6}\rightarrow \tableau{1&1&2&6\\3&*&4\\5&6}\rightarrow \tableau{1&1&2&6\\3&4\\5&6}.\]
    Similarly, $\jdt_b(T)=\tableau{&1&2&6\\&3&4\\1&5&6}$.
\end{example}


\begin{defn}\label{promotion}
Let $T$ be a semistandard tableau with entries at most $n$. The \emph{promotion operator} $\pr$ on $T$ with respect to $n$ is a method of obtaining a new semistandard tableau $\pr(T)$, consisting of the following steps:
\begin{enumerate}
    \item[(1)] Mark all $n$'s in $T$ as outer boxes $*$.
    \item[(2)] Compute $\jdt_n$ from bottom to top, and add the entry $0$ in the empty spaces on the inner boundary. 
    \item[(3)] Increase all entries by $1$.
\end{enumerate}
\end{defn}

\begin{example}
Let $n=6, T=\tableau{1&1&2&6\\3&4\\5&6}$. Then $\pr(T)$ is obtained as follows: 
\begin{align*}
&\tableau{1&1&2&6\\3&4\\5&*}\rightarrow \tableau{1&1&2&6\\3&4\\*&5}\rightarrow \tableau{1&1&2&6\\*&4\\3&5}\rightarrow \tableau{0&1&2&6\\1&4\\3&5}\rightarrow \\
&\tableau{0&1&2&*\\1&4\\3&5}\rightarrow \tableau{0&1&*&2\\1&4\\3&5}\rightarrow \tableau{0&0&1&2\\1&4\\3&5}\rightarrow \tableau{1&1&2&3\\2&5\\4&6}.
\end{align*}
\end{example}

Now, we briefly recall the \emph{row insertion} algorithm of the RSK correspondence, which we will use later to define the \emph{charge} of two tableaux.
Let $T$ be a semistandard tableau. Row inserting a positive integer $k$ into $T$, denoted by $T \leftarrow k$, is defined as follows: Find the left-most cell of the first row of $T$ containing a value strictly larger than $k$. If no such cell exists, append a cell filled with $k$ to the end of the row. Otherwise, replace that cell with $k$, and insert the value that used to be there into the next row using the same procedure.

\begin{example}
    Let $T=\tableau{1&1&2&4&5\\3&3&4&6\\5&7}$. Then $T\leftarrow 2=\tableau{1&1&2&\textbf{2}&5\\3&3&4&\textbf{4}\\5&\textbf{6}\\7}$, where the bumped positions are highlighted in bold.
\end{example}

\subsection{Crystal Graphs}

\begin{defn}
  Consider a word $w$ on the alphabet $[n]$. For $i \in [n-1]$, define the \emph{raising} and \emph{lowering} operators $e_i$ and $f_i$ on $w$ as follows:
\begin{enumerate}[\quad 1.]
    \item[(1)] In the subword of $w$ consisting of only $i$'s and $(i+1)$'s, replace $i+1$ with `` ( '' and $i$ with `` ) ''.
    \item[(2)] Remove matched parentheses.
    \item[(3)] If applying $f_i$, replace the right-most unmatched $i$ with $i+1$. If applying $e_i$, replace the left-most unmatched $i+1$ with $i$. The word remains unchanged if no such unmatched entries exist.
\end{enumerate}
\end{defn}
It is clear that $e_i$ and $f_i$ are inverse operations when they are defined.

\begin{example}
    Let $w=1345264453455$ and $i=4,$ we replace all $4$ with `` ) '' and $5$ with `` ( '' to obtain the sequence of parentheses $\textbf{)}()\textbf{)}()\textbf{((}$. Then we match the parentheses in the usual way, and the unmatched ones are highlighted in bold. Thus, $f_i(w)=1345264\textbf{5}53455$ and $e_i(w)=13452644534\textbf{4}5$.
\end{example}

We can extend this definition to a semistandard tableau $T$ by applying the raising and lowering operators to $\rw(T)$ (resp. $\cw(T)$), and changing the cell of $T$ that corresponds to the letter that was changed. For simplicity, we write $f_i(T)$ and $e_i(T)$. It is well known (see, for instance, \cite[Chapter 3]{bumpschilling17}) that the set of row (resp. column) reading words of semistandard tableaux is closed under $e_i$ and $f_i$ for $i \in [n-1]$.

\begin{defn}\label{crystalg}
    Let $\lambda$ be a partition on alphabet $[n]$. The type $A_{n-1}$ \emph{crystal graph} $B_\lambda$ is a finite directed graph with vertices given by $\SSYT(\lambda)$ and edges colored by $\{1,2,\cdots,n-1\}$. In particular, we have an $i$-colored edge pointing from tableau $T$ to $T'$ if $f_i(T) = T'$. See Figure \ref{KR crystal} for an example.
\end{defn}

For a rectangular partition $\lambda=R=(s^r)$, the type $A_{n-1}$ crystal graph $B_{\lambda}$ admits a type $\widetilde{A}_{n-1}$ crystal structure. The underlying vertices are the same, as are the normal crystal operators $e_i$ and $f_i$ for $i \in [n-1]$. The crystal operators $e_n, f_n$ for the affine node were given explicitly by Shimozono \cite{shimozono02} as
\[
  e_n = \pr^{-1} \circ\ e_1 \circ \pr \quad \text{and} \quad  f_n = \pr^{-1} \circ f_1 \circ \pr
\]
The cyclic symmetry of the $\widetilde{A}_{n-1}$ Dynkin diagram also implies that, more generally, $e_{i+1} = \pr^{-1} \circ\ e_i \circ \pr$ and $f_{i+1} = \pr^{-1} \circ\ f_i \circ \pr$. The classical crystal $B_{R}$ with this added structure is the \emph{Kirillov-Reshetikhin (KR) crystal} $B^{r,s}$, which we will call $B^R$.

\begin{example}
   Let $T=\tableau{1&2\\2&3}$. We have \[f_3(T)=\pr^{-1} \circ f_1 \circ \pr \Big(\,\tableau{1&2\\2&3}\,\Big)=\pr^{-1} \circ f_1 \Big(\,\tableau{1&2\\3&3}\,\Big)=\pr^{-1}\Big(\,\tableau{2&2\\3&3}\,\Big)=\tableau{1&1\\2&2}.\]
   Similarly, $e_3(T)=\tableau{2&2\\3&3}$.
\end{example}

Figure \ref{KR crystal} shows an example of KR crystal graphs.

\begin{figure}[htb] \centering
		\scalebox{0.9}{			
			\begin{tikzpicture}
			\newcommand*{\xdist}{*3}
			\newcommand*{\ydist}{*2.2}
\node (n0) at (0.00\xdist,0\ydist) 
			{$\tableau{1&1\\3&3}$}; 
\node (n1) at (-.8\xdist,1\ydist) 
			{$\tableau{1&1\\2&3}$};
\node (n2) at (.8\xdist,1\ydist) 
			{$\tableau{1&2\\3&3}$};
\node (n3) at (0\xdist,1\ydist) 
			{$\tableau{1&2\\2&3}$};	
\node (n4) at (-1.6\xdist,1\ydist) 
			{$\tableau{1&1\\2&2}$};	
\node (n5) at (1.6\xdist,1\ydist) 
			{$\tableau{2&2\\3&3}$};	
\draw [thick,->] (n0)--(n2) node [left,midway]{$1$};
\draw [thick,->] (n1)--(n0) node [right,midway]{$2$};
\draw [thick,->] (n1)--(n3) node [above,midway]{$1$};
\draw [thick,->] (n3)--(n2) node [above,midway]{$2$};
\draw [thick,->] (n2)--(n5) node [above,midway]{$1$};
\draw [thick,->] (n4)--(n1) node [above,midway]{$2$};
\draw [thick,->,blue] (n3) to[bend right] node [midway,above,inner sep=2pt] {$3$} (n4);
\draw [thick,->, blue] (n5) to[bend right] node [midway,above,inner sep=2pt] {$3$} (n3);
\draw [thick,->,blue] (n2) to[bend left] node [midway,below,inner sep=2pt] {$3$} (n1);
			\end{tikzpicture}	
}			
\caption{The above graph with only black edges is the crystal graph $B_{(2,2)}$ for $n = 3$. Adding in the blue edges, we get the corresponding KR crystal graph $B^{(2,2)}$.} \label{KR crystal}	
\end{figure}
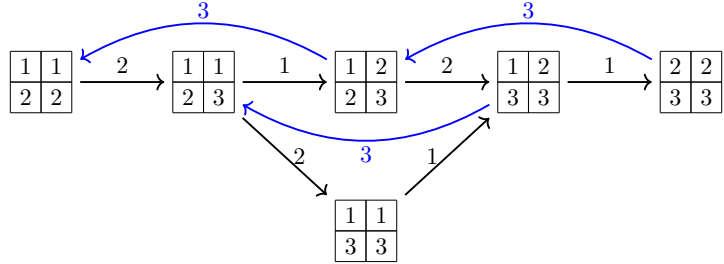

It is easy to see that the set of row (or column) reading words of semistandard tableaux is also closed under $e_n$ and $f_n$.

Let $R=(R_1,\ldots, R_k)$ be a tuple of rectangular partitions, that is, for each $i\in [k]$,  $R_i=(s_i^{r_i})$ for some $s_i, r_i > 0$. We consider the tensor product of KR crystals $B^R=B^{R_1} \otimes \cdots \otimes B^{R_k}$. The vertex set of this graph consists of tuples of SSYT, $T = T_1 \otimes \cdots \otimes T_k$ where $T_i \in SSYT(R_i)$. It is sometimes useful to think of $T$ as a disconnected skew tableau, where the northeast corner of $T_i$ touches the southwest corner of $T_{i+1}$. Unlike their type $A$ analogs (which split into connected components according to the Littlewood-Richardson rule), tensor products of KR crystals are connected.

Following \cite{kang92, shimozono02}, given rectangles $R_1 ,R_2$, there is a unique affine crystal graph isomorphism $\sigma : B^{R_1} \otimes B^{R_2} \to B^{R_2} \otimes B^{R_1}$ called the \emph{combinatorial R-matrix}. It can be easily described in terms of tableaux. Given $T = T_1 \otimes T_2 \in B^{R_1} \otimes B^{R_2}$, $\sigma(T)$ is the unique skew-shape $T_2' \otimes T_1'$ of shape $R_2 \otimes R_1$ obtainable by applying $\jdt$ moves to $T$ (see Example \ref{ex:full_charge_ex} for an example of this computation). For general $\otimes$KR crystals, we let $\sigma_i$ denote the combinatorial $R$-matrix which swaps the $i$ and $i+1$ tensor factors. The $\sigma_i$ satisfy the Yang-Baxter equation, so 
\[
  B^{R_1} \otimes \cdots \otimes B^{R_k} \cong B^{R_1'} \otimes \cdots \otimes B^{R_k'},
\]
where $(R_1',\ldots, R_k')$ is any permutation of $(R_1, \ldots, R_k)$. 

\begin{defn}
Let $R = (R_1, \ldots, R_k)$ be a sequence of rectangles, with $R_i = (s_i^{r_i})$. The $\charge$ function is defined on $B^{R}$ as follows:
\begin{enumerate}[1.]
    \item If $k = 1$, $\charge(T) = 0$ for every $T \in B^{R}$.
    \item If $k = 2$, so $T = T_1 \otimes T_2$, then $\charge(T)$ is equal to the number of cells with row index greater than $\max(r_1, r_2)$ in the tableau $P(\rw(T_1)\rw(T_2))$ obtained by row insertion.
    \item If $k > 2$, then 
    \[
      \charge(T) = \sum_{i < j} \charge(T^{(i)}_{j-1} \otimes T_{j})
    \]
    where $T_{j-1}^{(i)}$ the $(j-1)$'st component of the image of $T$ under the composition of R-matrices $\sigma_{j-2}\cdots \sigma_i$.
\end{enumerate}
\end{defn}
This definition differs slightly from that of \cite{shimozono02}, due to our usage of row insertion instead of column insertion, but it is equivalent. 

\begin{example} \label{ex:full_charge_ex}

Let $R = ((3^3), (2^2),(2^2))$, and consider 

\[
T = \tableau{4&5&6\\ 9 & 13 & 14 \\ 10 & 16 & 17} \otimes \tableau{2 & 8 \\ 3 & 11} \otimes \tableau{1 & 7 \\ 12 & 15}
\]

To compute $\charge(T)$, we must compute $\charge(T_1 \otimes T_2)$, $\charge(T_2 \otimes T_3)$, and \\$\charge(T_2^{(1)} \otimes T_3)$, each of which can be computed by row insertion. The first two are straightforward:
\begin{align*}
    T_1 \leftarrow \rw(T_2) = \ytableaushort{{2}{5}{6}{8}, 
    {3}{11}{14},
    {4}{13}{17},
    {*(blue!30) 9}{*(blue!30)16},
    {*(blue!30) 10}}
    \qquad T_2 \leftarrow \rw(T_3) = \ytableaushort{{1}{7}{12}{15},
    {2}{8},
    {*(blue!30) 3}{*(blue!30)11}}
\end{align*}
To compute the last term, we first compute $T_2^{(1)}$ via the combinatorial $R$-matrix $\sigma_1$:
\[
T_1 \otimes T_2 =
    \tableau{&&&2&8\\
    &&&3&11\\
    4&5&6\\9&13&14\\10&16&17}\quad \xrightarrow{\jdt}\quad
    \tableau{
        &&2&6&8\\
        &&3&11&14\\
        &&5&13&17\\
        4&9\\
        10&16
    } = T_1^{(1)} \otimes T_2^{(1)}
\]
Finally, we see that
\[
  T_2^{(1)} \leftarrow \rw(T_3) = \ytableaushort{{1}{6}{7}{12}{15},
  {2}{8}{14}, {3}{11}{17}, {*(blue!30)5}{*(blue!30)13}}
\]
Thus, $\charge(T) = 3 + 2 + 2 = 7$, where cells that contribue a $+1$ to charge are highlighted in blue.
\end{example}

\begin{prop}[\cite{shimozono02}] \label{p:e_ncharge}
Let $T_1\otimes T_2 \in B^R \otimes B^R$. Then
\[
\charge(e_n(T_1 \otimes T_2)) = \charge(T_1 \otimes T_2) + \begin{cases}
    \hphantom{-}1 & e_n(T_1 \otimes T_2) = e_n(T_1) \otimes T_2 \\
    -1& e_n(T_1\otimes T_2) = T_1 \otimes e_n(T_2)
\end{cases}  
\]
and 
\[
\charge(f_n(T_1 \otimes T_2)) = \charge(T_1 \otimes T_2) + \begin{cases}
    -1 & f_n(T_1 \otimes T_2) = f_n(T_1) \otimes T_2 \\
    \hphantom{-}1 & f_n(T_1\otimes T_2) = T_1 \otimes f_n(T_2)
\end{cases}  
\]
\end{prop}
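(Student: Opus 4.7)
The plan is to analyze the behavior of $\charge$ (as a function of the shape of $P(\rw(T_1)\rw(T_2))$) under a single application of $e_n$, and to reduce the verification to a check on classical highest-weight representatives of each classical component of $B^R\otimes B^R$.

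First, the $f_n$ statement follows from the $e_n$ statement by inverting: $f_n=e_n^{-1}$, the two sub-cases swap roles, and the signs flip consistently. So I focus on $e_n$. By the tensor product rule, $e_n(T_1\otimes T_2)$ equals either $e_n(T_1)\otimes T_2$ (call this case L) or $T_1\otimes e_n(T_2)$ (case R), and I want $\Delta\charge=+1$ in case L and $\Delta\charge=-1$ in case R.

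A key structural input is that the combinatorial $R$-matrix $\sigma\colon B^R\otimes B^R\to B^R\otimes B^R$ is the identity: the identity is an affine crystal isomorphism fixing the classical highest-weight vector $u\otimes u$, so by the uniqueness of $\sigma$ stated in the previous subsection it must equal $\sigma$. Combined with the standard fact that the classical operators $e_i,f_i$ ($i<n$) preserve the RSK shape of $\rw(T_1)\rw(T_2)$, and hence preserve $\charge$, this shows that $\charge$ is constant on each classical component of $B^R\otimes B^R$ and that the ``mixed'' cases (where $e_n$ would act on different tensor factors in $T$ and in $\sigma(T)$) do not arise.

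With these reductions, the proposition boils down to verifying that $\charge$ changes by exactly $+1$ or $-1$ when $e_n$ moves $T$ between adjacent classical components, according to which factor is acted on. I would verify this on a classical highest-weight representative by unwinding $e_n=\pr^{-1}\circ e_1\circ\pr$ on the relevant factor and tracking the change in the shape of $P(\rw(T_1)\rw(T_2))$: the net effect is that a single cell migrates between row $r$ and row $r+1$ of the combined $P$-tableau, in a direction determined by which factor was acted on. The main obstacle is matching signs with our row-insertion (rather than column-insertion) definition of $\charge$, which I would fix by checking a concrete example such as $\tableau{2&2}\otimes\tableau{1&1}\in B^{(2)}\otimes B^{(2)}$ for $n=3$: here $e_3$ acts on the right factor and one directly computes $\Delta\charge=-1$, pinning the convention.
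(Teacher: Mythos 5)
The paper offers no proof of this proposition at all: it is imported wholesale from Shimozono's work on the local energy function, so the only meaningful comparison is with that standard argument. Measured against it, your sketch has a genuine gap at its central reduction. Your preliminary observations are sound: the $f_n$ statement follows from the $e_n$ statement by inversion; $\sigma$ is the identity on $B^R\otimes B^R$; and since $\charge(T_1\otimes T_2)$ depends only on the shape of $P(\rw(T_1)\rw(T_2))$, which the classical operators preserve, charge is constant on classical components. But constancy on components does not reduce the claim to classical highest-weight representatives. The data governing the sign --- which tensor factor $e_n$ acts on, and which component $e_n(T)$ lands in --- is \emph{not} constant on a classical component, so verifying the identity at one representative says nothing about the other vertices of that component. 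Concretely, in $B^{(2)}\otimes B^{(2)}$ with $n=3$, the classical component of shape $(3,1)$ contains both its highest-weight vector $\tableau{1&2}\otimes\tableau{1&1}$, where $e_3$ acts on the left factor and lands in the shape-$(2,2)$ component (so $\charge$ goes from $1$ to $2$), and the vertex $\tableau{2&2}\otimes\tableau{1&3}$, where $e_3$ acts on the right factor and lands in the shape-$(4)$ component (so $\charge$ goes from $1$ to $0$). Checking the highest-weight vector alone misses the second case entirely.

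The second, related problem is that the assertion doing all the work --- that conjugating $e_1$ by promotion changes the shape of $P(\rw(T_1)\rw(T_2))$ by exactly one cell across the line between rows $r$ and $r+1$, in a direction determined by the factor acted on --- is stated but never established, and pinning the sign with a single worked example is not a substitute for an argument. This assertion is precisely the nontrivial content of the Nakayashiki--Yamada/Shimozono theorem. The standard proof runs in the opposite direction from yours: one first establishes the existence of a local energy function $H$ that is constant on classical components and satisfies the $\pm1$ recursion along affine edges (this is where the real work lies, coming from the combinatorial $R$-matrix), and only afterwards identifies $H$ with charge by comparing the two on classical highest-weight vectors --- a comparison that is legitimate there precisely because \emph{both} functions are already known to be constant on components. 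To repair your argument you would need either to import that existence statement explicitly, or to carry out the $P$-shape computation for an arbitrary vertex rather than for one representative per component.
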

\begin{example}
To illustrate the above proposition, we consider $T = \tableau{2&3\\7&8} \otimes \tableau{1&5\\4&6}$.
Note that $\charge(T) = 2$, but $f_8(T) = \tableau{1&3\\2&7} \otimes \tableau{1&5\\4&6}$, and $\charge(f_8(T)) = 1$. 
\end{example}

\subsection{Dual Equivalence Graphs}
Dual equivalence graphs were introduced by Assaf in \cite{assaf15} as a general framework to prove that certain sums of quasi-symmetric functions were symmetric and Schur positive. 
We summarize some results here that will be useful to us. However, we follow the slightly altered conventions of \cite[Section 3]{chmutov15} as they match our notation better.
Fix $n > 0$. A signed color graph is a graph $G = (V,E)$ along with the extra data
\begin{itemize}
    \item For each vertex $x \in V$, a set of descents $D(x) \subset [n-1]$.
    \item For each edge $xy \in E$, a set of colors $\tau(E) \subset [n-2]$.
\end{itemize}
Let $E_i = \{e \in E \mid i \in \tau(e)\}$.
A dual equivalence graph is a signed color graph $G = (V,E ,D, \tau)$,  satisfying the following axioms.
\begin{enumerate}
    \item Suppose that $i \in D(x)$ and $i +1 \notin D(x)$. Then there is a unique $y \in V$ such that $xy \in E_i$, $i \notin D(y)$, $i+1 \in D(y)$. 
    \item If $xy \in E_i$, then $i \in D(x)$ iff $i \notin D(y)$, $i+1 \in D(x)$ iff $i + 1  \notin D(y)$  and $j \in D(x)$ iff $j \in D(y)$ for any $j < i-1$ or $j > i +2$. In other words, walking along an edge in $E_i$ swaps $i$ and $i+1$, and does not affect any labels besides $i-1, i, i+1,$ and $i+2$.
    \item Suppose $xy \in E_i$. If $i -1 \in D(x) \Delta D(y)$ (where $\Delta$ is the symmetric difference), then $i-1 \in D(x)$ iff $i+1 \in D(x)$. Similarly, if $i+2 \in D(x) \Delta D(y)$, then $i +2 \in D(x)$ iff $i \in D(x)$.
    \item If $i < n -2$, and we consider the subgraph of $G$ consisting of only $i$ and $i+1$ colored edges, then every connected component has one of the following three forms:
    \begin{center}
    \scalebox{.7}{
    \begin{tikzpicture}
        \node[shape=circle, scale=0.5, fill=black] (n0) at (0,0) {};

        \node[shape=circle, scale=0.5, fill=black] (n1) at (3,0) {};
        \node[shape=circle, scale=0.5, fill=black] (n2) at (5,0) {};
        \node[shape=circle, scale=0.5, fill=black] (n3) at (7,0) {};
        \draw (n1) -- (n2) node[above, midway] {$i$};
        \draw (n2) -- (n3) node[above, midway] {$i+1$};

        \node[shape=circle, scale=0.5, fill=black] (n3) at (10,0) {};
        \node[shape=circle, scale=0.5, fill=black] (n4) at (12,0) {};
        \draw (n3) -- (n4) node[above, midway] {$i, i+1$};
    \end{tikzpicture}
    }
    \end{center}
    If $i < n-3$, then every connected component of the subgraph of $G$ consisting of $i, i+1$, and $i+2$ colored edges has one of the following four forms:
    \begin{center}
    \scalebox{.7}{   
    \begin{tikzpicture}
        \node[shape=circle, scale=0.5, fill=black] (a0) at (0,0) {};

        \node[shape=circle, scale=0.5, fill=black] (b0) at (3,0) {};
        \node[shape=circle, scale=0.5, fill=black] (b1) at (5,0) {};
        \node[shape=circle, scale=0.5, fill=black] (b2) at (7,0) {};
        \node[shape=circle, scale=0.5, fill=black] (b3) at (9,0) {};
        \node[shape=circle, scale=0.5, fill=black] (b4) at (11,0) {};
        \draw (b0) -- (b1) node[above, midway] {$i, i+1$}; 
        \draw (b1) -- (b2) node[above, midway] {$i+2$}; 
        \draw (b2) -- (b3) node[above, midway] {$i$}; 
        \draw (b3) -- (b4) node[above, midway] {$i+1,i+2$}; 

        \node[shape=circle, scale=0.5, fill=black] (c0) at (-3,-2) {};
        \node[shape=circle, scale=0.5, fill=black] (c1) at (-1,-2) {};
        \node[shape=circle, scale=0.5, fill=black] (c2) at (1,-2) {};
        \node[shape=circle, scale=0.5, fill=black] (c3) at (3,-2) {};
        \draw (c0) -- (c1) node[above, midway] {$i+2$};
        \draw (c1) -- (c2) node[above, midway] {$i+1$};
        \draw (c2) -- (c3) node[above, midway] {$i$};

        \node[shape=circle, scale=0.5, fill=black] (d0) at (5,-2) {};
        \node[shape=circle, scale=0.5, fill=black] (d1) at (7,-2) {};
        \node[shape=circle, scale=0.5, fill=black] (d2) at (9,-1) {};
        \node[shape=circle, scale=0.5, fill=black] (d3) at (9,-3) {};
        \node[shape=circle, scale=0.5, fill=black] (d4) at (11,-2) {};
        \node[shape=circle, scale=0.5, fill=black] (d5) at (13,-2) {};
        \draw (d0) -- (d1) node[above, midway] {$i+1$};
        \draw (d1) -- (d2) node[above, midway] {$i+2$};
        \draw (d1) -- (d3) node[below, midway] {$i$};
        \draw (d2) -- (d4) node[above, midway] {$i$};
        \draw (d3) -- (d4) node[below, midway] {$i+2$};
        \draw (d4) -- (d5) node[above, midway] {$i+1$};
    \end{tikzpicture}
    }
    \end{center}
    \item Suppose $xy \in E_i$ and $yz \in E_j$, with $|i - j| \geq 3$. Then there is some $w$ such that $xw \in E_j$ and $wz \in E_i$.
    \item Consider a connected component of the subgraph of $G$ obtained by restricting to edges with colors $\leq i$. Erasing all $i$ colored edges breaks it down into several components, \emph{any two of which} are connected by an edge in $E_i$.
\end{enumerate}

Given a partition $\lambda$, we can construct a dual equivalence graph $G_\lambda$ whose vertices are standard Young tableaux of shape $\lambda$, where there is an edge between $T$ and $T'$ if we can obtain $T'$ by swapping $i$ and $i+1$ in $T$, with the added condition that the descent sets of $T$ and $T'$ are incomparable with respect to containment. Edges in this graph correspond to dual-Knuth moves on the row-reading words of the tableaux (so, in particular, $G_\lambda$ is connected). It turns out that the $G_\lambda$ are essentially all of the dual equivalence graphs.

\begin{thm}[{\cite[Theorem 3.7]{assaf15}}]
Suppose $G$ is a dual equivalence graph, then every connected component of $G$ is isomorphic to $G_\lambda$ for some $\lambda$.
\end{thm}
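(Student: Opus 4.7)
The plan is to argue by induction on the size $n$ of the alphabet that indexes descents and edge colors. For small $n$ (say $n \leq 3$) axioms 1--6 leave only a very short list of possible connected components, and each of these can be checked directly to be isomorphic to $G_\lambda$ for some partition $\lambda$. This handles the base case.

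For the inductive step, fix a connected component $C$ of $G$ and let $C'$ be the subgraph obtained by deleting every edge colored $n-2$. Axiom 6, applied with $i = n-2$, says that any two components of $C'$ must be joined by at least one edge in $E_{n-2}$, so $C$ is recovered from the pieces of $C'$ by gluing via $(n-2)$-colored edges. Each piece of $C'$ is itself a signed color graph whose remaining edges never touch the descent entry $n-1$ (axioms 2 and 3 localize the effect of an edge colored $j$ to descent entries $j-1, j, j+1, j+2$), so it can be regarded as a dual equivalence graph on an alphabet of size $n-1$. By the inductive hypothesis, each piece is isomorphic to $G_\mu$ for some partition $\mu \vdash n-1$.

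Next I would use the descent information to recover a shape $\lambda \vdash n$ from $C$. Fixing an identification of one piece $D_0 \subset C'$ with $G_{\mu_0}$ and a reference vertex $T_0 \in D_0$, the descent set of $T_0$ determines where to add a box labeled $n$ to the SYT picture of $T_0$, producing a shape $\lambda$ obtained from $\mu_0$ by adding a single corner. Moving between pieces of $C'$ along $(n-2)$-colored edges should then correspond, via axioms 2, 3, and 4, to dual Knuth moves on the three largest letters, and so transports the identification $D_0 \cong G_{\mu_0}$ to identifications of the other pieces with $G_{\mu'}$ for partitions $\mu'$ obtained from $\lambda$ by removing a corner. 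This assembles a candidate bijection $C \to G_\lambda$ preserving descents and edge colors.

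The main obstacle is the consistency of this construction: different paths through $C$ between two vertices, in particular cycles involving several $(n-2)$-colored edges, might a priori produce competing identifications of a single vertex with an SYT of $\lambda$. To rule this out I would exploit the rigid local pictures guaranteed by axiom 4 for the two- and three-color subgraphs on $\{n-3, n-2\}$ and $\{n-4, n-3, n-2\}$, combined with axiom 5 to commute $(n-2)$-colored edges past edges of much smaller color, and axiom 1 to force uniqueness of the neighbor across any $(n-2)$-colored edge. Together these should pin down a single $\lambda$ and make the gluing well-defined; the resulting map is then automatically an isomorphism of signed color graphs, completing the induction.
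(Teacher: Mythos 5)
This statement is quoted from Assaf's paper and is not proved in the present paper at all --- it is imported as \cite[Theorem 3.7]{assaf15} --- so there is no in-paper proof to compare against. Measured against the proof in the literature, your strategy is the right one and is in fact the same one Assaf uses: induct on $n$, delete the top-color edges, invoke the inductive hypothesis on the resulting pieces, and use axiom 6 to see that the component is obtained by gluing copies of various $G_\mu$ along $(n-2)$-colored edges.

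However, your sketch stops exactly where the real work begins, and one intermediate claim is false as stated. You assert that the descent set of a single reference vertex $T_0\in D_0\cong G_{\mu_0}$ ``determines where to add a box labeled $n$.'' It does not: knowing $\mu_0$, the position of $n-1$ in the tableau corresponding to $T_0$, and whether $n-1\in D(T_0)$ only tells you whether the new corner lies strictly below the row of $n-1$ or weakly to its right, and in general several addable corners of $\mu_0$ satisfy the same condition. The shape $\lambda$ is not determined locally at one vertex; it is determined globally by the collection of isomorphism types $\{G_{\mu}\}$ of \emph{all} the pieces of $C'$, and the crux of the theorem is showing (i) that these $\mu$ are precisely the partitions obtained from a single $\lambda$ by removing one corner, (ii) that each such $\mu$ occurs exactly once (this is where the ``any two components are joined by an $(n-2)$-edge'' clause of axiom 6 is essential, since $G_\lambda$ restricted to colors $\le n-2$ has exactly one component per removable corner), and (iii) that the $(n-2)$-edges between pieces agree with the elementary dual equivalence $d_{n-1}$ on standard tableaux. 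Your paragraph on ``consistency'' names the right axioms (1, 4, 5) but replaces the argument with ``should pin down a single $\lambda$''; in Assaf's proof this well-definedness is the bulk of the case analysis and cannot be omitted. As written, the proposal is a correct outline of the known proof with its central step missing.
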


We will use the following observation of Assaf in the type $A$ case to generalize the notion of a dual equivalence graph to affine type $A$.

\begin{thm}[\cite{assaf08crystals}]
The graph $G_\lambda$ can be constructed by taking the $0$-weight space of the type $A$ crystal graph $B_\lambda$, and connecting vertices by commutators of consecutive crystal operators. Explicitly, $T$ and $T'$ are connected by an edge in $G_\lambda$ if and only if $T' = e_{i}e_{i+1}f_{i}f_{i+1}(T)$ or $T' = e_{i+1}e_if_{i+1}f_i(T)$ for some $i \in [n-1]$.
\end{thm}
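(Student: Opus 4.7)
The plan is to prove both directions: every non-trivial commutator produces an edge of $G_\lambda$, and every edge of $G_\lambda$ is realized by one of the two commutators.

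First, I would simplify the crystal operators on the $0$-weight space. Since $T$ has each letter $1,\ldots,|\lambda|$ exactly once, the bracketing rule for $f_j$ reduces to checking whether the unique $j$ appears before or after the unique $j+1$ in the row-reading word; equivalently, $f_j(T)$ is non-zero if and only if $j \notin D(T)$, and in that case $f_j$ replaces the $j$ with a second $j+1$. The dual statement holds for $e_j$. Consequently, a non-zero application of $f_i f_{i+1}$ or $f_{i+1} f_i$ alters only the three cells occupied by $i,i+1,i+2$ in $T$, and the subsequent $e$-operators (when non-zero) restore the $0$-weight condition by undoing precisely those cells. This reduces the entire analysis to the three-letter subconfiguration on $\{i,i+1,i+2\}$.

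Next, I would enumerate the relative row-order configurations of $i,i+1,i+2$ in $T$, a manageable finite list parametrised by which of $i, i+1$ are descents of $T$. For each configuration I compute: (a) the value of each commutator (possibly zero), by applying the simplified descent rule twice in succession to the intermediate tableaux, which have left the $0$-weight space and so contain a repeated letter in the relevant bracketing window; (b) the resulting tableau $T'$ when the commutator is non-zero. A direct check shows the output of each commutator is either $T$ itself or a physical swap within $T$ of either the pair $(i,i+1)$ or the pair $(i+1,i+2)$; moreover, in every non-trivial case the descent sets $D(T)$ and $D(T')$ are incomparable in exactly the window of indices touched by the swap.

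Comparing this with the defining edges of $G_\lambda$ — each $i$-coloured edge is a swap of a consecutive pair whose descent sets flip appropriately — yields a bijection between non-trivial commutators at index $i$ and $i$-coloured edges of $G_\lambda$. The main technical obstacle is the careful bracketing bookkeeping through the intermediate tableaux that leave the $0$-weight space; this is precisely why the statement requires both commutators $e_i e_{i+1} f_i f_{i+1}$ and $e_{i+1} e_i f_{i+1} f_i$, since whether a given edge of $G_\lambda$ is witnessed by one ordering versus the other depends on which of the two consecutive pairs $(i,i+1)$ or $(i+1,i+2)$ is being swapped.
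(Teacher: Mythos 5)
The paper does not prove this statement (it is cited from Assaf), but your proposed argument is essentially the same local case analysis that the paper carries out in full for its affine generalization, Theorem \ref{t:krdeg_crystal}: restrict attention to the relative positions of $i$, $i+1$, $i+2$ in the reading word, use the fact that on the $0$-weight space applicability of $f_j$ is equivalent to $j\notin D(T)$, and check each configuration to see that a non-trivial commutator performs exactly an elementary dual equivalence swap. Your outline is sound and correctly identifies the one technical point (tracking the bracketing through the intermediate non-standard words); it only remains to actually carry out the finite configuration check, as the paper does explicitly in the proof of Theorem \ref{t:krdeg_crystal}.
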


\begin{example}\label{ex:deg}
The following is $G_{(3,2)}$, where the descent set of each tableau is highlighted in red.
\begin{center}
\scalebox{.8}{
\begin{tikzpicture}
    \node (n0) at (0,0) {$\ytableaushort{12{\color{red}3},45}$};
    \node (n1) at (3,0) {$\ytableaushort{1{\color{red}2}{\color{red}4},35}$};
    \node (n2) at (6,0) {$\ytableaushort{{\color{red}1}3{\color{red}4},25}$};
    \node (n3) at (9,0) {$\ytableaushort{{\color{red}1}{\color{red}3}5,24}$};
    \node (n4) at (12,0) {$\ytableaushort{1{\color{red}2}5,34}$};
    \draw (n0) -- (n1) node [above, midway]{$23$};
    \draw (n1) -- (n2) node [above, midway]{$1$};
    \draw (n2) -- (n3) node [above, midway]{$3$};
    \draw (n3) -- (n4) node [above, midway]{$12$};
\end{tikzpicture}}
\end{center}
The left-most edge can be computed using the following sequence of crystal operators on their row words.
\[
    45123 \xrightarrow{f_4} 55123 \xrightarrow{f_3} 55124 \xrightarrow{e_4} 45124 \xrightarrow{e_3} 35124
\]

\end{example}

\begin{remark}
In Example \ref{ex:deg} we have added the label $i$ to an edge if exactly one of the adjacent vertices has a descent at $i$ and the other a descent at $i+1$. In later sections, we will instead label edges by $t_i$, where we obtain one tableau from another by swapping $i$ and $i+1$.
\end{remark}

\subsection{Representation of \texorpdfstring{$GL(V)$}{GL(V)}}
We summarize some results in the representation of $GL(V)$ that will be useful in Section \ref{sec:characters}. We refer the reader to \cite{fulton97, stanley_fomin_1999} for a more thorough treatment. Let $V = \CC^n$, and let $\varphi_U : GL(V) \to GL(U)$ be a polynomial representation. The \emph{character} of $\varphi_U$ is a symmetric polynomial given by $\character_U(x_1,\ldots, x_n) = \trace(\varphi_U(\diag(x_1,\ldots x_n)))$, where $\diag(x_1,\ldots, x_n)$ is a diagonal matrix with diagonal entries $x_i$ for $i \in [n]$. If $U$ is a $GL(V)$ module, $U^{\otimes k}$ is a $GL(V) \times S_k$ module, where $GL(V)$ acts diagonally and $S_k$ permutes tensor factors.

Let $C_k$ be a cyclic group (viewed as a subgroup of $S_k$ generated by a $k$-cycle), and let $\chi : C_k \to \CC^{\times}$ be the irreducible character which maps a generator of $C_k$ to a primitive $k$'th root of unity, so that $\chi,\chi^2,\ldots \chi^k$ are all of the irreducible characters of $C_k$. For $m \geq 0$, let 
$e_m = \frac{1}{k}\sum_{\sigma \in C_k} \chi^{-m}(\sigma)\cdot \sigma \in \CC[C_k]$. A quick calculation shows that $e_m$ is idempotent, and that it projects any $C_k$ representation onto its $\chi^m$-isotypic component. The proofs of the following are quick exercises.
\begin{prop}
Viewing $V^{\otimes k}$ as a $GL(V) \times C_k$ module by restriction, we have 
\[
  \character_{e_m V^{\otimes k}}(x_1,\ldots, x_n) = \ell_k^{(m)}
\]
where $\ell_k^{(m)}$ is the Fr\"obenius image of $\Ind_{C_k}^{S_k}(\chi^m)$.
\end{prop}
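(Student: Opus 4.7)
The plan is to reduce the statement to a calculation using Schur--Weyl duality together with Frobenius reciprocity. First I would invoke Schur--Weyl duality to write
\[
  V^{\otimes k} \;\cong\; \bigoplus_{\substack{\lambda \vdash k \\ \ell(\lambda) \leq n}} S^\lambda V \otimes M^\lambda
\]
as a $GL(V) \times S_k$ module, where $S^\lambda V$ has $GL(V)$-character $s_\lambda(x_1,\ldots,x_n)$ and $M^\lambda$ is the Specht module. Since $e_m \in \CC[C_k]$ commutes with the action of $GL(V)$, restricting the $S_k$-action to $C_k$ and applying $e_m$ picks out the $\chi^m$-isotypic component on the $S_k$-factor:
\[
  e_m V^{\otimes k} \;\cong\; \bigoplus_\lambda S^\lambda V \otimes e_m(M^\lambda|_{C_k}).
\]

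Next I would take the $GL(V)$-character, which just counts the dimension of the multiplicity space:
\[
  \character_{e_m V^{\otimes k}}(x_1,\ldots,x_n) \;=\; \sum_\lambda \dim\bigl(e_m M^\lambda|_{C_k}\bigr) \cdot s_\lambda(x_1,\ldots, x_n).
\]
By the standard property of $e_m$ recalled in the text, $\dim(e_m M^\lambda|_{C_k})$ is the multiplicity of $\chi^m$ in $M^\lambda|_{C_k}$, and by Frobenius reciprocity this equals the multiplicity of $M^\lambda$ in $\Ind_{C_k}^{S_k}(\chi^m)$.

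Finally, I would recognize the resulting sum as the Frobenius image of $\Ind_{C_k}^{S_k}(\chi^m)$: writing $\Ind_{C_k}^{S_k}(\chi^m) = \bigoplus_\lambda c_\lambda M^\lambda$ with $c_\lambda = \langle M^\lambda, \Ind_{C_k}^{S_k}(\chi^m)\rangle_{S_k}$, the Frobenius characteristic is exactly $\sum_\lambda c_\lambda s_\lambda = \ell_k^{(m)}$. Combining these identifications gives $\character_{e_m V^{\otimes k}} = \ell_k^{(m)}$, as desired. There is no real obstacle here beyond being careful that the $GL(V)$-action and the $C_k$-action commute (so that $e_m$ is a $GL(V)$-equivariant projector), which is immediate since one acts diagonally on tensor factors and the other permutes them.
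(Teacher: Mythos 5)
Your argument is correct and is exactly the standard one the paper has in mind when it declares the proposition a ``quick exercise'': Schur--Weyl duality to split off the multiplicity spaces, $GL(V)$-equivariance of the idempotent $e_m$, and Frobenius reciprocity to identify $\dim(e_m M^\lambda|_{C_k})$ with $\langle \Ind_{C_k}^{S_k}(\chi^m), M^\lambda\rangle$. The only point worth making explicit is that the Schur--Weyl sum runs over $\lambda$ with $\ell(\lambda)\leq n$, so the identity with $\ell_k^{(m)}$ is as symmetric polynomials in $n$ variables, where $s_\lambda(x_1,\ldots,x_n)=0$ for $\ell(\lambda)>n$ makes the two sums agree.
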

\begin{cor}
Let $U$ be a polynomial representation of $GL(V)$. Then 
\[
\character_{e_m U^{\otimes k}}(x_1,\ldots, x_n) = \ell^{(m)}_{k} \circ \character_U(x_1,\ldots, x_n)
\]
\end{cor}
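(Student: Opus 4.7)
The plan is to deduce the corollary from the preceding proposition by passing to the diagonal torus $T\subset GL(V)$ and then invoking the definition of plethysm as substitution of monomials. The entire content of both sides of the identity lies in the restriction to $T$, since characters of polynomial $GL(V)$-representations are determined by their values on diagonal matrices, and the $C_k$-action on tensor factors commutes with this restriction (so the projector $e_m\in\CC[C_k]$ acts $T$-equivariantly).

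First I would fix a basis of weight vectors $v_1,\dots,v_N$ of $U$ with respective $T$-weights $y_1,\dots,y_N$ (monomials in $x_1,\dots,x_n$, listed with multiplicity), so that $\character_U(x_1,\dots,x_n)=y_1+\cdots+y_N$. Then $U^{\otimes k}$ has the tensor basis $v_{j_1}\otimes\cdots\otimes v_{j_k}$ on which $T$ acts by $y_{j_1}\cdots y_{j_k}$ and $C_k$ acts by cyclic permutation of indices. This is the same $T\times C_k$-module structure as on $\widetilde V^{\otimes k}$, where $\widetilde V$ is an $N$-dimensional space on which an auxiliary torus $\widetilde T$ acts diagonally with characters $y_1,\dots,y_N$. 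Since $e_m$ depends only on the $C_k$-factor, $e_m U^{\otimes k}$ and $e_m\widetilde V^{\otimes k}$ have identical torus characters.

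Applying the previous proposition to $\widetilde V$ in place of $V=\CC^n$ (its proof never used the specific dimension of $V$, only the character of its torus action), I obtain
\[
\character_{e_m\widetilde V^{\otimes k}}(y_1,\dots,y_N)\;=\;\ell_k^{(m)}(y_1,\dots,y_N).
\]
By the defining property of plethysm --- substitution of the monomial expansion $\character_U=\sum_j y_j$ into a symmetric function --- the right side equals $\ell_k^{(m)}\circ\character_U(x_1,\dots,x_n)$, and the left side equals $\character_{e_m U^{\otimes k}}(x_1,\dots,x_n)$ by the identification of $T\times C_k$-modules in the previous step. Combining these yields the claimed formula.

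There is no serious obstacle here; the argument is essentially mechanical, and the only point requiring care is to confirm that the preceding proposition is robust under changing the torus (i.e.\ does not rely on $V$ being literally $\CC^n$ with its standard torus), which is immediate from any of the standard proofs via Schur--Weyl duality or via the explicit $S_k$-decomposition of $V^{\otimes k}$.
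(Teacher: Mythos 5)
Your argument is correct. Note that the paper offers no proof of this corollary (or of the preceding proposition) — both are explicitly left as ``quick exercises'' — so there is nothing to compare against line by line; but your reduction is exactly the intended one: pass to a weight basis of $U$, observe that $U^{\otimes k}$ as a $T\times C_k$-module is the pullback of $\widetilde V^{\otimes k}$ along $t\mapsto \diag(y_1(t),\dots,y_N(t))$, apply the proposition to $\widetilde V$ (whose proof via $\trace(t\sigma)=p_{k/\gcd(j,k)}^{\gcd(j,k)}$ is indeed dimension-independent), and invoke the definition of plethysm as substitution of the monomials of $\character_U$. No gaps.
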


\section{KR dual equivalence graphs} \label{sec:krdeg}
Recall that we have $R=(R_1,\ldots, R_k)$ be a sequence of rectangular partitions such that for each $i\in [1,k], R_i=(s_i^{r_i})$ for some $s_i, r_i > 0$. We fix this notation for the rest of the paper. In this section, we introduce a dual equivalence graph defined based on $\otimes$KR crystals $B^R$. We begin by defining a set of vertices $V$ and the descent set for each vertex of $V$ as follows.

\begin{defn}\label{descentset}
   Let $V$ be the set consisting of all weight $0$ elements of $\otimes$KR crystals $B^R=B^{R_1} \otimes \cdots \otimes B^{R_k}$. For an element $T=T_1 \otimes \cdots \otimes T_k\in V$, the \emph{descent set} $D(T)$ of $T$ is defined by 
    \begin{align*}
    D(T) & =\big\{i\in [n-1]: i+1 \,\,\text{appears to the left of}\,\, i\,\, \text{in}\,\, \rw(T)\big\}\\
        &\quad \cup\big\{n: \text{whenever}\,\, 1 \,\,\text{is in }D(\pr(T))\big\}.
    \end{align*}
\end{defn}

\begin{defn}\label{edgeset}
Following the notation from Definition \ref{descentset}, for any $T, T'\in V$ and $i\in \{1,2,\cdots,n\},$ we connect $T$ and $T'$ with an edge if $T'$ can be obtained from $T$ by doing one of the following sequences of moves. 
\begin{enumerate}
    \item For $1\leq i\leq n-1$, we swap $i$ and $i+1$ to obtain $T'$ and $D(T)$ and $D(T')$ are incomparable with respect to inclusion. In this case we write $t_i(T)=T'$.
    \item For $i=n$, if $n\in T_\ell$ and $i+1=1\in T_j$ are not in the same tensor factor, we compute $\jdt_n(T_\ell)$ with replacing the created inner box with $1$, and compute $\jdt_1(T_j)$ with replacing the created outer box with $n$. We obtain $T'=t_n(T)$ if $D(T)$ and $D(T')$ are incomparable.
    \item If $1,2$ and $n$ are in the same tensor factor $T_j$ of $T$ such that $\rw(T_j)=(n\cdots1\,2\cdots)$ and $n\in D(T)$, then we can find $T'$ by the sequence of moves: (i) Apply $\jdt_n(T_j)$ to get an inner box $*$; (ii) Switch $1$ and $2$; (iii) Apply $\jdt_*$ to the tableau obtained from the last move, and replace the resulting outer box with $n$. We denote this edge by $\bar{t}_1(T)=T'$.
    \item If $1,n-1$ and $n$ are in the same tensor factor $T_j$ of $T$ such that $\rw(T_j)=(\cdots n\cdots n-1\cdots 1\cdots)$ and $n\notin D(T)$, then we can find $T'$ by the sequence of moves: (i) Apply $\jdt_n(T_j)$ and fill the inner box with $1$; (ii) Apply $\jdt_{n-1}$ to the tableau obtained from the last move and get an inner box $*$; (iii) Apply $\jdt_1$ for the $1$ in the first row of the tableau followed by applying $\jdt_{*}$, and then fill the two outer boxes with $n-1$ and $n$. We denote this edge by $\bar{t}_{n-1}(T)=T'$.
\end{enumerate}
\end{defn}
We note that edges coming from case (1) correspond to classical dual equivalence moves on the skew tableaux of the shape defined by $R$.

\begin{example} We use the following examples to illustrate the establishment of the edges between $T$ and $T'$, corresponding to the four cases we described above.
    \begin{enumerate}
        \item Let $T=T_1\otimes T_2=\tableau{1&2\\3&5}\otimes \tableau{4&7\\6&8}$ with $D(T)=\{2,4,7,8\}$. For $i=3$, $T'=t_3(T)=\tableau{1&2\\4&5}\otimes \tableau{3&7\\6&8}$ with $D(T')=\{3,7,8\}$. For $i=5,$ $T$ does not have an edge $t_5$ since after switching $5$ and $6$, $D\Big(\,\tableau{1&2\\3&6}\otimes \tableau{4&7\\5&8}\,\Big)=\{2,4,5,7,8\}\supseteq \{2,4,7,8\}=D(T)$. 
        \item For the same $T$ in $(1)$, let $i=8.$ We find $\jdt_8(T_2)$ and replace the created inner box with $1$ to obtain $T'_2=\tableau{1&4\\6&7}$. Also, find $\jdt_1(T_1)$ and replace the created outer box with $8$ gives $T'_1=\tableau{2&5\\3&8}$. Thus $T'=t_8(T)=\tableau{2&5\\3&8}\otimes \tableau{1&4\\6&7}$ with $D(T')=\{1,2,4,7\}.$
        \item Let $T=T_1\otimes T_2=\tableau{1&2&8\\5&10&12\\7&11&13}\otimes \tableau{3&6\\4&9}$ with $D(T)=\{3,4,6,9,10,12,13\}$. $T$ falls into Case (3) of Definition \ref{edgeset}. We take the sequence of moves as \[T_1=\tableau{1&2&8\\5&10&12\\7&11&13}\xrightarrow{\text{$\jdt_{13}$}}\tableau{*&2&8\\1&5&10\\7&11&12}\xrightarrow{\parbox{8mm}{\tiny{switch\\ $1, 2$}}}\tableau{*&1&8\\2&5&10\\7&11&12}\xrightarrow{\text{$\jdt_*$}}\tableau{1&5&8\\2&10&12\\7&11&13},\] and \[\bar{t}_1(T)=T'=T'_1\otimes T'_2=\tableau{1&5&8\\2&10&12\\7&11&13}\otimes \tableau{3&6\\4&9}\] with $D(T')=\{1,3,4,6,9,10,12\}.$

        Now $T'$ falls into Case (4) of Definition \ref{edgeset}. We take the sequence of moves on $T'_1$ as \[T'_1=\tableau{1&5&8\\2&10&12\\7&11&13}\xrightarrow{\text{$\jdt_{13}$}}\tableau{1&1&8\\2&5&10\\7&11&12}\xrightarrow{\text{$\jdt_{12}$}}\tableau{*&1&8\\1&5&10\\2&7&11}\xrightarrow{\text{$\jdt_1$}}\tableau{*&5&8\\1&7&10\\2&11&12}\xrightarrow{\text{$\jdt_*$}}\tableau{1&5&8\\2&7&10\\11&12&13},\] and \[\bar{t}_{12}(T')=T''=T''_1\otimes T''_2=\tableau{1&5&8\\2&7&10\\11&12&13}\otimes \tableau{3&6\\4&9}\] with $D(T'')=\{1,3,4,6,9,10,13\}.$
    \end{enumerate}
\end{example}

Now we are ready to define our dual equivalence graph.

\begin{defn}
Let $R = (R_1, \ldots, R_k)$ be a sequence of rectangles. The \emph{Kirillov-Resehtikhin dual equivalence graph} (KR DEG) of $R$, denoted $\cal{T}(R)$, is the graph with vertex set given by weight $0$ elements of $B^{R_1} \otimes \cdots \otimes B^{R_k}$. Two vertices $T, T'$ are connected by an edge if $g(T) = T'$, for some $g \in \{t_1,\ldots, t_n, \bar{t}_1, \bar{t}_{n-1}\}$.
\end{defn}

Figure \ref{KR DEG} shows an example of the KR DEG $\mathcal{T}((2^2),(1))$.
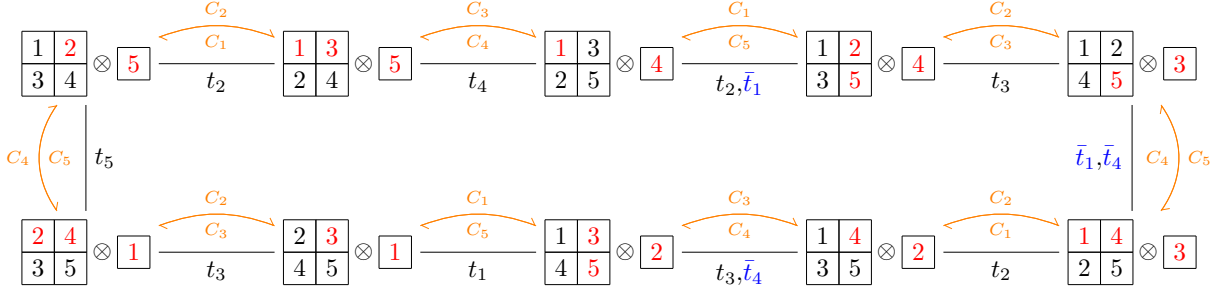
\begin{figure}[htb] \centering
		\scalebox{.95}{			
			\begin{tikzpicture}
			\newcommand*{\xdist}{*3}
			\newcommand*{\ydist}{*2.2} 
\node (n1) at (1.22\xdist,1.2\ydist) 
			{$\tableau{1&\red{2}\\3&\red{5}}\otimes \tableau{\red{4}}$};
\node (n2) at (-1.22\xdist,1.2\ydist) 
              {$\tableau{\red{1}&\red{3}\\2&4}\otimes \tableau{\red{5}}$};
\node (n3) at (0\xdist,1.2\ydist) 
			{$\tableau{\red{1}&3\\2&5}\otimes \tableau{\red{4}}$};	
\node (n4) at (-2.44\xdist,1.2\ydist) 
			{$\tableau{1&\red{2}\\3&4}\otimes \tableau{\red{5}}$};	
\node (n5) at (2.44\xdist,1.2\ydist) 
			{$\tableau{1&2\\4&\red{5}}\otimes \tableau{\red{3}}$};	
\node (n6) at (1.22\xdist,0\ydist) 
			{$\tableau{1&\red{4}\\3&5}\otimes \tableau{\red{2}}$};
\node (n7) at (-1.22\xdist,0\ydist) 
			{$\tableau{2&\red{3}\\4&5}\otimes \tableau{\red{1}}$};
\node (n8) at (0\xdist,0\ydist) 
			{$\tableau{1&\red{3}\\4&\red{5}}\otimes \tableau{\red{2}}$};	
\node (n9) at (-2.44\xdist,0\ydist) 
			{$\tableau{\red{2}&\red{4}\\3&5}\otimes \tableau{\red{1}}$};	
\node (n10) at (2.44\xdist,0\ydist) 
			{$\tableau{\red{1}&\red{4}\\2&5}\otimes \tableau{\red{3}}$};	
\draw [-{Straight Barb[left]},orange](n2) to [bend right=20, edge label=$\scriptstyle{C_1}$] (n4);
\draw [-{Straight Barb[left]},orange](n4) to [bend left=20, edge label=$\scriptstyle{C_2}$] (n2);
\draw [-] (n2) -- (n4) node [below,midway]{$t_2$};
\draw [-{Straight Barb[left]},orange](n3) to [bend right=20, edge label=$\scriptstyle{C_4}$] (n2);
\draw [-{Straight Barb[left]},orange](n2) to [bend left=20, edge label=$\scriptstyle{C_3}$] (n3);
\draw [-] (n2) -- (n3) node [below,midway]{$t_4$};
\draw [-{Straight Barb[left]},orange](n1) to [bend right=20, edge label=$\scriptstyle{C_5}$] (n3);
\draw [-{Straight Barb[left]},orange](n3) to [bend left=20, edge label=$\scriptstyle{C_1}$] (n1);
\draw [-] (n3)--(n1) node [below,midway]{$t_2$,\bl{$\bar{t}_1$}};
\draw [-{Straight Barb[left]},orange](n5) to [bend right=20, edge label=$\scriptstyle{C_3}$] (n1);
\draw [-{Straight Barb[left]},orange](n1) to [bend left=20, edge label=$\scriptstyle{C_2}$] (n5);
\draw [-] (n1)--(n5) node [below,midway]{$t_3$};
\draw [-{Straight Barb[left]},orange](n7) to [bend right=20, edge label=$\scriptstyle{C_3}$] (n9);
\draw [-{Straight Barb[left]},orange](n9) to [bend left=20, edge label=$\scriptstyle{C_2}$] (n7);
\draw [-] (n9)--(n7) node [below,midway]{$t_3$};
\draw [-{Straight Barb[left]},orange](n8) to [bend right=20, edge label=$\scriptstyle{C_5}$] (n7);
\draw [-{Straight Barb[left]},orange](n7) to [bend left=20, edge label=$\scriptstyle{C_1}$] (n8);
\draw [-] (n7)--(n8) node [below,midway]{$t_1$};
\draw [-{Straight Barb[left]},orange](n6) to [bend right=20, edge label=$\scriptstyle{C_4}$] (n8);
\draw [-{Straight Barb[left]},orange](n8) to [bend left=20, edge label=$\scriptstyle{C_3}$] (n6);
\draw [-] (n8)--(n6) node [below,midway]{$t_3$,\bl{$\bar{t}_4$}};
\draw [-{Straight Barb[left]},orange](n10) to [bend right=20, edge label=$\scriptstyle{C_1}$] (n6);
\draw [-{Straight Barb[left]},orange](n6) to [bend left=20, edge label=$\scriptstyle{C_2}$] (n10);
\draw [-] (n6)--(n10) node [below,midway]{$t_2$};
\draw [-{Straight Barb[left]},orange](n4) to [bend right=35, edge label=$\scriptstyle{C_5}$] (n9);
\draw [-{Straight Barb[left]},orange](n9) to [bend left=35, edge label=$\scriptstyle{C_4}$] (n4);
\draw [-] (n4)--(n9) node [right,midway]{$t_5$};
\draw [-] (n5)--(n10) node [left,midway]{\bl{$\bar{t}_1$},\bl{$\bar{t}_4$}};
\draw [-{Straight Barb[left]},orange](n10) to [bend right=35, edge label=$\scriptstyle{C_4}$] (n5);
\draw [-{Straight Barb[left]},orange](n5) to [bend left=35, edge label=$\scriptstyle{C_5}$] (n10);

			\end{tikzpicture}	
}			
\caption{The KR DEG $\mathcal{T}((2^2),(1))$. The elements of the descent set of each vertex are shown in red. The edges $t_1$-$t_5$ are shown in black and the special edges $\bar{t}_1,\bar{t}_4$ are shown in blue. The edges equivalently obtained by crystal commutators $C_1$-$C_5$ are shown in orange.}\label{KR DEG}	
\end{figure}

Next, we show the connection between our KR DEGs and affine crystals, as a generalization of the result by Assaf \cite{assaf08crystals} on DEGs in type A.

\begin{thm} \label{t:krdeg_crystal}
For any $i\in [n]$, let $C_i$ be the crystal commutator consisting of the composition of the consecutive crystal operators $e_{i}e_{i+1}f_{i}f_{i+1}$. The KR DEG $\mathcal{T}(R)$ can be equivalently generated by its vertex set $V$ and the crystal commutators $C_i$. In other words, for any two vertices $T$ and $T'$ in $V$, $T$ and $T'$ are connected by an edge of $\mathcal{T}(R)$ if and only if $\rw(T)\neq \rw(T')$ and the two reading words are connected by a crystal commutator $C_i$ for some $i$.
\end{thm}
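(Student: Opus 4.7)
The plan is to reduce the theorem to Assaf's classical DEG--crystal equivalence by exploiting the cyclic symmetry of KR crystals provided by promotion. The driving identity is the conjugation relation
\[
\pr \cdot C_i \cdot \pr^{-1} = C_{i \pm 1} \quad \text{(indices taken modulo $n$),}
\]
an immediate consequence of Shimozono's formulas $e_{i+1} = \pr^{-1} e_i \pr$ and $f_{i+1} = \pr^{-1} f_i \pr$. Combined with the observation that $\pr$ preserves the $0$-weight space and cyclically shifts descent sets (built into Definition \ref{descentset} via the clause ``$n \in D(T)$ iff $1 \in D(\pr(T))$''), this allows one to reduce the action of any $C_i$ on a $0$-weight element to the action of some $C_j$ at a classical index on a promoted tableau.

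For the classical edges $t_i$ with $1 \leq i \leq n-2$, all four operators $e_i, e_{i+1}, f_i, f_{i+1}$ appearing in $C_i$ are ordinary type A operators on the underlying crystal of $B^R$. Assaf's theorem, applied to the type A tensor product $B_{R_1} \otimes \cdots \otimes B_{R_k}$, then directly yields the equivalence: $t_i$ with incomparable descent sets is exactly the condition for $C_i$ (or its reversed composition $e_{i+1} e_i f_{i+1} f_i$) to act non-trivially on $\rw(T)$. For the remaining classical-swap edge $t_{n-1}$ and the cross-factor edge $t_n$, the plan is to apply the conjugation identity above: an edge $T \leftrightarrow T'$ of type $t_{n-1}$ corresponds under a single $\pr$ to a classical $t_{n-2}$-edge on $\pr(T), \pr(T')$, and likewise $t_n$ reduces via $\pr^2$ to a classical $t_{n-2}$-edge. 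The compatibility of promotion with these definitions follows from the jdt description of $\pr$ itself; in the case of $t_n$ this reflects precisely the fact that the defining cross-factor jdt move on $n$ and $1$ is what promotion does to entries $n$ and $1$ sitting in individual tensor factors.

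The most delicate cases are the exceptional edges $\bar{t}_1$ and $\bar{t}_{n-1}$, which arise precisely when the three entries $1, 2, n$ (respectively $1, n-1, n$) all collide in a single tensor factor, so that the naive cross-factor jdt defining $t_n$ (resp.\ $t_{n-1}$) is unavailable. Here the plan is to check directly that the multi-step jdt sequence prescribed in parts (3) and (4) of Definition \ref{edgeset} computes exactly the action of $C_1 = e_1 e_2 f_1 f_2$ (respectively $C_{n-1} = e_{n-1} e_n f_{n-1} f_n$) on $\rw(T)$. After using the conjugation identity to rewrite $C_1$ (resp.\ $C_{n-1}$) as a classical commutator acting on $\pr^{-1}(T)$, this reduces to tracking how the bracket matching for two consecutive classical crystal operators interacts inside a single rectangle containing the three relevant entries. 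I expect this step to be the main technical obstacle of the proof: the jdt prescriptions in (3) and (4) are multi-step and tailored, so matching them against the explicit action of $e_1 e_2 f_1 f_2$ (or $e_{n-1} e_n f_{n-1} f_n$) requires a careful case analysis on the mutual positions of the three colliding entries within their shared tensor factor.

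Finally, the converse direction follows by reversing each of the reductions above: non-triviality of $C_i$ on $\rw(T)$ forces an unmatched bracket configuration in the relevant colors, and after conjugation by a suitable power of $\pr$ we land at the hypothesis of Assaf's theorem at a classical index. The resulting classical edge, transferred back through promotion, is precisely one of the four edge types in Definition \ref{edgeset}, according to whether the entries governing the bracket are distributed across different tensor factors (giving $t_i$ or $t_n$) or collide in a single one (giving $\bar{t}_1$ or $\bar{t}_{n-1}$).
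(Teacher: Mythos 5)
Your strategy---conjugate every affine commutator down to a classical one via $\pr^{-1} C_i \pr = C_{i+1}$ and then invoke Assaf's theorem componentwise on the type $A$ decomposition of $B^{R_1}\otimes\cdots\otimes B^{R_k}$---is a genuinely different route from the paper, which instead computes the action of each commutator directly on reading words (using $e_n = \pr^{-1}e_1\pr$ only as a local computational device) in a case analysis organized by which of $\{n,1,2\}$ or $\{n-1,n,1\}$ share a tensor factor. Your route is conceptually cleaner, but it has a real gap: the entire reduction rests on the claim that the \emph{combinatorially defined} edges are intertwined by promotion with an index shift, i.e.\ that $t_n$ agrees with $\pr^{-2}\circ t_{2}\circ \pr^{2}$ (or equivalent), and that $\bar t_1,\bar t_{n-1}$ are what the conjugated classical swaps become when the three relevant entries collide in one factor. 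That equivariance is exactly where all the content of the theorem lives, and you only assert it (``follows from the jdt description of $\pr$ itself''; ``I expect this step to be the main technical obstacle''). Note also that you cannot appeal to Proposition \ref{p:pr_automorphism} here, since in the paper that proposition is deduced \emph{from} Theorem \ref{t:krdeg_crystal}; you would need an independent, purely combinatorial verification that the jdt prescriptions in Definition \ref{edgeset}(2)--(4) commute with promotion in the required sense, which amounts to the same case analysis the paper performs.

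Two smaller issues. First, your case partition leaks: a $t_1$ edge whose descent sets differ at $\{1,n\}$ (rather than $\{1,2\}$) is witnessed only by the affine commutator $C_n$, so it is not covered by your ``classical $t_i$, $1\le i\le n-2$, via Assaf'' bucket, nor is it named in your $t_{n-1}/t_n$ bucket; the paper's sub-cases (2a)--(2c) handle precisely this. Your conjugation machinery would cover it once the equivariance is in place, but as written it is skipped. Second, the converse direction needs more than ``reversing the reductions'': one must rule out that an affine commutator acting nontrivially on $\rw(T)$ produces something other than a defined edge, which the paper does by checking all six relative orderings of the three relevant letters; under your approach this again reduces to the unproven equivariance statement.
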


\begin{proof}
Suppose that $T$ and $T'$ are two vertices of $V$ connected by an edge ($t_i$, $\bar{t}_1$, or $\bar{t}_{n-1}$) in $\cal{T}(R)$. Note that $T'$ is obtained from $T$ by swapping $i$ and $i+1$ or conducting certain $\jdt$ moves. In either of the cases specified in Definition \ref{edgeset}, we can assume that, without loss of generality, $i\notin D(T)$ and $i\in D(T')$. Since $D(T)$ and $D(T')$ are not comparable with respect to inclusion, we have either $i-1\in D(T), i-1\notin D(T')$ or $i+1\in D(T), i+1\notin D(T')$. It suffices to consider the first case, since the other follows immediately after reindexing and switching the roles of $T$ and $T'$.
\begin{enumerate}
       \item If $2\leq i\leq n-1$, then the relative positions of $i-1, i, i+1$ in the reading words will be $\rw(T)=(\cdots i\cdots i-1\cdots i+1\cdots)$ and $\rw(T')=(\cdots i+1\cdots i-1\cdots i\cdots)$. We find $C_{i-1}(T)$ as
\begin{align*}
 &e_{i-1}e_{i}f_{i-1}f_{i}(\cdots i\cdots i-1\cdots i+1\cdots)\\ 
            &=e_{i-1}e_{i}f_{i-1}(\cdots i+1\cdots i-1\cdots i+1\cdots)\\
            &=e_{i-1}e_{i}(\cdots i+1\cdots i\cdots i+1\cdots) \\
            &=e_{i-1}(\cdots i+1\cdots i\cdots i\cdots)\\
            &=(\cdots i+1\cdots i-1\cdots i\cdots).
\end{align*}  
We obtain exactly $\rw(T')$ with all other letters fixed, and $i, i+1$ are swapped. Thus $T$ and $T'$ are connected by the crystal commutator $C_{i-1}=e_{i-1}e_{i}f_{i-1}f_{i}$.  
        \item If $i=1$, then $i -1 = n$, so we have $1\notin D(T), n\in D(T)$ and $1\in D(T'), n\notin D(T')$. By definition, this means that $1\in D(\pr(T))$ and $1\notin D(\pr(T'))$. We discuss four sub-cases below. The sub-cases (b1)-(b3) correspond to the usual edge $t_1$ where we simply swap $1$ and $2$ for $T$ and $T'.$ The sub-case (b4) corresponds to the edge $\bar{t}_1$.
        
        $\bf{(2a)}$ If $1$ and $n$ are in distinct tensor factors and $n$ appears to the left of $2$ in the reading word of $T$. It follows that $\rw(T)=(\cdots 1\cdots n\cdots 2\cdots)$ and $\rw(T')=(\cdots 2\cdots n\cdots 1\cdots)$. After promotion, we have $\rw(\pr(T'))=(\cdots 3\cdots 1\cdots 2\cdots)$ since $1\notin D(\pr(T'))$. Further, swapping $1$ and $2$ will not affect the path of $\jdt_n$ so that $\rw(\pr(T))=(\cdots 2\cdots 1\cdots 3\cdots)$. We use different colors on the reading words to represent distinct tensor factors. Note that the computation would be identical regardless of whether $2$ and $n$ are in the same tensor factor. We claim that $\rw(T)$ and $\rw(T')$ are connected by the crystal commutator $C_n=e_{n}e_{1}f_{n}f_{1}$, since 
        \begin{align*}
            &e_{n}e_{1}f_{n}f_{1}(\cdots \gr{1}\cdots \bl{n}\cdots 2\cdots) \\
            &=e_{n}e_{1}(\pr^{-1}f_{1}\pr)(\cdots \gr{2}\cdots \bl{n}\cdots 2\cdots)\\
            &=e_{n}e_{1}(\pr^{-1}f_{1})(\cdots \gr{3}\cdots \bl{1}\cdots 3\cdots)\\
            &=e_{n}e_{1}(\cdots \gr{2}\cdots \bl{1}\cdots 2\cdots) \\
            &=\pr^{-1}e_{1}\pr(\cdots \gr{2}\cdots \bl{1}\cdots 1\cdots)\\
            &=\pr^{-1}(\cdots \gr{3}\cdots \bl{1}\cdots 2\cdots)\\
            &=(\cdots \gr{2}\cdots \bl{n}\cdots 1\cdots).
        \end{align*}

        We make a clarification here regarding the above computation, which also applies to all cases discussed below. Although the promotion operator changes the reading words we choose not to show it for clarity, as the inverse promotion will move all these entries back to their original position.

        $\bf{(2b)}$ If $1$ and $n$ are in distinct tensor factors and $n$ appears to the right of $2$ in the reading word of $T$ (i.e., $2$ and $n$ belong to the same tensor factor consisting of a single row), then $\rw(T)=(\cdots 1\cdots 2\cdots n\cdots)$ and $\rw(T')=(\cdots 2\cdots 1\cdots n\cdots)$. Thus we find $C_n(T)$ as
        \begin{align*}
            &e_{n}e_{1}f_{n}f_{1}(\cdots \gr{1}\cdots\bl{2}\cdots \bl{n}\cdots)\\
            &=e_{n}e_{1}(\pr^{-1}f_{1}\pr)(\cdots \gr{2}\cdots\bl{2}\cdots \bl{n}\cdots)\\
            &=\pr^{-1}e_{1}\pr(\cdots \gr{2}\cdots\bl{1\,\,1}\cdots)\\
            &=(\cdots \gr{2}\cdots\bl{1}\cdots \bl{n}\cdots) = \rw(T').
        \end{align*}
        
        $\bf{(2c)}$ If $1$ and $n$ are in the same tensor factor with $2$ being in a different factor, then $\rw(T)=(\cdots n\cdots 1\cdots 2\cdots)$ and $\rw(T')=(\cdots n\cdots 2\cdots 1\cdots)$. Since $1\in D(\pr (T))$, we also have $\rw(\pr(T))=(\cdots 2\cdots 1\cdots 3\cdots)$ and $\rw(\pr(T'))=(\cdots 3\cdots 1\cdots 2\cdots)$. Thus
        \begin{align*}
            &e_{n}e_{1}f_{n}f_{1}(\cdots \gr{n}\cdots \gr{1}\cdots \bl{2}\cdots)\\ 
            &=e_{n}e_{1}(\pr^{-1}f_{1}\pr)(\cdots \gr{n}\cdots \gr{2}\cdots \bl{2}\cdots)\\
            &=\pr^{-1}e_{1}\pr(\cdots \gr{2}\cdots \gr{1}\cdots \bl{1}\cdots)\\
            &=(\cdots \gr{n}\cdots \gr{2}\cdots \bl{1}\cdots) = \rw(T').
        \end{align*}

        $\bf{(2d)}$ If $1,2$ and $n$ are all in the same tensor factor, one can verify that we have exactly the same computation as in $\bf{(b3)}$ with $1$ and $2$ being adjacent in $\rw(T)=(\cdots n\cdots 1\,\,2\cdots)$. Moreover, by applying $C_n(T)$, we essentially conduct the same sequence of moves on $T$ as in Definition \ref{edgeset}(3) to derive $T'$.
        
  \item  If $i=n,$ we have $n\notin D(T), n-1\in D(T)$ and $n\in D(T'), n-1\notin D(T')$. By definition, $1\notin D(\pr(T))$ and $1\in D(\pr(T'))$. Again, We discuss four sub-cases below. The sub-cases (c1)-(c3) correspond to the edge $t_n$ for which we apply $\jdt$ moves described in Definition \ref{edgeset}(2). The sub-case (c4) corresponds to the edge $\bar{t}_{n-1}$.
  
     $\bf{(3a)}$ If $n$ and $n-1$ are in distinct tensor factors and $n-1$ appears to the left of $1$ in the reading word of $T$, then $\rw(T)=(\cdots n\cdots n-1\cdots 1\cdots)$ and $\rw(T')=(\cdots 1\cdots n-1\cdots n\cdots)$. Similarly, we have $\rw(\pr(T))=(\cdots 1\cdots n\cdots 2\cdots)$ and $\rw(\pr(T'))=(\cdots 2\cdots n\cdots 1\cdots)$. Note that the computation would be identical regardless of $1$ and $n-1$ belonging to the same or distinct tensor factors. Now $\rw(T)$ and $\rw(T')$ are connected by $C_{n-1}(T)$ as
         \begin{align*}
            &e_{n-1}e_{n}f_{n-1}(\pr^{-1}f_{1}\pr)(\cdots \gr{n}\cdots \bl{n-1}\cdots 1\cdots)\\
            &=e_{n-1}(\pr^{-1}e_{1}\pr)(\cdots \gr{1}\cdots \bl{n}\cdots 1\cdots) \\
            &=e_{n-1}\pr^{-1}(\cdots \gr{2}\cdots \bl{1}\cdots 1\cdots) \\
            &=e_{n-1}(\cdots \gr{1}\cdots \bl{n}\cdots n\cdots)\\
            &=(\cdots \gr{1}\cdots \bl{n-1}\cdots n\cdots) = \rw(T').
        \end{align*}
                
       $\bf{(3b)}$ If $n$ and $n-1$ are in distinct tensor factors and $n-1$ appears to the right of $1$ in $T$ (i.e., $1$ and $n-1$ belong to the same tensor factor consisting of a single row), then we have $\rw(T)=(\cdots n\cdots 1\cdots n-1\cdots)$ and $\rw(T')=(\cdots 1\cdots n-1\,\,n\cdots)$, and  
       \begin{align*}
            &e_{n-1}e_{n}f_{n-1}f_{n}(\cdots \gr{n}\cdots \bl{1}\cdots \bl{n-1}\cdots)\\
            &=e_{n-1}e_{n}(\cdots \gr{1}\cdots \bl{1}\cdots \bl{n}\cdots) \\
            &=e_{n-1}(\cdots \gr{1}\cdots \bl{n\,\,n}\cdots)\\
            &=(\cdots \gr{1}\cdots \bl{n-1\,\,n}\cdots).
        \end{align*}
        
      $\bf{(3c)}$ If $n$ and $n-1$ are in the same tensor factor with $1$ being in a different factor, then $\rw(T)=(\cdots n\cdots n-1\cdots 1\cdots)$ and $\rw(T')=(\cdots n-1\cdots 1\cdots n\cdots)$. After promotion we have $\rw(\pr(T))=(\cdots n\cdots 1\cdots 2\cdots)$ and $\rw(\pr(T'))=(\cdots n\cdots 2\cdots 1\cdots)$. Thus
        \begin{align*} 
            &e_{n-1}e_{n}f_{n-1}f_{n}(\cdots \gr{n}\cdots \gr{n-1}\cdots \bl{1}\cdots)\\
            &=e_{n-1}e_{n}(\cdots \gr{n}\cdots\gr{1}\cdots \bl{1}\cdots) \\
            &=e_{n-1}(\cdots \gr{n}\cdots \gr{1}\cdots \bl{n}\cdots)\\
            &=(\cdots \gr{n-1}\cdots \gr{1}\cdots \bl{n}\cdots).
        \end{align*}
        $\bf{(3d)}$ If $1,n-1$ and $n$ are all in the same tensor factor, one can compare that applying $C_{n-1}(T)$, and conduct the sequence of moves on $T$ as in Definition \ref{edgeset}(4) are essentially the same operations to derive $T'$.
        \end{enumerate}
          
        

Now suppose that $\rw(T)\neq \rw(T')$ and $C_{i-1}(T)=T'$ for some $i\in [n]$. We need to check all six relative positions of $i-1, i$ and $i+1$ in $\rw(T)$. For $2\leq i\leq n-1$, after applying the crystal commutator $C_{i-1}$, the relative positions of $i-1, i$ and $i+1$ remain the same in all possibilities except for two cases: First, $e_{i-1}e_{i}f_{i-1}f_{i}(\cdots i\cdots i-1\cdots i+1\cdots)=(\cdots i+1\cdots i-1\cdots i\cdots)$, in which case we exchange $i$ and $i+1$; Second, $e_{i-1}e_{i}f_{i-1}f_{i}(\cdots i\cdots i+1\cdots i-1\cdots)=(\cdots i-1\cdots i+1\cdots i\cdots)$, in which case we exchange $i$ and $i-1$. For $i=1$ (resp. $i=n$), we use the same approach to check all relative positions of $n,1,2$ (resp. $n-1,n,1$) in $\rw(T)$. A brute-force computation with applying $e_{n}e_{1}f_{n}f_{1}$ (resp. $e_{n-1}e_{n}f_{n-1}f_{n}$) on $\rw(T)$ yields either $\rw(T)$ itself, or $\rw(T')$ for some $T'$ connected with $T$ in $\mathcal{T}(R)$, or some reading word that is not from a valid vertex of $\mathcal{T}(R)$. 
\end{proof}

In Figure \ref{KR DEG}, we illustrate all the equivalent edges obtained by the crystal commutators $C_i$ in orange. 
\begin{prop} \label{p:pr_automorphism}
    The map $\pr$ is a graph automorphism of $\cal{T}(R)$, with the additional property that \\$D(\pr(T)) = \{i + 1 \mid i \in D(T)\}$, where addition is taken in $\ZZ/n\ZZ$.
\end{prop}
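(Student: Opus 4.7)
The plan is to reduce both parts of the proposition to the cyclic intertwining
\[
\pr \circ e_i = e_{i+1} \circ \pr \qquad (i \in \ZZ/n\ZZ),
\]
between promotion and the affine crystal operators; specializing this identity at $i = n$ recovers the affine-node formula $e_n = \pr^{-1} \circ e_1 \circ \pr$ recorded in the introduction. That $\pr$ preserves the vertex set of $\cal{T}(R)$ is immediate, as the weight vector is cyclically shifted by $\pr$ and the $0$-weight locus is stable under this shift.

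For the descent identity I would first establish the uniform crystal-theoretic characterization
\[
i \in D(T) \iff \epsilon_i(T) < m \qquad (i \in \ZZ/n\ZZ),
\]
where $m$ is the common weight multiplicity of any $0$-weight element. For $i \in [n-1]$ this is immediate from the bracket matching defining $e_i$: in the $\{i,i+1\}$-subword of $\rw(T)$ there are exactly $m - \epsilon_i(T)$ matched pairs, and $i \in D(T)$ says precisely that at least one such pair exists. For $i = n$, the affine formula combined with bijectivity of $\pr$ gives $\epsilon_n(T) = \epsilon_1(\pr(T))$; together with the recursive definition $n \in D(T) \iff 1 \in D(\pr(T))$ and the $i = 1$ instance of the characterization, this closes the case $i = n$. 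The intertwining then immediately yields $\epsilon_{i+1}(\pr(T)) = \epsilon_i(T)$ for every $i$, and chaining the equivalences gives
\[
i+1 \in D(\pr(T)) \iff \epsilon_{i+1}(\pr(T)) < m \iff \epsilon_i(T) < m \iff i \in D(T),
\]
which is the claimed cyclic shift $D(\pr(T)) = \{i+1 \bmod n : i \in D(T)\}$.

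For the graph-automorphism conclusion, the intertwining iterates to $\pr \circ C_i = C_{i+1} \circ \pr$ for the crystal commutators $C_i = e_i e_{i+1} f_i f_{i+1}$. By Theorem \ref{t:krdeg_crystal}, an edge between $T$ and $T'$ in $\cal{T}(R)$ is recorded by some $T' = C_i(T)$ with $\rw(T) \neq \rw(T')$; hence $\pr(T') = C_{i+1}(\pr(T))$ supplies an edge $\pr(T) \sim \pr(T')$, the reading words remaining distinct because $\pr$ is a bijection and vertices of fixed underlying shape $R$ are separated by their reading words. Running the same argument with $\pr^{-1}$ shows $\pr$ carries edges to edges in both directions, so it is a graph automorphism. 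The main delicate point I anticipate is closing the uniform descent characterization at $i = n$, since the definition of $D$ at the affine node already invokes $\pr$; once the base-case identity $\epsilon_n(T) = \epsilon_1(\pr(T))$ is checked to plug consistently into this recursion, the remainder of the proof is a formal consequence of the intertwining together with Theorem \ref{t:krdeg_crystal}.
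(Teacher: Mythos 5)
Your proof is correct, and while it rests on the same core ingredient as the paper's argument --- the intertwining of promotion with the crystal operators, combined with Theorem \ref{t:krdeg_crystal} to transport edges --- your treatment of the descent shift takes a genuinely different and in one respect more careful route. The paper disposes of the descent statement by noting that for $i=n$ it holds by construction and that for $i\in[n-1]$ jeu de taquin slides preserve descents; this is quick but leaves the boundary case $i=n-1$ implicit, since the target condition $n\in D(\pr(T))$ is itself defined through a further application of $\pr$ rather than read off the word. Your uniform characterization $i\in D(T)\iff \epsilon_i(T)<m$ for weight-zero vertices treats all residues on an equal footing and closes that case automatically: the bracket count is right (the $\{i,i+1\}$-subword has $m-\epsilon_i(T)$ matched pairs, and the reduced word has the form $)^a(^b$, so having no matched pair is equivalent to every $i$ preceding every $i+1$, i.e.\ $i\notin D(T)$), and the affine case follows from $\epsilon_n(T)=\epsilon_1(\pr(T))$ without circularity. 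One further point in your favor: you use the intertwining in the form $\pr\circ e_i=e_{i+1}\circ\pr$, which is the version that specializes at $i=n$ to the affine-node formula $e_n=\pr^{-1}\circ e_1\circ\pr$ and is the one consistent with the promotion convention of Definition \ref{promotion}; the displayed general relation in Section \ref{sec:background} conjugates in the opposite direction, and your form is the one actually needed both for the edge-transport step and for the identity $\epsilon_{i+1}(\pr(T))=\epsilon_i(T)$. The cost of your approach is only that it routes an elementary combinatorial fact (jeu de taquin preserves descents) through crystal-theoretic bookkeeping; the benefit is a uniform argument with no unexamined edge cases.
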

\begin{proof}
First, since the crystal operators $e_i$  satisfy $e_{i+1} = \pr^{-1} \circ\ e_i \circ \pr$ (and similarly for $f_i$) \cite{shimozono02}, 
it is clear that the edges of $\cal{T}(R)$ (which are compositions of crystal operators) are preserved under promotion. Second, for $i = n$ the statement about descent sets is true by construction. For other $i$, it is well known that jeu de taquin slides do not change descents, so $i \in D(T)$ if and only if $i+1 \in D(\pr(T))$. 
\end{proof}

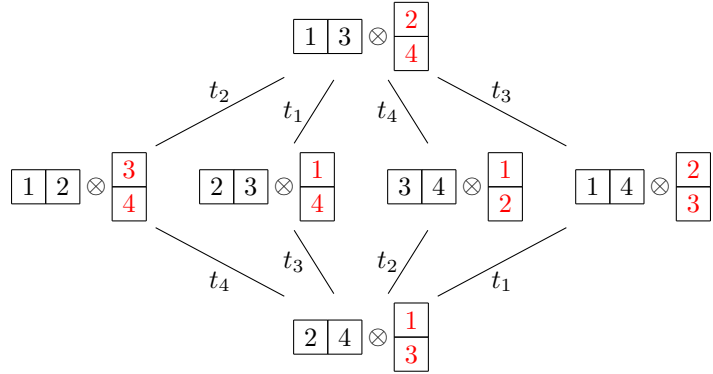
\begin{figure}[htb] \centering 
\begin{tikzpicture}
    \node(1) at (0,0) {$\tableau{1&2} \otimes \tableau{\red{3}\\\red{4}}$};
    \node(2) at (2.5,0) {$\tableau{2&3} \otimes \tableau{\red{1}\\\red{4}}$};
    \node(3) at (5,0) {$\tableau{3&4} \otimes \tableau{\red{1}\\\red{2}}$};
    \node(4) at (7.5,0) {$\tableau{1&4} \otimes \tableau{\red{2}\\\red{3}}$};
    \node(5) at (3.75, 2) {$\tableau{1&3} \otimes \tableau{\red{2}\\\red{4}}$};
    \node(6) at (3.75, -2) {$\tableau{2&4} \otimes \tableau{\red{1}\\\red{3}}$};

    \draw (1) -- (5) node[above, midway] {$t_2$};
    \draw (2) -- (5) node[left, midway] {$t_1$};
    \draw (3) -- (5) node[left, midway] {$t_4$};
    \draw (4) -- (5) node[above, midway] {$t_3$};
    \draw (1) -- (6) node[below, midway] {$t_4$};
    \draw (2) -- (6) node[left, midway] {$t_3$};
    \draw (3) -- (6) node[left, midway] {$t_2$};
    \draw (4) -- (6) node[below, midway] {$t_1$};

\end{tikzpicture}
\caption{The KR DEG $\cal{T}((2),(1,1))$. \label{f:krdeg_ex2}}
\end{figure}

\begin{example}
Let $R = ((2), (1,1))$. Then $\cal{T}(R)$ can be seen in Figure \ref{f:krdeg_ex2}. Promotion acts on $\cal{T}(R)$ by cycling the middle row once to the right, and exchanging the top and bottom vertices. One can confirm that the descent sets also get cycled appropriately. 
\end{example}

\begin{remark} \label{KLDEG}
In \cite{chmutovlewispylyavskyy23}, Chmutov, Lewis, and Pylyavskyy described the Knuth equivalence classes of affine permutations via the \emph{Affine Matrix Ball Construction} (AMBC) algorithm\cite{chmutovpylyavskyyyudovina18}, whose image gives a pair of tabloids $P$ and $Q$. The Knuth moves on permutations preserve their $P$ tabloids and determine the Knuth moves on their $Q$ tabloids. A modified charge statistic determines the equivalence classes of these $Q$ tabloids, and the induced graph is called the Kazhdan-Lusztig dual equivalence graph (KL DEG). If $R = (R_1, \ldots, R_k)$ with $R_i = (s_i)$, i.e., $R$ is a sequence of single rows, our KR DEG $\mathcal{T}(R)$ reduces to the KL DEG, whose vertex set consists of all $Q$ tabloids of shape $(s_k,\cdots,s_1)$.
\end{remark}

\section{Connected components of KR DEGs} \label{sec:krdegccs}

In this section, we determine the number of connected components of the KR DEG $\cal{T}(R)$ (Theorem \ref{t:connectedcomponents}). This result can be seen as a generalization of \cite[Theorem 8.6]{chmutovpylyavskyyyudovina18} which characterizes the connected components of KL DEGs. 

\begin{thm} \label{t:connectedcomponents}
Let $R = (R_1, \ldots, R_k)$ be a sequence of rectangular partitions. Suppose $R_i$ appears in $R$ with multiplicity $m_i$, and let $d_R = \gcd(m_1,\ldots, m_k)$. Then $\cal{T}(R)$ has $d_R$ connected components, each consisting of all vertices $T$ with a given charge modulo $d_R$.
\end{thm}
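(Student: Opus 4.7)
The plan is to prove the theorem in two parts: (I) the function $\charge \bmod d_R$ is constant on each connected component of $\cal{T}(R)$, and (II) every level set of this function is itself a single connected component. Taken together these two assertions give exactly the claimed description of the $d_R$ connected components.

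For (I), by Theorem \ref{t:krdeg_crystal} every edge of $\cal{T}(R)$ is realized by a crystal commutator $C_j = e_j e_{j+1} f_j f_{j+1}$, with $j$ taken cyclically in $[n]$. When $j \notin \{n-1, n\}$ the commutator uses only the classical crystal operators $e_i, f_i$ with $i \in [n-1]$; because these operators commute with the combinatorial R-matrix and with row insertion, they preserve $\charge$ exactly, so such edges do not change the charge at all. For $j \in \{n-1, n\}$ I would first extend Proposition \ref{p:e_ncharge} from two copies of the same rectangle to an arbitrary tensor product $B^{R_1} \otimes \cdots \otimes B^{R_k}$, quantifying $\charge(e_n(T)) - \charge(T)$ by which tensor factor $e_n$ acts on and by the shapes of the adjacent rectangles. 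Tracking these increments across the four steps inside $C_j$ and summing, one shows that the net change in charge along any such edge is always divisible by $d_R$, essentially because the possible increments are indexed by the distinct rectangle shapes while their total contribution is forced by the multiplicities $m_i$.

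For (II), I would argue by induction, reducing to the case of a single rectangle repeated $k$ times. In that base case $d_R = k$, promotion (Proposition \ref{p:pr_automorphism}) cyclically permutes the $k$ charge classes, and the affine edges $\bar{t}_1, \bar{t}_{n-1}$ together with the classical dual-equivalence edges $t_1, \ldots, t_{n-1}$ suffice to connect each individual class; this parallels and generalizes the KL DEG case of \cite{chmutovpylyavskyyyudovina18}, which handles single-row rectangles. The inductive step handles a general $R$ by using R-matrix isomorphisms to regroup identical rectangles and then pivoting between groups using the affine moves, showing that the contribution to charge from interactions between distinct rectangle shapes can be absorbed by classical moves (which live entirely inside the type-$A$ dual-equivalence structure of the underlying skew shape).

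The main obstacle lies in part (II): the affine edges $\bar{t}_1$ and $\bar{t}_{n-1}$ are only available when the entries $1,2,n$ (respectively $1,n-1,n$) all lie in a single tensor factor in a very restrictive local configuration, so producing a path between two arbitrary vertices of the same charge modulo $d_R$ requires finding intermediaries where these local configurations occur. Overcoming this will require combining classical dual-equivalence moves (to reposition entries within the skew shape), promotion (to cycle the roles of small and large entries), and R-matrix swaps (to move entries between tensor factors), all while preserving the $0$-weight condition. A clean bookkeeping device — perhaps a canonical form in each charge class — will be essential to organize these moves and to verify that the target of the induction is reached without losing track of charge modulo $d_R$.
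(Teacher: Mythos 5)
Your part (I) is essentially the right idea and matches the paper's strategy: classical commutators preserve charge exactly, and each affine commutator $C_{n-1}$ or $C_n$ contains exactly one $e_n$ and one $f_n$, whose charge increments cancel modulo $d_R$. The paper makes this bookkeeping tractable by introducing a \emph{semicharge} statistic (a weighted sum of local charges between adjacent tensor factors only), proving it agrees with charge mod $d_R$ once identical rectangles are grouped by $R$-matrices, and then showing $e_n$ and $f_n$ shift semicharge by exactly $\pm 1$ mod $d_R$ no matter which factor they act on. Your plan to ``extend Proposition \ref{p:e_ncharge} to an arbitrary tensor product'' for the full charge is harder than you suggest, since the global charge involves $R$-matrix images between all pairs of factors, but the direction is sound.

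Part (II) contains a genuine gap. The proposed induction is not workable as stated: the base case (a single rectangle repeated $k$ times) is precisely the hard part of the theorem, and you assert without argument that the available edges ``suffice to connect each individual class''; the inductive step (``regroup identical rectangles and pivot between groups using the affine moves'') does not reduce the problem, because the $0$-weight space of a larger tensor product is not assembled from $0$-weight spaces of smaller ones in any way that transfers connectivity. The missing idea is to exploit that promotion is a graph automorphism (Proposition \ref{p:pr_automorphism}): this reduces the entire upper bound to showing that one canonical vertex $\row(R)$ is connected to $\pr^{d_R}(\row(R))$. The paper achieves this in two steps that your sketch does not supply. First, for a family of canonical fillings $\row_m(R)$ it exhibits explicit paths (long sequences of $t_i$-swaps, Proposition \ref{monster}) from $\row_m(R)$ to $\pr^{n-d}(\row_m(R))$ for every $d$ in a set $A$ of column/row statistics, and shows $\gcd(A)=d_R$ by a counting argument on cell multiplicities (Lemma \ref{l:enoughprpowers}). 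Second, a row-combing algorithm (generalizing Shi's combing) moves \emph{any} vertex to $\pr^j(\row(R))$ for some $j$ using only edges and powers of promotion. Concatenating these paths and conjugating by powers of $\pr$ yields connectivity of each charge class. Without some substitute for these two constructions, your ``canonical form'' remains a placeholder and the upper bound $d_R$ on the number of components is not established.
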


\begin{example}
Let $R = ((1^3), (1^3))$. Then $R$ consists of a single shape with multiplicity $2$. Figure \ref{KRDEG2} shows how $\cal{T}(R)$ splits up into two connected components. The left (resp. right) connected component consists of vertices with even (resp. odd) charge.
\end{example}
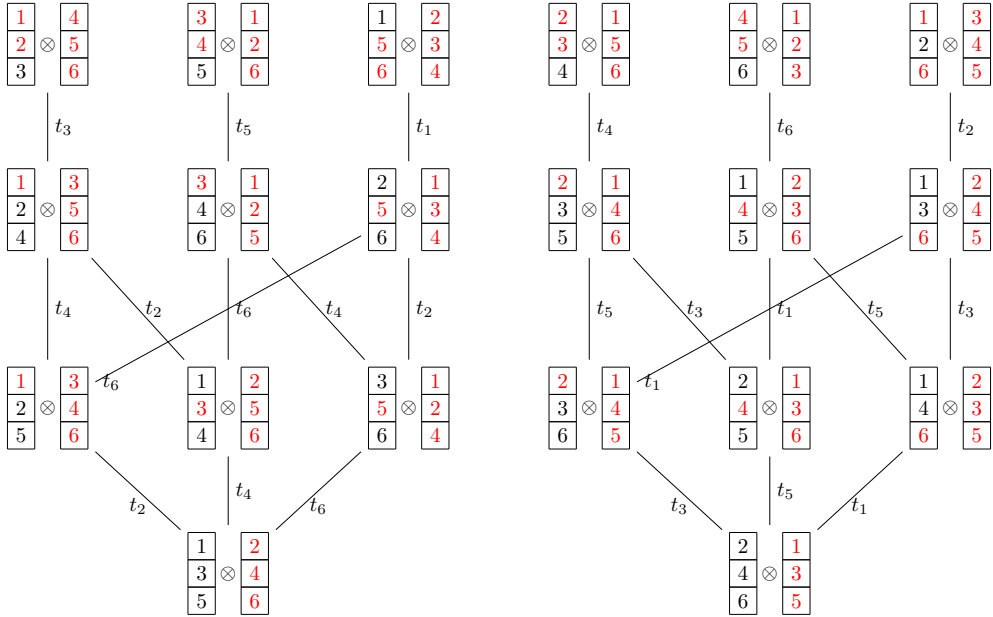
\begin{figure}[htb] \centering
		\scalebox{0.8}{			
			\begin{tikzpicture}
			\newcommand*{\xdist}{*3}
			\newcommand*{\ydist}{*2.2} 
\node (n1) at (-1.5\xdist,0\ydist) 	
            {$\tableau{1\\3\\5}\otimes \tableau{\red{2}\\\red{4}\\\red{6}}$};
\node (n2) at (-2.5\xdist,1.25\ydist) 			
            {$\tableau{\red{1}\\2\\5}\otimes \tableau{\red{3}\\\red{4}\\\red{6}}$};
\node (n3) at (-1.5\xdist,1.25\ydist) 			
            {$\tableau{1\\\red{3}\\4}\otimes \tableau{\red{2}\\\red{5}\\\red{6}}$};
\node (n4) at (-.5\xdist,1.25\ydist) 			
            {$\tableau{3\\\red{5}\\6}\otimes \tableau{\red{1}\\\red{2}\\\red{4}}$};
\node (n5) at (-2.5\xdist,2.75\ydist) 			
            {$\tableau{\red{1}\\2\\4}\otimes \tableau{\red{3}\\\red{5}\\\red{6}}$};
\node (n6) at (-1.5\xdist,2.75\ydist) 			
            {$\tableau{\red{3}\\4\\6}\otimes \tableau{\red{1}\\\red{2}\\\red{5}}$};
\node (n7) at (-.5\xdist,2.75\ydist) 			
            {$\tableau{2\\\red{5}\\6}\otimes \tableau{\red{1}\\\red{3}\\\red{4}}$};
\node (n8) at (-2.5\xdist,4\ydist) 			               {$\tableau{\red{1}\\\red{2}\\3}\otimes
            \tableau{\red{4}\\\red{5}\\\red{6}}$};
\node (n9) at (-1.5\xdist,4\ydist) 			
            {$\tableau{\red{3}\\\red{4}\\5}\otimes \tableau{\red{1}\\\red{2}\\\red{6}}$};
\node (n10) at (-.5\xdist,4\ydist) 			
            {$\tableau{1\\\red{5}\\\red{6}}\otimes \tableau{\red{2}\\\red{3}\\\red{4}}$};
\draw [-] (n1)--(n2) node [below,midway]{$t_2$};
\draw [-] (n1)--(n3) node [right,midway]{$t_4$};
\draw [-] (n1)--(n4) node [below,midway]{$t_6$};
\draw [-] (n2)--(n5) node [right,midway]{$t_4$};
\draw [-] (n3)--(n6) node [right,midway]{$t_6$};
\draw [-] (n4)--(n7) node [right,midway]{$t_2$};
\draw [-] (n3)--(n5) node [right,midway]{$t_2$};
\draw [-] (n4)--(n6) node [right,midway]{$t_4$};
\draw [-] (n2)--(n7) node [right,at start]{$t_6$};
\draw [-] (n5)--(n8) node [right,midway]{$t_3$};
\draw [-] (n6)--(n9) node [right,midway]{$t_5$};
\draw [-] (n7)--(n10) node [right,midway]{$t_1$};
\quad
\node (n11) at (1.5\xdist,0\ydist) 	
            {$\tableau{2\\4\\6}\otimes \tableau{\red{1}\\\red{3}\\\red{5}}$};
\node (n12) at (.5\xdist,1.25\ydist) 			
            {$\tableau{\red{2}\\3\\6}\otimes \tableau{\red{1}\\\red{4}\\\red{5}}$};
\node (n13) at (1.5\xdist,1.25\ydist) 			
            {$\tableau{2\\\red{4}\\5}\otimes \tableau{\red{1}\\\red{3}\\\red{6}}$};
\node (n14) at (2.5\xdist,1.25\ydist) 			
            {$\tableau{1\\4\\\red{6}}\otimes \tableau{\red{2}\\\red{3}\\\red{5}}$};
\node (n15) at (.5\xdist,2.75\ydist) 			
            {$\tableau{\red{2}\\3\\5}\otimes \tableau{\red{1}\\\red{4}\\\red{6}}$};
\node (n16) at (1.5\xdist,2.75\ydist) 			
            {$\tableau{1\\\red{4}\\5}\otimes \tableau{\red{2}\\\red{3}\\\red{6}}$};
\node (n17) at (2.5\xdist,2.75\ydist) 			
            {$\tableau{1\\3\\\red{6}}\otimes \tableau{\red{2}\\\red{4}\\\red{5}}$};
\node (n18) at (.5\xdist,4\ydist) 			
            {$\tableau{\red{2}\\\red{3}\\4}\otimes \tableau{\red{1}\\\red{5}\\\red{6}}$};
\node (n19) at (1.5\xdist,4\ydist) 			
            {$\tableau{\red{4}\\\red{5}\\6}\otimes \tableau{\red{1}\\\red{2}\\\red{3}}$};
\node (n20) at (2.5\xdist,4\ydist) 			
            {$\tableau{\red{1}\\2\\\red{6}}\otimes \tableau{\red{3}\\\red{4}\\\red{5}}$};
\draw [-] (n11)--(n12) node [below,midway]{$t_3$};
\draw [-] (n11)--(n13) node [right,midway]{$t_5$};
\draw [-] (n11)--(n14) node [below,midway]{$t_1$};
\draw [-] (n12)--(n15) node [right,midway]{$t_5$};
\draw [-] (n13)--(n16) node [right,midway]{$t_1$};
\draw [-] (n14)--(n17) node [right,midway]{$t_3$};
\draw [-] (n13)--(n15) node [right,midway]{$t_3$};
\draw [-] (n14)--(n16) node [right,midway]{$t_5$};
\draw [-] (n12)--(n17) node [right,at start]{$t_1$};
\draw [-] (n15)--(n18) node [right,midway]{$t_4$};
\draw [-] (n16)--(n19) node [right,midway]{$t_6$};
\draw [-] (n17)--(n20) node [right,midway]{$t_2$};

			\end{tikzpicture}	
}			
\caption{The KR DEG $\mathcal{T}((1^3),(1^3))$ has two connected components. The elements of the descent set of each vertex are shown in red. The elements of the left and right components have charge congruent to $0$ and $1 \bmod 2$ respectively.}\label{KRDEG2}	
\end{figure}
It suffices to prove Theorem \ref{t:connectedcomponents} in the special case where $R = (R_1^{\otimes m_1}, \ldots, R_\ell^{\otimes m_\ell})$. Any $R$ where partitions of the same shape are not grouped together can be permuted to this form using combinatorial $R$-matrices without changing the structure of $\cal{T}(R)$.
The remainder of this section is split into two parts. One showing that $\cal{T}(R)$ has at most $d_R$ connected components, and one showing that it has at least $d_R$ connected components. To prove Theorem \ref{t:connectedcomponents}, we will define a modified version of the charge statistic (called \emph{semicharge} and denoted $\scharge$) which agrees with charge modulo $d_R$ in this special case, and use it to prove the following statements:
\begin{enumerate}[1.]
    \item For any $T, T'\in \mathcal{T}(R)$, if $T$ and $T'$ are connected by an edge, then $\scharge(T) \equiv \scharge(T') \bmod d_R$ (Proposition \ref{p:charge_on_edges}).
    \item For any $0 \leq i < d_{R}$, there exists $T\in \mathcal{T}(R)$ such that $\scharge(T) \equiv i \bmod d_{R}$ (Proposition \ref{p:pr_semicharge}).
    \item For any $T, T'\in \mathcal{T}(R)$ such that $\scharge(T) \equiv \scharge(T') \bmod d_R$, $T$ and $T'$ are connected in $\cal{T}(R)$ (Proposition \ref{monster} and Proposition \ref{p:rcomb}).
\end{enumerate}

\subsection{The lower bound of \texorpdfstring{$d_R$}{dR}} \label{subsec:lowerbound}
We start by defining the semicharge, and use it to prove the first two statements above.
\begin{defn} \label{semicharge}
Let $R = (R_1, \ldots, R_k)$, and $T \in B^{R}$. The \emph{semicharge} of $T$, denoted $\scharge(T)$, is given by the following:
\begin{enumerate}[1.]
  \item If $k = 1$, $\scharge(T) = 0$ for every $T \in B^R$.
  \item If $k = 2$, $\scharge(T) = \begin{cases}
    \charge(T) & R_1 = R_2 \\
    0 & R_1 \neq R_2 
  \end{cases}$.
  \item If $k > 2$, then $\scharge(T) = \sum_{i=1}^{k-1} i \cdot \scharge(T_i \otimes T_{i+1})$.
\end{enumerate}
\end{defn}
\begin{example} \label{ex:semicharge_example}
Let $R = ((3^3), (2^2),(2^2))$, and \[
T = \tableau{4&5&6\\ 9 & 13 & 14 \\ 10 & 16 & 17} \otimes \tableau{2 & 8 \\ 3 & 11} \otimes \tableau{1 & 7 \\ 12 & 15},
\]
as in Example \ref{ex:full_charge_ex}. To compute $\scharge(T)$, we see that $\scharge(T_1 \otimes T_2) = 0$ since $R_1 \neq R_2$, and 
\[
T_2 \leftarrow \rw(T_3) = \ytableaushort{{1}{7}{12}{15},
    {2}{8},
    {*(blue!30) 3}{*(blue!30)11}}
\]
so $\scharge(T) = 1\cdot 0 + 2\cdot 2 = 4$. 
\end{example}

Semicharge is similar to the normal charge statistic, but we ignore any contributions between rectangles of different shapes. This makes it easier to compute, since we only compute the local charge between adjacent tensor factors. A drawback to this simplification is that, unlike charge, semicharge is \emph{not} invariant under the application of combinatorial $R$-matrices to permute tensor factors. For instance, if $R = ((2^2), (3^3), (2^2))$, then $\scharge$ is identically $0$ on $B^R$, because no adjacent factors are the same shape. However, if all rectangles of the same shape are grouped together in $R$ (as we are assuming), then $\scharge$ and $\charge$ agree modulo $d_R$, as we show in the following lemma.

\begin{lemma} \label{l:charge_scharge}
Suppose $R = (R_1^{\otimes m_1},\ldots, R_\ell^{\otimes m_\ell})$, and $d_R = \gcd(m_1,\ldots, m_\ell)$. Then for any $T \in B^R$ we have  $\scharge(T) \equiv \charge(T) \bmod d_R$.

\end{lemma}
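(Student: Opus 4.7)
The plan is to decompose both $\charge(T)$ and $\scharge(T)$ according to the contiguous block structure of $R$ and show that every discrepancy between them is manifestly a multiple of $d_R$. Let $B_p = [b_p, b_p+m_p-1]$ denote the positions occupied by the block of shape $R_p$, so $b_p - 1 = m_1 + \cdots + m_{p-1}$ is divisible by $d_R$. The enabling observation is that the combinatorial R-matrix $\sigma:B^{R_p}\otimes B^{R_p}\to B^{R_p}\otimes B^{R_p}$ acts as the identity on elements. One way to see this: $V_{R_p}\otimes V_{R_p}$ is multiplicity-free as a $GL_n(\CC)$-representation for rectangular $R_p$, and a set-theoretic crystal automorphism of a multiplicity-free crystal is forced to be the identity on each isotypic component. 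Consequently, any composition $\sigma_a\circ\cdots\circ\sigma_b$ whose indices all correspond to intra-block adjacencies acts as the identity on $B^R$.

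With this in hand, I would decompose $\charge(T)=\sum_{i<j}\charge(T^{(i)}_{j-1}\otimes T_j)$ into within-block and cross-block parts. For $i<j$ both in $B_p$, all intervening R-matrices are intra-block, so $T^{(i)}_{j-1}=T_{j-1}$, and a brief reindexing gives
\[
\sum_{\substack{i<j\\ i,j\in B_p}}\charge(T_{j-1}\otimes T_j)=\sum_{i=b_p}^{b_p+m_p-2}(i-b_p+1)\cdot\charge(T_i\otimes T_{i+1}).
\]
Compared with the within-block contribution to $\scharge(T)$, namely $\sum_{i=b_p}^{b_p+m_p-2}i\cdot\charge(T_i\otimes T_{i+1})$, the difference is $(b_p-1)\sum_{i=b_p}^{b_p+m_p-2}\charge(T_i\otimes T_{i+1})$, which is a multiple of $d_R$.

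For cross-block pairs $i\in B_p$, $j\in B_q$ with $p<q$, I would factor
\[
\sigma_{j-2}\circ\cdots\circ\sigma_i \;=\; \bigl(\sigma_{j-2}\circ\cdots\circ\sigma_{b_p+m_p-1}\bigr)\circ\bigl(\sigma_{b_p+m_p-2}\circ\cdots\circ\sigma_i\bigr),
\]
where the right factor is a product of intra-block R-matrices and hence the identity. Thus $T^{(i)}_{j-1}$ depends only on $j$ (not on the particular choice of $i\in B_p$), and summing over $i\in B_p$ for fixed $j$ contributes $m_p$ copies of the same term. Hence the total cross-block contribution from the pair of blocks $(p,q)$ is divisible by $m_p$ and therefore by $d_R$.

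Combining the within-block and cross-block analyses yields $\charge(T)\equiv\scharge(T)\pmod{d_R}$. The main obstacle I anticipate is establishing cleanly, in the conventions of this paper, that the R-matrix acts as the identity on $B^{R_p}\otimes B^{R_p}$ — once this is granted, the rest is a bookkeeping exercise. A purely combinatorial verification via the jeu de taquin description of $\sigma$ would also work and may be preferred, since this identity property is the logical linchpin of the argument.
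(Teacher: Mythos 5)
Your proposal is correct and follows essentially the same route as the paper: both rest on the key fact that the combinatorial $R$-matrix is the identity on $B^{R_p}\otimes B^{R_p}$ (which the paper asserts without justification; your jeu-de-taquin argument is the cleanest way to see it, since $T_1\otimes T_2$ is already the unique skew tableau of the required shape in its jdt class), and both then split $\charge$ into within-block terms, whose coefficients differ from those of $\scharge$ by the multiple $b_p-1$ of $d_R$, and cross-block terms, which occur with multiplicity divisible by $m_p$. The only cosmetic difference is that the paper records the full multiplicity $m_pm_q$ for cross-block pairs, whereas you use only divisibility by $m_p$, which suffices.
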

\begin{proof}
Let $m_0 = 0$ and $\alpha_i =  m_1 + m_2 + \cdots + m_i$. Since combinatorial $R$-matrices act by the identity on rectangles of the same shape, the charge computation reduces to the following
\begin{align*}
\charge(T)=&\sum_{i<j}m_im_j\cdot\charge(T_{\alpha_i}^{(\alpha_{j-1})}\otimes T_{\alpha_{j-1}+1})\\
&+\sum_{i=0}^{\ell-1}\sum_{j=1}^{m_{i+1}-1}j\cdot \charge(T_{m_i+j}\otimes T_{m_i+j+1}),
\end{align*}
and
    \begin{align*}
\scharge(T)&=\sum_{j=1}^{k-1}j\cdot\scharge(T_j\otimes T_{j+1})\\
&=\sum_{i=0}^{\ell-1}\sum_{j=1}^{m_{i+1}-1}(\alpha_i +j)\cdot \scharge(T_{m_i+j}\otimes T_{m_i+j+1}).
\end{align*}
Since each $m_i$ (and therefore each $\alpha_i$) is a multiple of $d_R$, one can directly compare to see that $\scharge(T) \equiv \charge(T) \bmod d_R$.
\end{proof}

\begin{lemma} \label{l:chargechangeontensor}

Let $R$ be a sequence of rectangles, and $T = T_1 \otimes \cdots \otimes T_k \in B^R$. Then
\[
  \scharge(e_n(T)) = \scharge(T) +1 \bmod d_R,   
\]
and 
\[
  \scharge(f_n(T)) = \scharge(T) -1 \bmod d_R.  
\]
\end{lemma}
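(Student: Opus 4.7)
The plan is a direct analysis of how $\scharge$ changes under $e_n$, using the additive structure of $\scharge$ over adjacent pairs together with Proposition \ref{p:e_ncharge}. The statement for $f_n$ reduces to that for $e_n$ by applying the latter to $f_n(T)$ and using that $e_n$ and $f_n$ are mutually inverse where defined.

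By the tensor product rule, $e_n$ acts on exactly one factor $T_{j_0}$, which lies in some block $R^{(b)\otimes m_b}$ of the grouped form $R = (R_1^{\otimes m_1}, \ldots, R_\ell^{\otimes m_\ell})$ (we may assume this grouping by the reduction at the start of Section \ref{subsec:lowerbound}). Since $\scharge(T) = \sum_{i=1}^{k-1} i \cdot \scharge(T_i \otimes T_{i+1})$ and $\scharge(T_i \otimes T_{i+1}) = 0$ whenever $R_i \neq R_{i+1}$, only the two adjacent pairs containing $T_{j_0}$ can contribute to $\Delta\scharge$, and they contribute only when they lie inside block $b$. For such in-block pairs the same-shape combinatorial $R$-matrix is trivial, so the global action of $e_n$ on $T_{j_0}$ agrees with the pairwise action, and Proposition \ref{p:e_ncharge} applies: the pair $\scharge$ changes by $+1$ when $e_n$ acts on the left component and by $-1$ when it acts on the right.

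The remainder is casework on the position of $j_0$ in its block. When $j_0$ is strictly interior (requiring $m_b \geq 3$), both adjacent pairs are in-block, so $\Delta\scharge = (j_0 - 1)(-1) + j_0(+1) = 1$. When $j_0$ is at the left boundary of its block with $m_b \geq 2$, only $(T_{j_0}, T_{j_0+1})$ is in-block and contributes $\Delta\scharge = j_0$; since $j_0 - 1 = m_1 + \cdots + m_{b-1}$ is a multiple of $d_R$, this is $\equiv 1 \pmod{d_R}$. The right-boundary case is symmetric: $\Delta\scharge = 1 - j_0$, and $j_0 = m_1 + \cdots + m_b$ is divisible by $d_R$, so again $\equiv 1 \pmod{d_R}$. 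Finally, when $m_b = 1$, no adjacent pair is in-block and $\Delta\scharge = 0$, but then $d_R \mid m_b = 1$ forces $d_R = 1$ and the congruence is trivial. Edge cases $j_0 = 1$ or $j_0 = k$ are handled the same way, noting that the missing pair contributes $0$.

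The main obstacle is rigorously establishing the key compatibility: within a block, the global action of $e_n$ on $T_{j_0}$ and the pairwise action on each adjacent in-block pair agree in the sense required for Proposition \ref{p:e_ncharge} to give the stated $\pm 1$. This should follow from the triviality of the $R$-matrix between same-shape KR crystals and a signature-rule argument showing that adjacent same-shape factors see the same $e_n$-action locally as they do globally, but carrying out this step cleanly across all cases is where care is needed.
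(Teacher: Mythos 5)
Your proposal is correct and follows essentially the same route as the paper: localize the change in $\scharge$ to the two adjacent pairs containing the factor on which $e_n$ acts, apply Proposition \ref{p:e_ncharge} to get the $\pm 1$ contributions weighted by position, and run the same four-way casework on whether the factor sits in the interior, at a block boundary, or in a singleton block. The compatibility issue you flag at the end (global versus pairwise action of $e_n$ within a same-shape block) is used implicitly in the paper's proof as well, so your treatment is, if anything, slightly more careful on that point.
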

\begin{proof}
By definition, we have $e_n(T_1 \otimes \cdots \otimes T_k) = T_1 \otimes \cdots \otimes e_n(T_i) \otimes \cdots \otimes T_k$ for some $i$. It follows that the only local semicharges that will change are at $T_{i-1}\otimes T_i$ and $T_i \otimes T_{i+1}$. Using Definition \ref{semicharge} and Proposition \ref{p:e_ncharge}, we know that 
\[
  \scharge(T_{i-1} \otimes e_n(T_i)) = \begin{cases}
    \scharge(T_{i-1} \otimes T_i) - 1 & R_{i-1} = R_i \\
    0 & R_{i-1} \neq R_i
  \end{cases},  
\]
and 
\[
  \scharge(e_n(T_{i}) \otimes T_{i+1})) = \begin{cases}
    \scharge(T_{i} \otimes T_{i+1}) + 1 & R_i = R_{i+1} \\
    0 & R_i \neq R_{i+1}
  \end{cases}.  
\]
If $R_{i-1} = R_i = R_{i+1}$, then 
\[ \scharge(e_n(T)) = \scharge(T) - (i-1) + i = \scharge(T) + 1.\]
If $R_{i-1} = R_i \neq R_{i+1}$, then $i \equiv 0 \bmod d_R$, and 
\[
    \scharge(e_n(T)) = \scharge(T) - (i-1) \equiv \scharge(T) + 1 \bmod d_R.
\]
If $R_{i-1} \neq R_i = R_{i+1}$, then $i \equiv 1 \bmod d_R$, and 
\[
  \scharge(e_n(T)) = \scharge(T) + i \equiv \scharge(T) + 1 \bmod d_R.  
\]
If $R_{i-1} \neq R_i \neq R_{i+1}$, then $d_R = 1$ so the statement is trivial. Thus in all cases $\scharge(e_n(T)) = \scharge(T) +1 \bmod d_R$ as desired. The proof of the second claim is analogous.
\end{proof}
\begin{prop} \label{p:charge_on_edges}
Suppose that $T, T' \in \cal{T}(R)$ are connected by an edge in $\cal{T}(R).$. Then we have $\scharge(T) \equiv \scharge(T') \bmod{d_R}$.
\end{prop}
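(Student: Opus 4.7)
My plan is to leverage Theorem \ref{t:krdeg_crystal}, which realizes each edge of $\cal{T}(R)$ as a nontrivial crystal commutator $C_i = e_i e_{i+1} f_i f_{i+1}$ for some $i \in \{1, \ldots, n\}$ (with subscripts read cyclically modulo $n$). The proposition will then follow by checking, for each such commutator, that applying $C_i$ to any vertex of $\cal{T}(R)$ changes $\scharge$ by $0 \bmod d_R$. As noted immediately before the proposition, we may and do assume $R = (R_1^{\otimes m_1},\ldots, R_\ell^{\otimes m_\ell})$, so Lemma \ref{l:charge_scharge} is in force.

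Two inputs drive the argument. First, Lemma \ref{l:chargechangeontensor} tells us how the affine operators act: $e_n$ shifts $\scharge$ by $+1$ and $f_n$ shifts it by $-1$, both modulo $d_R$. Second, I claim the classical operators $e_j, f_j$ with $1 \le j \le n-1$ preserve $\scharge$ modulo $d_R$. The cleanest way to obtain this is to route through $\charge$: classical crystal operators preserve $\charge$ exactly, because $\charge$ coincides with the global energy function on a $\otimes$KR crystal, which is by construction constant on classical components (Nakayashiki--Yamada, Shimozono). Combined with Lemma \ref{l:charge_scharge}, this upgrades to the desired invariance of $\scharge$ modulo $d_R$.

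With these inputs in hand, I would split into three cases according to $i$. If $1 \le i \le n-2$, then $C_i$ is a composition of four classical operators, so it preserves $\scharge \bmod d_R$ by the previous paragraph. If $i = n-1$, then $C_{n-1} = e_{n-1} e_n f_{n-1} f_n$; applying right to left, the four successive contributions modulo $d_R$ are $-1, 0, +1, 0$, which total $0$. If $i = n$, then $C_n = e_n e_1 f_n f_1$, with contributions $0, -1, 0, +1$, again totaling $0$. Since Theorem \ref{t:krdeg_crystal} guarantees that every edge of $\cal{T}(R)$ arises from one of these $C_i$, the proposition follows.

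The main obstacle is justifying the second input, namely that classical crystal operators preserve $\scharge$ modulo $d_R$. A direct, hands-on verification looks unpleasant: $\scharge$ is defined as a sum of \emph{local} contributions between adjacent same-shape tensor factors, whereas a classical $e_j$ acts on the factor determined by the signature rule applied to the \emph{entire} tensor word, and this need not agree with the signature rule restricted to any pair of consecutive factors. Detouring through $\charge$ sidesteps this entirely, at the cost of importing one nontrivial piece of $\otimes$KR-crystal theory; the write-up should cite this cleanly rather than reprove it.
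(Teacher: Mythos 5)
Your proposal is correct and follows essentially the same route as the paper: both realize each edge as a crystal commutator $C_i$ via Theorem \ref{t:krdeg_crystal}, handle the classical operators by noting they preserve $\charge$ exactly (hence $\scharge$ modulo $d_R$ by Lemma \ref{l:charge_scharge}), and handle $e_n, f_n$ via Lemma \ref{l:chargechangeontensor}, with the contributions cancelling. Your write-up is in fact slightly more explicit than the paper's (which compresses the $i=n-1$ and $i=n$ cases into one sentence), and your closing observation about why one should detour through $\charge$ rather than track $\scharge$ directly under classical operators correctly identifies the point the paper leaves implicit.
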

\begin{proof}
Without loss of generality, we have $T = e_{i}e_{i+1}f_{i}f_{i+1}(T')$ for some $i \in [n]$. If $i, i+1 \in [n-1]$, then $\charge(T) = \charge(T')$ since classical crystal operators do not change charge. Otherwise, one of $i, i+1$ is $n$, and the net change of $\scharge$ will be $0 \bmod d_R$ by Lemma \ref{l:chargechangeontensor}. In either case, the claim follows.
\end{proof}

\begin{lemma} \label{l:pr_semicharge}
Suppose $T_1, T_2$ are increasing rectangular tableaux of shape $(s^r)$, such that $n$ appears at most once between them. Then
\[
  \scharge(\pr(T_1 \otimes T_2)) = \scharge(T_1 \otimes T_2) + \begin{cases}
    -1 & n \in T_1 \\ 
    \hphantom{-}1 & n \in T_2\\ 
    \hphantom{-}0 & \text{otherwise}
  \end{cases}.
\]
\end{lemma}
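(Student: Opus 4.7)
Since $R_1 = R_2 = (s^r)$, Definition~\ref{semicharge}(2) gives $\scharge(T_1\otimes T_2) = \charge(T_1\otimes T_2)$, so the claim reduces to showing that the ordinary charge changes by the stated amount under componentwise promotion. Recall that $\charge(T_1\otimes T_2)$ counts the cells of the row-insertion tableau $P(\rw(T_1)\rw(T_2))$ whose row index exceeds $r$. The hypothesis that $n$ appears at most once between $T_1$ and $T_2$ will be used first to ensure that $\pr(T_1\otimes T_2)=\pr(T_1)\otimes\pr(T_2)$: the jdt portion of the promotion procedure is local to the unique factor containing $n$ (if any). The plan is then to split into three cases based on the location of the letter $n$.

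If $n$ appears in neither factor, promotion acts as the uniform shift $i\mapsto i+1$ with no jdt slides triggered. The concatenated reading word $\rw(\pr(T_1))\rw(\pr(T_2))$ is a uniform shift of $\rw(T_1)\rw(T_2)$, and since the shape of a row-insertion tableau depends only on the relative order of the letters, the two shapes coincide and $\charge$ is unchanged, matching the ``otherwise'' case.

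Suppose instead that $n$ occurs exactly once in $T_2$. Let $w:=\rw(T_1)\rw(T_2)$ and $w':=\rw(\pr(T_1))\rw(\pr(T_2))$. The key observation is that, up to Knuth equivalence (and therefore up to the $P$-shape), $w'$ is obtained from $w$ by replacing the unique $n$ with a $1$ and incrementing every other letter by $1$; this is because the internal jdt slides performed during the promotion of $T_2$ preserve Knuth equivalence of the relevant reading word. I would then compare the two $P$-shapes directly. The single $n$ in $w$ is the maximum letter, so when inserted it sits at the end of row $1$ at the time of its insertion and is only displaced further by bumping from smaller letters inserted later; the single $1$ in the modified $w'$ is the minimum letter, so it enters position $(1,1)$ and bumps a chain down the first column, extending column $1$ by exactly one cell. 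Using the fact that $T_1$'s first column contributes $r$ strictly increasing entries, one shows that the first column of $P(w^-)$ (with $w^-:=w$ minus the $n$) has height at least $r$. Therefore the extra cell of $P(w')$ lies in a row strictly greater than $r$, while the extra cell of $P(w)$ lies in row $1$, giving the net change of $+1$ to the count below row $r$.

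If $n$ occurs exactly once in $T_1$, the analysis is symmetric, yielding the net change of $-1$. The main obstacle will be making the Knuth-equivalence reduction and the comparison of bumping paths precise: the letter $n$ in $P(w)$ need not remain in row $1$, since subsequent insertions of smaller letters can bump it down columns, and the position of the new $1$ in $\pr(T_2)$ depends on the exact jdt path followed during promotion. Both issues are resolved by working throughout up to Knuth equivalence (so the precise intra-factor position of the $n$ or the new $1$ becomes irrelevant for the $P$-shape) and by tracking the bumping chains explicitly using the fact that the migrating letter is the unique maximum or unique minimum of the word.
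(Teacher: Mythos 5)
Your overall strategy matches the paper's: reduce $\scharge$ to ordinary charge, note that promotion acts on the factor containing $n$, dispose of the case where $n$ is absent by a relative-order argument, and in the remaining cases track (i) the row index of the cell containing $n$ in $P(\rw(T_1)\rw(T_2))$ and (ii) the row index of the extra cell created by the new letter $1$. The bookkeeping (comparing $P(w)$ with $P(w^-)$ and $P(w')$ with $P((w')^-)$, whose shapes agree with $P(w^-)$) is exactly right. However, the execution has genuine gaps at both localization steps.

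First, the claim that, up to Knuth equivalence, $w'$ is obtained from $w$ by ``replacing the unique $n$ with a $1$ and incrementing the rest'' is false: a Knuth class depends on where the $1$ sits, not on the multiset of letters. For $T_2=\tableau{1&3\\2&4}$ with $n=4$, replacing $n$ in place gives the word $3124$ with $P(3124)=\tableau{1&2&4\\3}$, whereas $\pr(T_2)=\tableau{1&2\\3&4}$; these lie in different Knuth classes. The new $1$ always occupies cell $(1,1)$ of its factor, and the argument must use that position rather than declare it irrelevant. Second, your assertion that $n$ sits in row $1$ of $P(w)$ is false: for $T_1=\tableau{1&2\\3&4}$, $T_2=\tableau{5&6\\7&8}$ one gets $P(w)=\tableau{1&2&5&6\\3&4&7&8}$ with $n=8$ in row $2$. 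What you need is row index $\le r$, and with the \emph{row} reading word $n$ is followed by $(r-1)s$ letters of its own factor, so ``each later insertion bumps it at most once'' only gives row $\le 1+(r-1)s$. The paper sidesteps this by inserting the Knuth-equivalent column word $\cw(T_2)$, after which only $r-1$ letters follow $n$; with row words you would need the row-bumping lemma (each later row of $T_2$, being increasing, can bump $n$ at most once). Third, and most seriously, the new $1$ is not the last letter inserted, so the fact that its own bumping chain extends column $1$ by one cell at the moment of its insertion does not identify the cell of $\sh(P(w'))/\sh(P((w')^-))$ once the remaining letters are inserted; and in the case $n\in T_1$, which you dismiss as symmetric, the $1$ arrives when column $1$ has height only $r-1$ and is followed by all of $\rw(\pr(T_2))$, so the needed conclusion there is the \emph{opposite} one (extra cell in row $\le r$) and does not follow from any symmetry. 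The paper closes exactly this point with Greene's theorem, comparing maximal unions of $r$ disjoint increasing subsequences of the word with and without the $1$; some such global argument, or a genuinely carried-out bumping-route analysis, is required to complete your proof.
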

\begin{proof}
If $n$ is not in either $T_i$, then promotion will only increase the value of every cell by $1$. 
This will not change the semicharge, since the relative order of any two cells (and thus the shape of the resulting insertion tableau) remains the same. For the remaining two cases, we introduce the following notation. As in Definition \ref{promotion}, for $i \in \{1,2\}$, we denote $T_i'$ the result of applying the first step of promotion to $T_i$, and $T_i''$ the result of applying steps $(1)$--$(3)$. In each case, we consider how each step of promotion affects the insertion tableau.\\
($n \in T_1$): Since $T_1$ is a rectangle shape, $n$ must be in the south-east corner of $T_1$. No cell in $T_2$ is bigger than $n$, so any step of the row insertion algorithm that interacts with $n$ will bump it to a lower row. It follows that removing $n$ from $T_1$ will decrease the number of cells with row index greater than $r$ by $1$. Next, we note that applying jeu de taquin is equivalent to a sequence of Knuth moves, and increasing all values by $1$ doesn't change the relative order of any two cells, so steps $(2)$ and $(3)$ don't affect the semicharge either. Finally, let $u = \cw(T_1'')\cw(T_2'')$ and $v = \cw(\pr(T_1))\cw(\pr(T_2)) = u_1\cdots u_{r-1}1\,u_r\cdots u_{2rs-1}$. Consider any collection $A_1,\ldots, A_r$ of disjoint increasing subsequences of $u$ of maximal size. At least one $A_i$ must start with a letter $u_i$ with $i > r$. It follows that $A_1,\ldots, A_r$ is maximal in $u$ if and only if $A_1,\ldots, \{1\}\cup A_i, \ldots, A_r$ is maximal in $v$. Thus, by Greene's theorem \cite[Theorem A1.1.1]{stanley_fomin_1999}, the cell in $P(v)/P(u)$ must be in one of the first $r$ rows (and so not contribute to semicharge). Hence $\scharge(\pr(T_1 \otimes T_2)) = \scharge(T_1 \otimes T_2) - 1$ in this case. \\
($n \in T_2$): We claim after inserting $\cw(T_2)$ into $T_1$, the cell containing $n$ will be in one of the first $r$ rows of the insertion tableau. Indeed, when $n$ is inserted into $T_1$, it will be appended to the end of the first row. Since $n$ is in the southeast corner of $T_2$, there are only $r-1$ cells that are inserted after $n$ that can bump it down. It follows that step $(1)$ doesn't change semicharge. As in the previous case, steps $(2)$ and $(3)$ do not change semicharge either. Finally, adding $1$ to $T_2''$ will increase semicharge by $1$, since when it is inserted into $T_1$ it will shift the first column down by $1$, leading to an extra cell with row index greater than $r$.
\end{proof}

\begin{prop} \label{p:pr_semicharge}
    Let $T\in \cal{T}(R)$ for $R=(R_1,\cdots,R_k)$. Then $\scharge(\pr(T))=\scharge(T)-1 \bmod d_R$.
\end{prop}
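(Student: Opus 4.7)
The plan is to expand $\scharge(\pr(T))$ via the recursive definition in Definition \ref{semicharge} as a weighted sum over adjacent tensor factor pairs, and to apply Lemma \ref{l:pr_semicharge} term by term. Because $T$ has weight zero, each letter of $[n]$ appears exactly once in $T$; in particular, $n$ sits in a unique tensor factor $T_j$. Only the pairs $T_{j-1}\otimes T_j$ and $T_j\otimes T_{j+1}$ can undergo any change in local semicharge under promotion. For any other pair, promotion acts by a uniform shift on both factors, preserving shape and the relative order of all entries, hence preserving local semicharge.

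First I would isolate the contributing pairs. For any pair with $R_i \neq R_{i+1}$, the local semicharge is identically zero before and after promotion, so such pairs contribute nothing. For pairs with $R_i = R_{i+1}$, Lemma \ref{l:pr_semicharge} gives a local change of $+1$ when $n \in T_{i+1}$ (i.e.\ $i = j-1$), $-1$ when $n \in T_i$ (i.e.\ $i = j$), and $0$ otherwise. Incorporating the index weights from Definition \ref{semicharge} yields
\[
\scharge(\pr(T)) - \scharge(T) \;=\; (j-1)\,\epsilon_L \;-\; j\,\epsilon_R,
\]
where $\epsilon_L = 1$ if $j \geq 2$ and $R_{j-1}=R_j$ (and $0$ otherwise), and $\epsilon_R = 1$ if $j \leq k-1$ and $R_j = R_{j+1}$ (and $0$ otherwise).

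Next I would run a short case analysis on the position of $j$ within the block structure $R = (R_1^{\otimes m_1},\ldots, R_\ell^{\otimes m_\ell})$, using the fact that each partial sum $\alpha_\beta = m_1 + \cdots + m_\beta$ is divisible by $d_R$. If $j$ is interior to its block, the total is $(j-1) - j = -1$. If $j$ is at the start of a block of size at least $2$, only $\epsilon_R$ fires and the total is $-j = -(\alpha_{\alpha-1}+1) \equiv -1 \pmod{d_R}$. If $j$ is at the end of such a block, only $\epsilon_L$ fires and the total is $j-1 = \alpha_\alpha - 1 \equiv -1 \pmod{d_R}$. A singleton block forces $d_R \mid 1$, so $d_R = 1$ and the congruence is vacuous. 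The boundary indices $j=1$ and $j=k$ are handled by the same analysis with the convention that the absent neighboring factor contributes zero.

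The one point I expect to need care is ensuring that $\pr$ acts factor-wise on $\cal{T}(R)$, so that the pairwise statement of Lemma \ref{l:pr_semicharge} applies directly to each $T_i \otimes T_{i+1}$; this follows from the definition of promotion on each KR-crystal factor and is already implicit in Proposition \ref{p:pr_automorphism}. Beyond that, the argument is a straightforward bookkeeping check, with the divisibility $d_R \mid \alpha_\beta$ at every block boundary being exactly what collapses the various cases to the same residue $-1 \pmod{d_R}$.
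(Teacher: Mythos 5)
Your proposal is correct and follows essentially the same route as the paper: isolate the two adjacent pairs $T_{j-1}\otimes T_j$ and $T_j\otimes T_{j+1}$ affected by promotion, apply Lemma \ref{l:pr_semicharge} to each, and resolve the weighted sum by the same four-case analysis on the shapes $R_{j-1},R_j,R_{j+1}$ that the paper delegates to the argument of Lemma \ref{l:chargechangeontensor}. Your version merely spells out that case analysis explicitly, with the correct signs and the correct use of $d_R\mid\alpha_\beta$ at block boundaries.
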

\begin{proof}
    We may assume $k>2$ since the case $k=2$ immediately follows from Lemma \ref{l:pr_semicharge}. Since $T=T_1\otimes\cdots\otimes T_k$ has the standard filling, then $n\in T_i$ for some unique $i$. By Lemma \ref{l:pr_semicharge}, promotion will only change the local semicharge of $T_{i-1}\otimes T_i$ and $T_i\otimes T_{i+1}$. That is,
    \[
  \scharge(\pr(T_{i-1} \otimes T_i)) = \begin{cases}
    \scharge(T_{i-1} \otimes T_i) + 1 & R_{i-1} = R_i \\
    0 & R_{i-1} \neq R_i
  \end{cases},  
\]
and 
\[
  \scharge(\pr(T_{i} \otimes T_{i+1})) = \begin{cases}
    \scharge(T_{i} \otimes T_{i+1}) - 1 & R_i = R_{i+1} \\
    0 & R_i \neq R_{i+1}
  \end{cases}.  
\]
Then one can use a similar argument as in Lemma \ref{l:chargechangeontensor} to discuss four cases of shapes $R_{i-1}, R_i$ and $R_{i+1}$. It follows that $\scharge(\pr(T))=\scharge(T)-1 \bmod d_R$.
\end{proof}

\begin{cor} \label{c:cclowerbound}
$\cal{T}(R)$ has at least $d_R$ connected components.
\end{cor}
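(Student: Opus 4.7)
The plan is to combine the two preceding propositions in a very direct way. Proposition \ref{p:charge_on_edges} tells us that $\scharge \bmod d_R$ is constant on connected components of $\cal{T}(R)$, so to obtain a lower bound of $d_R$ on the number of components it suffices to exhibit vertices realizing each of the $d_R$ residue classes modulo $d_R$. Proposition \ref{p:pr_semicharge} gives exactly the tool to produce such vertices: promotion shifts semicharge by $-1 \bmod d_R$.

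Concretely, I would argue as follows. Pick any vertex $T \in \cal{T}(R)$ (assuming the vertex set is nonempty, else the claim is vacuous). By Proposition \ref{p:pr_automorphism}, $\pr$ maps $\cal{T}(R)$ to itself, so each iterate $\pr^j(T)$ for $j = 0, 1, \ldots, d_R - 1$ is also a vertex of $\cal{T}(R)$. Applying Proposition \ref{p:pr_semicharge} inductively,
\[
\scharge(\pr^j(T)) \equiv \scharge(T) - j \pmod{d_R}.
\]
As $j$ ranges over $\{0, 1, \ldots, d_R - 1\}$, the values $\scharge(T) - j$ cover every residue class modulo $d_R$ exactly once. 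Therefore the $d_R$ vertices $T, \pr(T), \ldots, \pr^{d_R - 1}(T)$ have pairwise distinct semicharges modulo $d_R$.

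Finally, if any two of these vertices were in the same connected component of $\cal{T}(R)$, they would be joined by a path of edges, and iterating Proposition \ref{p:charge_on_edges} along this path would force their semicharges to coincide modulo $d_R$, a contradiction. Hence these $d_R$ vertices lie in $d_R$ distinct connected components, proving the corollary. There is no real obstacle here: the work has already been done in Propositions \ref{p:charge_on_edges} and \ref{p:pr_semicharge}, and what remains is just stringing them together. The mildly delicate bookkeeping was in proving Proposition \ref{p:pr_semicharge} itself, where the effect of promotion on the local semicharge of each adjacent pair of tensor factors had to be carefully tracked in the two shape-boundary cases; at the level of this corollary, only the congruence output of that proposition is needed.
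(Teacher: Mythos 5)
Your proof is correct and follows exactly the paper's argument: take any vertex $T$, use Proposition \ref{p:pr_semicharge} to see that $T, \pr(T), \ldots, \pr^{d_R-1}(T)$ realize all residues of semicharge modulo $d_R$, and conclude via Proposition \ref{p:charge_on_edges} that they lie in distinct components. The only difference is that you spell out the induction and the appeal to Proposition \ref{p:pr_automorphism}, which the paper leaves implicit.
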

\begin{proof}
Pick any $T \in \cal{T}(R)$, then $T, \pr(T), \ldots, \pr^{d_R - 1} (T)$ have distinct semicharges modulo $d_R$ by Proposition \ref{p:pr_semicharge}. By Proposition \ref{p:charge_on_edges}, they must belong to different components of $\cal{T}(R)$.
\end{proof}

\subsection{The upper bound of \texorpdfstring{$d_R$}{dR}} \label{subsec:upperbound}
The remainder of this section is spent showing that $\cal{T}(R)$ has \emph{at most} $d_R$ connected components. There are many technical propositions and lemmas that we combine to obtain this result, but we will summarize the general strategy here. We need to show that any two vertices with semicharge equivalent modulo $d_R$ are connected by a path in $\cal{T}(R)$. The main technical challenge comes from the fact that the edges $t_n$, $\bar{t}_1$, and $\bar{t}_{n-1}$ are hard to describe, and so any attempt at describing a path using these edges requires lots of extra bookkeeping. To get around this issue, we do the following two things.
\begin{enumerate}[1.]
\item Find a collection $\cal{A} \subset \cal{T}(R)$ such that each $A_i \in \cal{A}$ is connected to $\pr^{d_i}(A_i)$ for some $d_i$, with the added condition that $\gcd(\{d_i\}) = d_R$.
\item Find a path consisting of either edges in $\cal{T}(R)$ \emph{or} powers of promotion that brings \emph{any} $T \in \cal{T}(R)$ to a fixed $T' \in \cal{T}(R)$.
\end{enumerate}
These two steps, combined with repeated applications of Proposition \ref{p:pr_automorphism} allow us to prove the second half of Theorem \ref{t:connectedcomponents}.

\begin{lemma} \label{partialpr}
    Let $T$ be a rectangular tableau with $r+1$ rows and $s$ columns. Suppose $T$ has consecutive entries in each of its columns such that \[T=
    \ytableausetup{mathmode, boxsize=2em}
    \begin{ytableau}
\scriptstyle{l_1}&\scriptstyle{l_2}&\scriptstyle{l_3}&\cdots&\scriptstyle{l_s} \\ \scriptstyle{l_1+1}&\scriptstyle{l_2+1}&\scriptstyle{l_3+1}&\cdots&\scriptstyle{l_s+1}\\\vdots&\vdots&\vdots&\vdots&\vdots\\\scriptstyle{l_1+r}&\scriptstyle{l_2+r}&\scriptstyle{l_3+r}&\cdots&\scriptstyle{l_s+r}
\end{ytableau}
\]
with $\ell_{i}+r < \ell_{i+1}$.
Then $\pr^{n-(l_s+r)+(r+1)}(T)$ has consecutive entries $\{1,2,\cdots,r+1\}$ in its first column. Furthermore, the rest of its columns are obtained by shifting all entries in column $1$ to column $s-1$ of $T$ to the right by one cell, and adding $n-(l_s+r)+(r+1)$ to each entry.
\end{lemma}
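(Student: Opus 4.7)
The plan is to reduce to the case $\ell_s + r = n$ and then analyze the base case by induction on the number of promotions. For the reduction, observe that if $\ell_s + r < n$ then $T$ contains no entry equal to $n$, so the jeu de taquin phase of promotion is vacuous and $\pr(T)$ is simply $T$ with every entry increased by $1$. The resulting tableau has the same block structure with each $\ell_i$ replaced by $\ell_i + 1$, and the exponent $k := n - (\ell_s + r) + (r+1)$ drops by one. Iterating $n - (\ell_s + r)$ times reduces the problem to the case $\ell_s + r = n$ and $k = r+1$.

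In the base case, let $T_j := \pr^j(T)$. I would prove by induction on $j \in \{0, 1, \ldots, r+1\}$ the following explicit description of $T_j$: when $1 \leq i \leq j$, set $T_j(i, 1) = i$ and $T_j(i, c) = \ell_{c-1} + j + i - 1$ for $2 \leq c \leq s$; when $j+1 \leq i \leq r+1$, set $T_j(i, c) = \ell_c + i + j - 1$ for $1 \leq c \leq s-1$ and $T_j(i, s) = \ell_s + i - 1$. The case $j = 0$ recovers the original $T$, while at $j = r+1$ the first column is $(1, 2, \ldots, r+1)$ and the $c$-th column for $c \geq 2$ equals $(\ell_{c-1} + r+1, \ldots, \ell_{c-1} + 2r+1)$, precisely the form claimed by the lemma.

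The inductive step uses the explicit formula for $T_j$ to determine the slide path of the $(j+1)$-st promotion and show it follows the L-shape
\[
(r+1, s) \to (r, s) \to \cdots \to (j+1, s) \to (j+1, s-1) \to \cdots \to (j+1, 1) \to (j, 1) \to \cdots \to (1, 1).
\]
Using the spacing inequality $\ell_s - \ell_{s-1} \geq r + 1$: at each $(i, s)$ with $j + 2 \leq i \leq r+1$ the above entry $\ell_s + i - 2$ strictly exceeds the left entry $\ell_{s-1} + i + j - 1$, so $*$ slides up; at $(j+1, s)$ the left value $\ell_{s-1} + 2j$ first surpasses the above value $\ell_{s-1} + 2j - 1$ (with no above neighbor when $j = 0$), so $*$ turns left; along row $j+1$ each left entry beats its above neighbor by exactly one and $*$ continues to $(j+1, 1)$; finally $*$ moves straight up column $1$ to $(1, 1)$, where the inserted $0$ becomes $1$ after the terminal $+1$ step. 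Substituting the $T_j$-values along this path into the slide-then-add-$1$ rule yields, by direct case-by-case calculation (splitting into positions on the path versus off the path, and into the two regions of the $T_j$ formula), the explicit formulas claimed for $T_{j+1}$.

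The argument is primarily bookkeeping, and the chief obstacle is the case analysis required to confirm that the slide path turns precisely at row $j+1$. This depends crucially on the spacing condition $\ell_i + r < \ell_{i+1}$, which is used in exactly the inequality $\ell_s - \ell_{s-1} \geq r+1$ to keep ``above'' dominant throughout column $s$ until row $j+1$, at which point the ``left'' value first surpasses it by exactly one.
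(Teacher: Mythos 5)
Your proposal is correct and follows essentially the same route as the paper: first dispose of the $n-(\ell_s+r)$ promotions that merely increment every entry, then track the remaining $r{+}1$ promotions one at a time, showing that the $j$-th slide path runs up column $s$, turns left along row $j$, and climbs column $1$, so that one row at a time gets shifted right with a small entry deposited in the first column. Your version simply makes explicit (via the closed formula for $\pr^j(T)$ and the inequality checks at the turning cell) what the paper's proof asserts more informally.
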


\begin{proof}
    First, note that $\pr^{n-(l_s+r)}(T)$ is obtained by increasing the values of every cell by $n - (l_s + r)$, so the last column has consecutive entries $n-r, n-r+1, \ldots, n$. The next $r+1$ applications of promotion can be computed as follows.
    The first application of $\pr$ will have the effect of shifting the first row to the right (with a one in the northwest corner), and adding one to every cell. This is easy to see, since the jeu de taquin slides will follow the northeast boundary of $T$. Similarly, the $i$'th application of $\pr$ will have the same effect, with the $i$'th row of $T$ getting shifted to the right and $i$ getting placed in the left-most cell of that row. After $r+1$ applications of $\pr$, the net effect is for all columns to get shifted one to the right, with the first column now being $1,\ldots, r+1$. Taking into account the first $n - (l_s + r)$ applications of $\pr$, all elements not in the first column will have increased by $n - (l_s + r) + r + 1$, as desired.
\end{proof}

\begin{remark} \label{partialprr}
    In Lemma \ref{partialpr}, $\pr^{n-(l_s+r)+(r+1)}(T)$ can also be obtained by the following steps:
    \begin{enumerate}
        \item Find $\jdt_{l_s+r}$ and add $1$ to each cell, $r+1$ times.
        \item Add $n-(l_s+r)$ to each entry except those in column $1$.
    \end{enumerate}
\end{remark}

\begin{example}
    Let $n=22, r=2, s=3$, and $T=\tableau{4&8&14\\5&9&15\\6&10&16}$. Then we obtain $\pr^9(T)$ as \[T=\tableau{4&8&14\\5&9&15\\6&10&16}\xrightarrow{\parbox{1.95cm}{\tiny{Shift the first two columns to the right by one cell.}}}\tableau{*&4&8\\*&5&9\\*&6&10}\xrightarrow{\parbox{1.8cm}{\tiny{Fill the first column with 1,2,3.}}}\tableau{\textcolor{red}{1}&4&8\\\textcolor{red}{2}&5&9\\\textcolor{red}{3}&6&10}\xrightarrow{\parbox{1.8cm}{\tiny{Add $9$ to each entry except the first column.}}}\tableau{\textcolor{red}{1}&13&17\\\textcolor{red}{2}&14&18\\\textcolor{red}{3}&15&19}=\pr^9(T).\]
    By Remark \ref{partialprr}, denoting step $(1)$ by $\star$ we can also obtain $\pr^9(T)$ as \[T=\tableau{4&8&14\\5&9&15\\6&10&16}\xrightarrow{\star}\tableau{1&5&9\\6&10&15\\7&11&16}\xrightarrow{\star}\tableau{1&6&10\\2&7&11\\8&12&16}\xrightarrow{\star}\tableau{1&7&11\\2&8&12\\3&9&13}\xrightarrow{\parbox{1.8cm}{\tiny{Add $6$ to each entry except the first column}}}\tableau{1&13&17\\2&14&18\\3&15&19}=\pr^9(T).\]
\end{example}

Now we introduce a variation of the jeu de taquin slide, which can be applied to any cell of a tableau. Here we focus on rectangular partitions for our purposes.

\begin{defn}
 Let $T$ be a rectangular tableau. The \emph{partial jeu de taquin slide} of $T$ in terms of any cell $a$ is defined as follows: 
\begin{enumerate}
    \item Remove all the boxes below and to the right of $a$ so that $a$ becomes an outer box of the rest of the tableau $T'$.
    \item Find $\jdt_a(T')$ using the classical jeu de taquin slide, and add the entry $0$ in the empty spaces on the inner boundary.
    \item Add the removed boxes back to their original positions.
\end{enumerate}
If $a$ is an outer box of $T$, then this partial version is exactly the same as the classical jeu de taquin slide. Using an abuse of notation, we also write it as $\jdt_a(T)$.
\end{defn}

\begin{example}
Let $T=\tableau{1&4&5\\2&6&8\\3&7&9}$, then $\jdt_7(T)=\tableau{0&1&5\\2&4&8\\3&6&9}$ with the last column fixed. Also, $\jdt_6(T)=\tableau{0&1&5\\2&4&8\\3&7&9}$ with all entries below and to the right of $6$ (i.e., the last row and last column) fixed.
\end{example}

\begin{defn}\label{rowcolofT}
    For $R = (R_1, \ldots, R_k)$ with $R_i$ be a rectangular partition of shape $s_i^{r_i}$ for all $i\in [1,k]$, let $T= T_1 \otimes \cdots \otimes T_k\in \cal{T}(R)$. Without loss of generality, we can assume that $s_1\leq s_2\leq \cdots \leq s_k$ and let $r'=\text{max}\{r_i\}$. We define the $i$'th row (resp. column) of $R$ as the union of the cells in the $i$'th row (resp. column) of each $R_j$. If a given rectangle has less than $i$ rows or columns, we ignore it. 
\end{defn}

Next, we define some distinguished elements of $\mathcal{T}(R)$, and prove that they are connected to certain elements in their orbit under promotion. This ends up being helpful in the proof of Theorem \ref{t:connectedcomponents}. 

\begin{defn}\label{mstdcol} 
Let $R = (R_1, \ldots, R_k)$, where each $R_i = (s_i^{r_i})$. Let $r' = \max(r_i)$. For $0 \leq m \leq r'$, we denote by $\row_m(R)$ the element of $\cal{T}(R)$ filled in the following way.
\begin{itemize}
    \item Fill the first $m$ rows of every rectangle from left to right, top to bottom with consecutive entries, starting at $1$.
    \item For the remaining cells, fill the first column of each factor from top to bottom (now reading the factors from right to left), with consecutive entries starting where the last step left off.
    \item Continue filling one column at a time, skipping rectangles if they do not have enough columns.
\end{itemize}
We call $\row_0(R)$ the \emph{column superstandard} filling, and $\row_{r'}(R)$ the \emph{row superstandard} filling, and denote them $\col(R)$ and $\row(R)$ respectively.
\end{defn}
Later we will want to refer to the values of specific cells in $\row_m(R)$. Let $c_{m, i}$ denote the maximum entry in column $i$, and $a_{m,i}$ denote the maximum entry in row $i$. For simplicity, if we set $c_{0,i} = c_i$.

\begin{example}
Let $R = ((2^2),(3^3),(3^3))$. The two special fillings of $T$ in Example \ref{ex22} $(a)$ and $(b)$ give examples for $\col(R)=\row_0(R)$ and $\row_2(R)$, respectively. Also, \[\row(R) = \row_3(R) =\tableau{1&2\\9&10}\otimes \tableau{3&4&5\\11&12&13\\17&18&19}\otimes \tableau{6&7&8\\14&15&16\\20&21&22}.\]
\end{example}

\begin{prop}\label{monster}
Let $T = \row_m(R)$, and fix $d \in \{c_{m,1}, \ldots, c_{m, s_k}, a_{m,m}\}$. Let $S_j$ denote the sequence of edges in $\cal{T}(R)$ given by $S_j := t_{j-(d-1)}t_{j-(d-2)}\cdots t_{j-1}t_j$ for $d\leq j \leq n-1$. Skipping any swaps that are not applicable, we have $S_{n-1}S_{n-2}\cdots S_{d+1}S_d(T) = \pr^{n-d}(T)$. In other words, $T$ and $\pr^{n-d}(T)$ are connected in $\cal{T}(R)$.
\end{prop}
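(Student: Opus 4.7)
The plan is to give an explicit description of $\pr^{n-d}(T)$ and then track the motion of each entry through the sequence $S_{n-1}\cdots S_d$, comparing cell-by-cell.

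The first step is to compute $\pr^{n-d}(T)$ using Lemma \ref{partialpr} and Remark \ref{partialprr}, applied factor-by-factor to the tensor product structure of $\row_m(R)$. The allowed choices of $d$, namely the column maxima $c_{m,i}$ and the row-$m$ maximum $a_{m,m}$, are precisely those for which the cells containing entries $\leq d$ assemble into a block of consecutive rows/columns with the property needed to invoke the lemma. This yields a concrete description of $\pr^{n-d}(T)$: entries $1,\ldots,d$ have their values increased by $n-d$ (and their cells shifted in the controlled way described by the lemma), while entries $d+1,\ldots,n$ have their values decreased by $d$ and occupy the cells vacated by the first group.

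Next, the plan is to analyze a single $S_j = t_{j-d+1}\cdots t_{j-1}t_j$ (applied right-to-left). When every $t_i$ in this product is a valid DEG edge, the composition $S_j$ acts on the standard tableau as a cyclic rotation of the $d+1$ values $\{j-d+1,\ldots,j+1\}$: the value $j+1$ moves into the cell that held $j-d+1$, and each $i \in \{j-d+1,\ldots,j\}$ shifts into the cell that held $i+1$. Chaining $S_d, S_{d+1}, \ldots, S_{n-1}$, each subsequent $S_j$ slides this rotation window up by one value, and an inductive argument shows the cumulative effect is exactly the value reassignment found in the first step. This matches $\pr^{n-d}(T)$ cell-by-cell.

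The main obstacle is verifying that at every intermediate stage, the swap $t_i$ currently invoked is a valid edge of $\cal{T}(R)$, i.e., that the descent sets of the current tableau and the tableau obtained by swapping $i$ and $i+1$ are incomparable. This requires a case split on $d = c_{m,i}$ versus $d = a_{m,m}$, and a careful tracking of where entries $i$ and $i+1$ sit after the previously executed swaps. The rigid structure of $\row_m(R)$, with consecutive entries running down columns outside the top $m$ rows and row-wise entries inside them, is precisely what guarantees that at each step $i$ and $i+1$ occupy cells in incomparable row/column positions (often in distinct tensor factors), yielding the necessary incomparability of descent sets. The swaps to be \emph{skipped} are exactly those for which $i$ and $i+1$ would occupy adjacent cells within the same row of the same rectangle, where no dual equivalence move is defined; one checks directly that in each such case the rotation picture of Step 2 already places $i$ and $i+1$ correctly, so omitting the swap does not disturb the final tableau and the identification with $\pr^{n-d}(T)$ is preserved.
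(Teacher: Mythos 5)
Your overall strategy---describe $\pr^{n-d}(T)$ explicitly via Lemma \ref{partialpr} and Remark \ref{partialprr}, then track the effect of each $S_j$ and compare cell by cell---is the same as the paper's, but two of your claims about the dynamics are wrong, and the second is where the real content of the proof lives. First, the rotation direction is reversed: since $S_j$ is applied right to left, the initial swap $t_j$ places the value $j+1$ in the cell that held $j$, after which $j+1$ is never touched again; it is the value $j-d+1$ that ends up in the cell that held $j+1$, while each cell that held $i$ (for $j-d+1\le i\le j$) ends up holding $i+1$. Your version ($j+1$ lands in the cell of $j-d+1$, and $i$ lands in the cell of $i+1$) is the inverse rotation, so chaining it would produce a power of $\pr^{-1}$ rather than of $\pr$, and the cell-by-cell match with $\pr^{n-d}(T)$ fails.

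Second, and more seriously, the ``single $(d+1)$-cycle'' picture is false even with the correct orientation, because of the skipped swaps, and your assertion that omitting them ``does not disturb the final tableau'' has the logic backwards. For $T=\col(R)$ and $d=c_i$, the inapplicable swaps inside $S_d$ are $t_{c_\ell+1}$ for $\ell<i$, where $c_\ell+1$ and $c_\ell+2$ sit \emph{vertically} adjacent in the same column of $T_k$ --- not horizontally adjacent in a row, as you claim; your characterization only matches the $m=r'$ case. Each skip terminates the propagating hole and starts a fresh one, so $S_j$ acts as a product of disjoint shorter cycles, one per column segment, with lengths $r_k, r_{k-1},\ldots$, and it is precisely these shorter cycles that realize the partial jeu de taquin slides of Remark \ref{partialprr} assembling $\pr^{n-d}(T)$. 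If every swap were applicable, the composition would \emph{not} equal $\pr^{n-d}(T)$. The paper's proof is organized around exactly this decomposition (partitioning $S_{n-1}\cdots S_d$ into intervals of lengths $r_k,\ldots,r_{h_\ell}$ and identifying each interval with a $\jdt$ slide on a single tensor factor); your proposal needs that analysis in place of the claim that the skips are harmless.
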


\begin{proof}
We show the case where $m = 0$. Suppose $T = \mathsf{col}(R)$, and suppose $d = c_{i}$. By definition, $d$ is the largest entry in column $i$, and $d+1$ is the smallest entry of column $i+1$. We want to partition the sequence $S_{n-1}S_{n-2}\cdots S_{d+1}S_d$ into intervals in the following way. Suppose that $T_{h_0}, T_{{h_0}+1},\cdots,T_k$ are factors with more than $i$ columns and with number of rows $r_{h_0}, r_{{h_0}+1},\cdots,r_k$. Then partition $S_{d+(\sum_{h_0}^kr_i) -1}\cdots S_{d+1}S_d$ from right to left into intervals with lengths $r_k,r_{k-1},\cdots,r_{h_0}.$ Similarly, for $\ell\in [0,s_k-i-1]$, suppose that $T_{h_\ell}, T_{{h_\ell}+1},\cdots,T_k$ are factors with more than $i+\ell$ columns and with number of rows $r_{h_\ell}, r_{{h_\ell}+1},\cdots,r_k$. Then partition the untouched part of the sequence from right to left into intervals with lengths $r_k,r_{k-1},\cdots,r_{h_\ell}.$ 

Now let $T^{(j)}$ denote the result of applying $S_{d+j-1}\cdots S_d$ to $T$ (so we claim that $T^{(n-d)} = \pr^{n-d}(T)$). Restricting $\rw(T)$ to the entries in $[c_{i-1}+1, c_i+1]$, we get $c_i, c_i - 1, \ldots, c_{i-1} +1, c_i+1$ for each $i$. As a consequence, when we apply $S_d$ to $T$, all swaps will be applicable except for $t_{c_j+1}$, for each $j \in [0, i-1]$. Thus, $T^{(1)}$ can be obtained from $T$ by applying $\jdt_{d+r_k}$ to $T_k$ and increasing all entries in $T$ with value less than $d+r_k$ by one, leaving every other cell untouched. Similarly, for each $j \in [r_k-1]$, we can compute $S_{d+j}(T^{(j)})$ by applying $\jdt_{d+r_k}$ to $T_{k}^{(j)}$ and increasing all entries with value less than $d+r_k$ by one. Using Remark \ref{partialprr}, we can completely characterize $T^{(r_k)}$. 
Column $i+1$ of $T_k$ has been removed as a consequence of continuously applying $\jdt_{d+r_k}$ for $r_k$ times. Column $1$ to column $i$ of $T_k$ gets shifted to the right by one cell. All entries of $T$ with values less than $d+1$ are increased by $r_k$, and every other cell is fixed since none of the transpositions applied have effects on them. Finally, the first column of $T_k^{(r_k)}$ and $T_k^{(n-d)}$ agree by Lemma \ref{partialpr}.

We continue by applying $S_{d+r_k}$ to $T^{(r_k)}$. Similar to the previous sequence of swaps, $T^{(r_k+1)}$ can be obtained from $T^{(r_k)}$ by the followings steps. Apply $\jdt_{d+r_k+r_{k-1}}$ to $T_{k-1}^{(r_k)}$, and increase all entries with values in $(r_k,d+r_k+r_{k-1})$ by one, then add $r_k+1$ to the first entry of the $(k-1)$'st factor, and leave every other cell untouched. Similarly, for each $j \in [r_k,r_k+r_{k-1}-1]$, we can compute $S_{d+j}(T^{(j)})$ by applying $\jdt_{d+r_k+r_{k-1}}$ to $T_{k-1}^{(j)}$ and increasing all entries with value between $r_k$ and $d+r_k+r_{k-1}$ by one, and adding $r_k+1$ to the first entry of the $(k-1)$'st factor. Thus, we can characterize $T^{(r_k+r_{k-1})}$ as follows. Column $i+1$ of $T^{(r_k)}_{k-1}$ has been removed as a consequence of applying $\jdt_{d+r_k+r_{k-1}}$ for $r_{k-1}$ times. Column $1$ to column $i$ of $T^{(r_k)}_{k-1}$ gets shifted to the right by one cell. All entries of $T^{(r_k)}$ with values between $r_k$ and $d+r_k+1$ are increased by $r_{k-1}$, and every other cell is fixed since none of the transpositions applied have effects on them. Finally, the first columns of $T_{k-1}^{(r_k+r_{k-1})}$ and $T_k^{(r_k+r_{k-1})}$ agree with that of $T_{k-1}^{(n-d)}$ and $T_k^{(n-d)}$.

Similar argument follows for applying any intervals of $S_{d+(\sum_{h_0}^kr_i) -1}\cdots S_{d+1}S_d$, and we can completely characterize $T^{\left(\sum^k_{h_0}r_i\right)}$. Column $i+1$ of $T$ has been removed. Column $1$ to column $i$ of $T_{h_0},\cdots, T_k$ gets shifted to the right by one cell. All entries with values less than $d+1$ are increased by $\sum^k_{h_0}r_i$, and every other cell is fixed. Finally, match the first column of $T^{(\sum^k_{h_0}r_i)}$ exactly as $T^{(n-d)}$. The path $T\rightarrow T^{(3)}\rightarrow T^{(6)}\rightarrow T^{(8)}$ in Example \ref{ex22} $(a)$ shows an example when $n=22, d=8, h_0=1, k=3$ and $r_1+r_2+r_3=8$. Similarly, for any $\ell\in [0,s_k-i-1]$, we can obtain $T^{(\sum_{j=h_0}^{h_\ell}\sum_{i=j}^k r_i)}$ as follows. Take $T^{(\sum_{j=h_0}^{h_{\ell-1}}\sum_{i=j}^k r_i)}$, remove column $i+\ell+1$ of it, and for its $h_\ell$-th,$\cdots$, $k$-th factors, shift column $1+\ell$ to column $i+\ell$ to the right by one cell. Then add $\sum^k_{h_\ell}r_i$ to all entries (after column $\ell$) whose values are less than the smallest entry of the removed column $i+\ell+1$. Finally, match the ($\ell+1$)-th column of the $h_\ell$-th,$\cdots$, $k$-th factors exactly as $T^{(n-d)}$. Note that everything before column $1+\ell$ has been matched with $\pr^{n-d}(T)$ and will be fixed throughout the algorithm. We can eventually obtain $\pr^{n-d}(T)$ by inductively applying through all $\ell$. See Example \ref{ex22} $(a)$.

The argument for $m\neq 0$ is similar so we omit the details. Example \ref{ex22} $(b)$ shows a case when $m=2$.
\end{proof}

\begin{remark}
    For $T=\mathsf{row}_m(R)$, the statement in Proposition \ref{monster} is also true for any \\$d\in \{a_{m,1},a_{m,2},\cdots, a_{m,m-1}\}$, and the proof follows similarly. For our purposes, knowing $a_{m,m}$ works is sufficient.
\end{remark}

We illustrate the technique in Proposition \ref{monster} with the following examples.
\begin{example} \label{ex22}
$(a):$ Let $T=T_1\otimes T_2\otimes T_3=\mathsf{col}(R)\in \mathcal{T}((2^2),(3^3),(3^3))$. We choose $d=8$, which is the largest entry in column $1$ of $T$. We consider partitioning part of the sequence $S_{21}S_{20}\cdots S_8$ according to the heights of rectangles with more than one column. That is, $S_{15}S_{14},S_{13}S_{12}S_{11},S_{10}S_9S_8$. Applying them on $T$ from right to left, we obtain $T^{(8)}$ as follows. Remove the second column of $T$ and shift the first column of $T$ to the right by one cell, and also fix the third column. Then add $r_3+r_2+r_1=8$ to every entry with a value less than $d+1=9$. Finally, add back the first column exactly as $T^{(14)}$. To continue, we have intervals of words $S_{21}S_{20}S_{19}, S_{18}S_{17}S_{16}$. Applying them to $T^{(8)}$ from right to left we obtain $T^{(14)}$ as follows. Remove the third column of $T^{(8)}$, and shift the second column of $T^8_2$ and $T^8_3$ to the right by one cell. Then add $r_2+r_3=6$ to every entry (after column $1$) with a value less than $17$ (the smallest entry of column $3$). Finally, fill the empties in the second column exactly as $T^{(14)}$. The path from $T$ to $\pr^{14}(T)$ is shown in Figure \ref{figpr14}.

$(b):$ Let $T=T_1\otimes T_2\otimes T_3=\mathsf{row}_2(R)\in \mathcal{T}(2^2,3^3,3^3)$. We choose $d=a_2=16$. We divide the sequence of words $S_{21}S_{20}\cdots S_{16}$ into intervals according to the heights of rectangles with two rows removed. That is, all heights are $1$. Then we apply $S_{21},S_{20},\cdots S_{16}$ from right to left to obtain $T^{(1)},T^{(2)},\cdots,T^{(6)}=\pr^6(T)$. The path from $T$ to $\pr^6(T)$ is shown in Figure \ref{figpr6}.
\end{example}

\begin{figure}[htb] \centering
\begin{align*}
T=\tableau{7&15\\8&16}\otimes \tableau{4&12&20\\5&13&21\\6&14&22}\otimes \tableau{1&9&17\\2&10&18\\3&11&19}&\xrightarrow{\text{$S_{10}S_9S_8(T)$}}T^{(3)}=\tableau{10&15\\11&16}\otimes \tableau{7&12&20\\8&13&21\\9&14&22}\otimes \tableau{\textcolor{red}{1}&4&17\\\textcolor{red}{2}&5&18\\\textcolor{red}{3}&6&19} \\
&\xrightarrow{\text{$S_{13}S_{12}S_{11}(T^{(3)})$}}T^{(6)}=\tableau{13&15\\14&16}\otimes \tableau{\textcolor{red}{4}&10&20\\\textcolor{red}{5}&11&21\\\textcolor{red}{6}&12&22}\otimes \tableau{\textcolor{red}{1}&7&17\\\textcolor{red}{2}&8&18\\\textcolor{red}{3}&9&19} \\
&\xrightarrow{\text{$S_{15}S_{14}(T^{(6)})$}}T^{(8)}=\tableau{\textcolor{red}{7}&15\\\textcolor{red}{8}&16}\otimes \tableau{\textcolor{red}{4}&12&20\\\textcolor{red}{5}&13&21\\\textcolor{red}{6}&14&22}\otimes \tableau{\textcolor{red}{1}&9&17\\\textcolor{red}{2}&10&18\\\textcolor{red}{3}&11&19} \\
&\xrightarrow{\text{$S_{18}S_{17}S_{16}(T^{(8)})$}}T^{(11)}=\tableau{\textcolor{red}{7}&18\\\textcolor{red}{8}&19}\otimes \tableau{\textcolor{red}{4}&15&20\\\textcolor{red}{5}&16&21\\\textcolor{red}{6}&17&22}\otimes \tableau{\textcolor{red}{1}&\textcolor{red}{9}&12\\\textcolor{red}{2}&\textcolor{red}{10}&13\\\textcolor{red}{3}&\textcolor{red}{11}&14} \\
&\xrightarrow{\text{$S_{21}S_{20}S_{19}(T^{(11)})$}}T^{(14)}=\tableau{\textcolor{red}{7}&21\\\textcolor{red}{8}&22}\otimes \tableau{\textcolor{red}{4}&\textcolor{red}{12}&18\\\textcolor{red}{5}&\textcolor{red}{13}&19\\\textcolor{red}{6}&\textcolor{red}{14}&20}\otimes \tableau{\textcolor{red}{1}&\textcolor{red}{9}&15\\\textcolor{red}{2}&\textcolor{red}{10}&16\\\textcolor{red}{3}&\textcolor{red}{11}&17}
\end{align*}
\caption{The path from $T$ to $\pr^{14}(T)$ for $T=\mathsf{col}(R)\in \mathcal{T}((2^2),(3^3),(3^3)).$}
\label{figpr14}
\end{figure}

\begin{figure}[htb] \centering
\begin{align*}
T=\tableau{1&2\\9&10}\otimes \tableau{3&4&5\\11&12&13\\18&20&22}\otimes \tableau{6&7&8\\14&15&16\\17&19&21}
&\xrightarrow{\text{$S_{16}(T)$}}T^{(1)}=\tableau{2&3\\10&11}\otimes \tableau{4&5&6\\12&13&14\\18&20&22}\otimes \tableau{\textcolor{red}{1}&8&9\\7&16&17\\15&19&21} \\
&\xrightarrow{\text{$S_{17}(T^{(1)})$}}T^{(2)}=\tableau{3&4\\11&12}\otimes \tableau{\textcolor{red}{2}&6&7\\5&14&15\\13&20&22}\otimes \tableau{\textcolor{red}{1}&9&10\\8&17&18\\16&19&21} \\
&\xrightarrow{\text{$S_{18}(T^{(2)})$}}T^{(3)}=\tableau{4&5\\12&13}\otimes \tableau{\textcolor{red}{2}&7&8\\6&15&16\\14&20&22}\otimes \tableau{\textcolor{red}{1}&\textcolor{red}{3}&11\\9&10&19\\17&18&21} \\
&\xrightarrow{\text{$S_{19}(T^{(3)})$}}T^{(4)}=\tableau{5&6\\13&14}\otimes \tableau{\textcolor{red}{2}&\textcolor{red}{4}&9\\7&8&17\\15&16&22}\otimes \tableau{\textcolor{red}{1}&\textcolor{red}{3}&12\\10&11&20\\18&19&21} \\
&\xrightarrow{\text{$S_{20}(T^{(4)})$}}T^{(5)}=\tableau{6&7\\14&15}\otimes \tableau{\textcolor{red}{2}&\textcolor{red}{4}&10\\8&9&18\\16&17&22}\otimes \tableau{\textcolor{red}{1}&\textcolor{red}{3}&\textcolor{red}{5}\\11&12&13\\19&20&21} \\
&\xrightarrow{\text{$S_{21}(T^{(5)})$}}T^{(6)}=\tableau{7&8\\15&16}\otimes \tableau{\textcolor{red}{2}&\textcolor{red}{4}&\textcolor{red}{6}\\9&10&11\\17&18&19}\otimes \tableau{\textcolor{red}{1}&\textcolor{red}{3}&\textcolor{red}{5}\\12&13&14\\20&21&22}
\end{align*}
\caption{The path from $T$ to $\pr^6(T)$ for $T=\mathsf{row}_2(R)\in \mathcal{T}((2^2),(3^3),(3^3)).$}
\label{figpr6}
\end{figure}

\begin{lemma} \label{l:enoughprpowers}
We have that $\gcd\left(\bigcup_m\{c_{m,1},\ldots, c_{m,s_k}, a_{m,m} \}\right) = d_{R}$.
\end{lemma}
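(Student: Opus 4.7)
The plan is to express every element of the set as an explicit $\ZZ$-linear combination of the multiplicities $\mu_1, \ldots, \mu_\ell$ of the distinct rectangular shapes occurring in $R$, and then recover each $\mu_j$ individually by iterated finite differences; the two-sided divisibility then forces $\gcd = d_R$.

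First, I would write the distinct shapes as $Q_1, \ldots, Q_\ell$, with $Q_j$ a $\rho_j \times \sigma_j$ rectangle of multiplicity $\mu_j$, so that $d_R = \gcd(\mu_1, \ldots, \mu_\ell)$. A direct count of cells filled in the row- and column-phases of Definition~\ref{mstdcol} yields the closed forms
\[
 a_{m,m} = \sum_{j=1}^{\ell} \mu_j \min(m, \rho_j)\, \sigma_j, \qquad
 c_{m,i} = a_{m,m} + \sum_{j=1}^{\ell} \mu_j (\rho_j - m)_+ \min(i, \sigma_j),
\]
where $(x)_+ := \max(x, 0)$ and the formula for $c_{m,i}$ reflects that the last cell of column $i$ in the column-phase is the leftmost rectangle of height $>m$ and width $\geq i$. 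Each right-hand side is $\ZZ$-linear in the $\mu_j$, so $d_R$ divides every element of the set, giving $d_R \mid \gcd$.

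For the reverse divisibility, I plan to iterate finite differences four times. Using the natural boundary conventions that make empty sums vanish (namely $c_{m,0} := a_{m,m}$, $B_{r',i} := 0$, $C_{m,s_k+1} := 0$, and $D_{r',i} := 0$), set
\begin{align*}
 B_{m,i} &:= c_{m,i} - c_{m,i-1} = \sum_{\sigma_j \geq i} \mu_j (\rho_j - m)_+, \\
 C_{m,i} &:= B_{m,i} - B_{m+1,i} = \sum_{\sigma_j \geq i,\ \rho_j > m} \mu_j, \\
 D_{m,i} &:= C_{m,i} - C_{m,i+1} = \sum_{\sigma_j = i,\ \rho_j > m} \mu_j, \\
 E_{m,i} &:= D_{m-1,i} - D_{m,i} = \sum_{\sigma_j = i,\ \rho_j = m} \mu_j.
\end{align*}
Because a rectangle is determined by its pair of dimensions, the last sum contains at most one term, so each $E_{m,i}$ is either $0$ or precisely the multiplicity $\mu_{j_0}$ of the unique shape $Q_{j_0}$ of dimensions $(m,i)$. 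Letting $(m,i)$ range over the dimensions of the shapes actually appearing in $R$ then exhibits each $\mu_j$ as an explicit $\ZZ$-combination of elements of the set, so $\gcd \mid d_R$, which combined with the first direction finishes the proof.

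The main technical nuisance, rather than a real obstacle, is fixing the precise ranges of $(m,i)$ for which each intermediate quantity is defined and verifying that every telescoped difference lies in the $\ZZ$-ideal generated by the stated set; this amounts to routine arithmetic once the boundary conventions above are declared explicitly.
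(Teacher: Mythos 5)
Your proof is correct and follows essentially the same route as the paper's: both express $a_{m,m}$ and $c_{m,i}$ as nonnegative integer combinations of the shape multiplicities (giving $d_R \mid \gcd$) and then recover each multiplicity by iterated finite differences (giving $\gcd \mid d_R$); your version simply carries the differencing two steps further, making explicit the step the paper dismisses as ``clear.'' The only caveat, shared with the paper's own argument, is that your closed form for $c_{m,i}$ interprets it as the last entry placed in column $i$ during the column-filling phase, which agrees with the stated definition (``maximum entry in column $i$'') only when some rectangle of width at least $i$ has height greater than $m$.
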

\begin{proof}
Let $g_R$ denote the $\gcd$ of the above set. It's clear that $g_R$ is a multiple of $d_R$. Now, note that for $i > 1$, $c_{j,i+1} - c_{j,i}$ equals the number of cells in column $i$ with row index greater than $j$. If $i = 1$, then $c_{j,1} - a_{j,j}$ has the same interpretation. Furthermore, the expression 
\[
\begin{cases}(c_{j+1, i+1} - c_{j+1, i}) - (c_{j, i+1} - c_{j, i}) & i > 1\\
(c_{j+1, 1} - a_{j+1, j+1}) - (c_{j, 1} - a_{j, j}) & i = 1
\end{cases}
\]
gives the number of cells across $R$ with row index $i$ and column index $j$. It's clear that the $\gcd$ of the multiplicities of each \emph{cell} across $R$ must be $d_R$, so it follows that $g_R = d_R$ as desired.
\end{proof}

Now, we show that every $T \in \cal{T}(R)$ can be connected to $\pr^j(\row(R))$ for some $j$. For each $i$, let $a_i$ be the number of cells across all $R_j$ with row index at most $i$ (by convention, let $a_0 = 0$.). Given $T \in \cal{T}(R)$, we want to find a path from $T$ to $\pr^j (\mathsf{row}(R))$ for some $j$. Equivalently, we want to find a sequence of moves that are either (1) a dual equivalence move or (2) a power of promotion, that brings $T$ to $\mathsf{row}(R)$. To this end, we define the \emph{row-combing} algorithm, $\mathsf{rcomb}$. When each $R$ consists of single row partitions, this algorithm essentially reduces to the Shi combing procedure (see \cite{shi91,chmutovpylyavskyyyudovina18}).

First, we define a \emph{partial comb}: Given $T \in \cal{T}(R)$, and $i < j$, such that $D(T) \cap \{i, \ldots, j\} = \{j\}$, let $\mathsf{pcomb}_{i,j}(T)$ be the filling obtained by swapping the descent at $j$ for a descent at $j-1$, which is then swapped for $j-2$, and so on, up to $i$. This results in a new filling $T'$ with $D(T') \cap \{i, \ldots, j\} = \{i\}$. We note that depending on $T$, $\mathsf{pcomb}_{i,j}$ may affect whether or not $i-1$ or $j+1$ are descents as well. We are now ready to define $\mathsf{rcomb}$:
\begin{itemize}
    \item Input: $T \in \cal{T}(R)$.
    \item Output: $T' \in \cal{T}(R)$ that is ``closer'' to $\mathsf{row}(R)$.
    \item Perform the following moves:
    \begin{itemize}
        \item Let $j$ be the minimal element of $D(T) \setminus \{a_i\}$, and suppose $a_i < j < a_{i+1}$.
        \item $T \leftarrow \mathsf{pcomb}_{a_i+1, j}(T)$.
        \item For $\ell = i-1, i-2, \ldots, 0$: $T \leftarrow \mathsf{pcomb}_{a_\ell + 1, a_{\ell+1}}(T)$
        \item $T \leftarrow \pr^{-1}(T)$
    \end{itemize}
    \item Output $T$
\end{itemize}

\begin{example}
We demonstrate one iteration of $\mathsf{rcomb}$ for $R = (3^3, 3^3, 2^2)$. Here, $j = 10$.
\begin{align*}
    T = \tableau{1&2&3\\9&10&15\\11&16&20} \otimes \tableau{4&5&6\\13&18&21\\14&19&22} \otimes \tableau{7&8\\12&17} \xrightarrow{\mathsf{pcomb_{9,10}}}\ &\tableau{1&2&3\\9&11&15\\10&16&20} \otimes \tableau{4&5&6\\13&18&21\\14&19&22} \otimes \tableau{7&8\\12&17} \\
    \xrightarrow{\mathsf{pcomb_{1,8}}}\ &
    \tableau{1&3&4\\2&11&15\\10&16&20} \otimes \tableau{5&6&7\\13&18&21\\14&19&22} \otimes \tableau{8&9\\12&17} \\
    \xrightarrow{\pr^{-1}}\ & \tableau{1&2&3\\9&10&14\\15&19&22}\otimes \tableau{4&5&6\\12&17&20\\13&18&21} \otimes \tableau{7&8\\11&16}
\end{align*}
Note that the entries $1$ through $10$ are already where they should be in $T$, and that the first cell (reading left to right) that disagrees with $\mathsf{row}(R)$ is decreased by one $(15 \to 14)$.
\end{example}

\begin{lemma} \label{l:pcomb}
Let $T \in \cal{T}(R)$, and fix $i \leq j$. Suppose that $D(T) \cap \{i, \ldots, j\} = \{j\}$, and that the cells of $T$ containing $\{i,\ldots, j+1\}$ are contained with at most two rows of the factors of $T$. Let $c$ be the smallest value in $\{i,\ldots, j+1\}$ that follows $j+1$ in $\rw(T)$. Then $\mathsf{pcomb}_{i,j}(T)$ has the affect of cyclically shifting the cells containing $c+1, \ldots, j+1$ up by one (each cell gets increased by $1$, except for $j+1$ which wraps around to $c+1$), and cyclically shifting the cells containing $i,\ldots, c$ up by one.
\end{lemma}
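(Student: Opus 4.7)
The plan is to describe explicitly the sequence of KR DEG edges that $\mathsf{pcomb}_{i,j}$ applies, track how those edges move cell positions, and compose at the end. First, from the hypothesis $D(T) \cap \{i,\ldots,j\} = \{j\}$ I deduce the $\rw$-order of the entries $\{i, i+1, \ldots, j+1\}$: the absence of descents at $i, i+1, \ldots, j-1$ forces $i, i+1, \ldots, j$ to appear in increasing order, and the descent at $j$ forces $j+1$ to sit strictly to the left of $j$, so by the definition of $c$ the order is $i, i+1, \ldots, c-1, j+1, c, c+1, \ldots, j$. Writing $p_v$ for the cell of $T$ holding entry $v$, the 2-row hypothesis then pins down which row each $p_v$ lies in: $p_i, \ldots, p_{c-1}, p_{j+1}$ lie in one of the two rows (the ``bottom'' one in each relevant factor, since it is read first in $\rw$), and $p_c, \ldots, p_j$ in the other.

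The main claim I want to prove is that $\mathsf{pcomb}_{i,j}$ consists of two phases: a Phase~1 of length $j - c$ applying the edges $t_j, t_{j-1}, \ldots, t_{c+1}$ in order, followed by a Phase~2 of length $c - i$ applying $t_{c-1}, t_{c-2}, \ldots, t_i$. At step $k$ of Phase~1, the edge $t_{j-k+1}$ swaps entry $j-k+1$ (still at its original cell $p_{j-k+1}$) with entry $j-k+2$ (which the previous step has pushed into $p_{j+1}$); at step $k'$ of Phase~2, $t_{c-k'}$ swaps $c-k'$ at $p_{c-k'}$ with $c-k'+1$ which has been pushed into $p_c$. I would verify these claims inductively, checking at each step that (a) the two entries being swapped lie in distinct rows and columns, so the swap produces a valid SSYT, (b) the resulting descent sets are incomparable, so the swap is a KR DEG edge, and (c) the descent moves by exactly one in the prescribed direction. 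The combination of the 2-row and single-descent hypotheses with the configuration produced by the previous step makes these verifications mechanical. Composing all $j - i$ swaps then yields exactly the two disjoint cyclic shifts in the statement: cells $p_{c+1}, \ldots, p_j, p_{j+1}$ undergo a cyclic rotation in which each entry is incremented by one and $p_{j+1}$ wraps to $c+1$, and cells $p_i, \ldots, p_{c-1}, p_c$ likewise cycle with $p_c$ wrapping to $i$.

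The main obstacle is the transition from Phase~1 to Phase~2. A naive continuation at step $j-c+1$ would use $t_c$, but tracking the configuration after Phase~1 shows that $t_c$ would sort the entire block $\{i, \ldots, j+1\}$ into $\rw$-increasing order, leaving $D \cap \{i, \ldots, j\}$ empty; the old descent set $\{c\}$ then strictly contains the new one, violating the incomparability requirement, so $t_c$ is not a valid KR DEG edge. Consequently the unique valid edge at this step is $t_{c-1}$, which is what forces the phase change and explains the switch in the direction of $v$. A cleaner alternative I would consider is induction on $j - i$: the first swap $t_j$ produces a tableau $T^{(1)}$ in which the analogue of $c$ is preserved (the new smallest value of $\{i,\ldots,j\}$ following $j$ in $\rw(T^{(1)})$ equals the original $c$), the 2-row and single-descent hypotheses persist for $\mathsf{pcomb}_{i,j-1}$ on $T^{(1)}$, and composing the explicit effect of the first swap with the inductive conclusion yields the two cyclic shifts directly, at the cost of a little reindexing but with no case split at the phase boundary.
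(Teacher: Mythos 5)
Your proposal follows essentially the same route as the paper's proof: reduce to the restricted reading word $i\,(i+1)\cdots(c-1)\,(j+1)\,c\cdots j$, apply the swaps $t_j,\ldots,t_{c+1}$ and then $t_{c-1},\ldots,t_i$, observe that $t_c$ fails descent--incomparability (which is exactly what forces the phase change), and compose the swaps to obtain the two cyclic shifts. Your attention to verifying that each swap is a genuine edge is a point the paper treats tersely, and your alternative induction on $j-i$ is a cosmetic repackaging of the same computation.

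One intermediate claim in your argument is false as stated, however: it is not true in general that $p_i,\ldots,p_{c-1},p_{j+1}$ all lie in one of the two rows and $p_c,\ldots,p_j$ in the other. Since $\rw(T)=\rw(T_1)\cdots\rw(T_k)$ concatenates the factors, a cell can follow $p_{j+1}$ in reading order simply by lying in a later factor, irrespective of which of the two rows it occupies. For instance, take $R=((2^2),(2^2))$, $n=8$, $T=\tableau{1&4\\2&7}\otimes\tableau{3&6\\5&8}$, $i=4$, $j=6$: then $D(T)\cap\{4,5,6\}=\{6\}$, the restricted word is $7\,4\,5\,6$, so $c=4$, yet $p_5$ lies in the second row while $p_4$ and $p_6$ lie in the first. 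You invoke this row placement only to certify that each swap produces a valid filling, and that certification can be salvaged without it: the two cells exchanged at any non-excluded step are never adjacent in a row or column of a single factor, because horizontal adjacency would force them to be consecutive in $\rw(T)$ (contradicting the restricted word's shape), and the only possible vertical adjacency involving $p_{j+1}$ is with the cell directly above it, which the reading-order argument identifies as $p_c$ --- precisely the swap both you and the paper exclude. With that repair, your argument goes through and coincides with the paper's.
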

\begin{proof}
This can be seen directly by noting that under the assumptions of the Lemma statement, $\rw(T)$ restricted to the alphabet $\{i, \ldots, j+1\}$ has the form 
\[
  i\ (i+1)\ \cdots\ (c-1)\ (j+1)\ c\ (c+1)\ \cdots\ (j-1)\ j
\]
We can apply dual Knuth moves swapping $j+1$ and $j$, followed by $j$ and $j-1$, and so on, until we swap $c+1$ and $c+2$, at which point, the row word looks like 
\[
  i\ (i+1)\ \cdots\ (c-1)\ (c+1)\ c\ (c+2)\ \cdots\ j\ (j+1)
\]
Since swapping $c, c+1$ is \emph{not} a dual Knuth move for this word, we continue swapping $c-1$ with $c$, $c-2$ with $c-1$, and so on, to get the word 
\[
  (i+1)\ (i+2)\ \cdots\ c\ (c+1)\ i\ (c+2)\ \cdots\ j\ (j+1)
\]
Since elementary dual equivalences on $\rw(T)$ correspond to edges in $\cal{T}(R)$, the claim follows.
\end{proof}
We note that under the assumptions of the above Lemma, if the cells containing $\{i,\ldots, j+1\}$ lie in two rows of $T$, then $j+1$ is the unique cell in the lower row, and $c$ is the cell directly above.
\begin{prop} \label{p:rcomb}
Repeatedly applying $\mathsf{rcomb}$ to any $T \in \cal{T}(R)$ will eventually reach $\mathsf{row}(R)$ after finitely many iterations.
\end{prop}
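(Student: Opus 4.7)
The plan is to exhibit a potential function on $\cal{T}(R)$ that strictly improves under every application of $\mathsf{rcomb}$ and is uniquely maximized at $\mathsf{row}(R)$. Since $\cal{T}(R)$ is finite, termination at $\mathsf{row}(R)$ will then follow immediately.

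Label the cells of $R$ as $c_1,\ldots,c_n$ so that $c_k$ is the cell at which $\mathsf{row}(R)$ has entry $k$. For $T\in\cal{T}(R)$, let $k(T)$ be the smallest index $k$ with $T[c_k]\neq k$, or $n+1$ if no such index exists, and set $\phi(T):=(k(T),-T[c_{k(T)}])$, ordered lexicographically (with $T[c_{n+1}]:=0$ by convention). Prefix agreement $T[c_1]=1,\ldots,T[c_{k(T)-1}]=k(T)-1$ forces $T[c_{k(T)}]>k(T)$ whenever $k(T)\leq n$, so $\phi$ ranges over a finite set and is maximized precisely when $T=\mathsf{row}(R)$. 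It thus suffices to show $\phi(\mathsf{rcomb}(T))>\phi(T)$ for every $T\neq\mathsf{row}(R)$.

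For the strict improvement I would first observe that, because the prefix $c_1,\ldots,c_{k(T)-1}$ already matches $\mathsf{row}(R)$, the descents of $T$ strictly below $k(T)-1$ agree with those of $\mathsf{row}(R)$, which are exactly the $a_\ell$ falling in that range. Consequently the minimal $j\in D(T)\setminus\{a_1,a_2,\ldots\}$ selected by $\mathsf{rcomb}$ satisfies $j\geq k(T)-1$, and each partial comb invoked inside the algorithm acts only on cells whose current entries lie in an interval bounded above by $j+1$. Applying Lemma \ref{l:pcomb} to describe each $\mathsf{pcomb}$ as an explicit pair of cyclic rotations on prescribed cells, and Proposition \ref{p:pr_automorphism} to track how $\pr^{-1}$ shifts entries and descents, one can compose the whole sequence and verify that the net effect fixes $c_1,\ldots,c_{k(T)-1}$ pointwise while strictly lowering the entry at $c_{k(T)}$. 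Writing $T'=\mathsf{rcomb}(T)$, this produces either $k(T')>k(T)$ (when the entry at $c_{k(T)}$ drops down to $k(T)$) or $k(T')=k(T)$ with $T'[c_{k(T)}]<T[c_{k(T)}]$, and in either case $\phi$ strictly increases.

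The main technical obstacle will be this cell-by-cell bookkeeping. The nested partial combs $\mathsf{pcomb}_{a_i+1,j},\mathsf{pcomb}_{a_{i-1}+1,a_i},\ldots,\mathsf{pcomb}_{1,a_1}$ rotate overlapping blocks of entries, and one must carefully check that their composition with the final $\pr^{-1}$ acts as the identity on the already-correct prefix while strictly decreasing the entry at $c_{k(T)}$. Once this verification is in hand, $\phi$ strictly increases at every application of $\mathsf{rcomb}$, and finiteness of $\cal{T}(R)$ forces the iteration to reach $\mathsf{row}(R)$ in finitely many steps.
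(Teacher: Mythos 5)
Your argument is essentially the paper's: the paper's proof consists exactly of the claim that $\mathsf{rcomb}$ fixes every cell preceding the first cell $\alpha$ that disagrees with $\mathsf{row}(R)$ and strictly decreases the entry of $\alpha$, which is precisely your lexicographic potential $\phi$ made explicit (the paper leaves the monovariant implicit but uses the same two cases, ``the entry at $c_{k(T)}$ drops to $k(T)$'' versus ``it decreases but stays too large''). Your preliminary observations are also sound: since the prefix matches $\mathsf{row}(R)$, the descents of $T$ below $k(T)-1$ are exactly the $a_\ell$ in that range, so the $j$ chosen by $\mathsf{rcomb}$ satisfies $j \ge k(T)-1$.

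The one reservation is that you explicitly defer the entire substantive step --- verifying that the composition of the nested partial combs with the final $\pr^{-1}$ fixes $c_1,\dots,c_{k(T)-1}$ pointwise and strictly lowers the entry at $c_{k(T)}$ --- and this deferred ``bookkeeping'' is where all of the content of the paper's proof lives. For comparison, the paper discharges it as follows: (i) no $\mathsf{pcomb}$ can increase the entry of $\alpha$, since the only comb that can touch $\alpha$ is $\mathsf{pcomb}_{a_i+1,j}$, and there $\alpha$ can only play the role of the cell $c$ or of $j+1$ in Lemma \ref{l:pcomb}, both of which decrease; (ii) after all the combs, every cell with value less than that of $\alpha$ has increased by exactly $1$, except in the single distinguished factor containing the cell that played the role of $c$ in the first comb, whose first $i$ rows form an explicit configuration with $1$ in its northwest corner; (iii) the final $\pr^{-1}$ subtracts $1$ from every entry in the other factors, while in the distinguished factor its jeu de taquin path runs down the first column, restoring all prefix entries there and decreasing $\alpha$ if it lies in that factor. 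If your proof is to stand on its own, this verification must actually be carried out; as written, the proposal is a correct skeleton with the key claim asserted rather than proved.
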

\begin{proof}
In order to prove that repeated applications of $\mathsf{rcomb}$ eventually lead to $\mathsf{row}(R)$, we will use the following strategy. Consider the cells of $T$ that have values strictly larger than their corresponding value in $\mathsf{row}(R)$, and let $\alpha$ be the one with the smallest value. We claim that $\mathsf{rcomb}(T)$ fixes all entries of $T$ that precede $\alpha$, and decreases the value of $\alpha$ itself. 

First, it is clear that no application of $\mathsf{pcomb}$ will increase the value of $\alpha$. Indeed, the only step of the algorithm that could interact with $\alpha$ is the application of $\mathsf{pcomb}_{a_i + 1, j}$, but this will only happen if the value of $\alpha$ is the $c$ of Lemma \ref{l:pcomb}, or $j+1$. In either case, it will decrease, not increase.

After applying each $\mathsf{pcomb}$, the cycling of entries described in Lemma \ref{l:pcomb} implies that the cells less than $\alpha$ will have increased by $1$, except for a single factor of $T$ (the factor where the initial cell that played the role of $c$ in $\mathsf{pcomb}_{a_i+1, j}$ is located). The first $i$ rows of $T$ will look like

\[
\ytableausetup{boxsize=6ex}
\begin{ytableau}
    \scriptstyle{1}& \scriptstyle{\ell_1 + 2} & \scriptstyle{\ell_1 + 3}& \cdots &  \scriptstyle{\ell_1 + h} \\
    \scriptstyle{\ell_1 + 1} & \scriptstyle{\ell_2 + 2} & \scriptstyle{\ell_2 + 3} &\cdots & \scriptstyle{\ell_2 + h} \\
    \scriptstyle{\ell_2+ 1} & \vdots & \vdots & \vdots  & \vdots \\
    \vdots & \scriptstyle{\ell_{i}+2}& \cdots &\alpha &\\
    \scriptstyle{\ell_{i} +1} & & & &
\end{ytableau}
\]
where $\ell_i$ is the value of the leftmost cell in that row in $\row(R)$. 
Since $1$ is always in this factor of $T$, the effect of $\pr^{-1}$ on the other factors will be to decrease all elements by $1$. For this factor, it is clear that the $\jdt$ path will go down the left column of the tableau up to at least row $i$. This will return all values in this factor that were originally less than $\alpha$ back to what they were. If $\alpha$ happens to be in this factor, then it will be in row $i$ but not column one, so $\pr^{-1}$ will just decrease the value by $1$.
\end{proof}

\begin{example}
Let $R = ((2^2),(2^2))$, then the following is a complete application of the row combing procedure. 
\[T = \tableau{1&2\\3&6} \otimes \tableau{4&5\\7&8}\xrightarrow{\mathsf{rcomb}} \tableau{1&2\\5&8} \otimes \tableau{3&4\\6&7}\xrightarrow{\mathsf{rcomb}} \tableau{1&2\\5&6} \otimes \tableau{3&4\\7&8} = \row(R).\]
\end{example}

\begin{cor}\label{corprj}
    Any vertex $T\in \mathcal{T}(R)$ can be connected to $\pr^j(\mathsf{row}(R))$ for some power $j$.
\end{cor}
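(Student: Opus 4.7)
The plan is to unpack one step of $\mathsf{rcomb}$ and then iterate, using Proposition \ref{p:pr_automorphism} to transport connectedness through applications of promotion.

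First I would observe that by its very definition, a single application of $\mathsf{rcomb}$ to $T \in \mathcal{T}(R)$ has two parts: a composition of $\mathsf{pcomb}$ operations, each of which (by Lemma \ref{l:pcomb}) is realized by a sequence of ordinary $t_i$ edges in $\mathcal{T}(R)$, followed by one application of $\pr^{-1}$. Writing $\widetilde{T}$ for the intermediate tableau produced after all the $\mathsf{pcomb}$ steps but before the final $\pr^{-1}$, we thus have $\mathsf{rcomb}(T) = \pr^{-1}(\widetilde{T})$, and $T$ is connected to $\widetilde{T} = \pr(\mathsf{rcomb}(T))$ by a path of edges in $\mathcal{T}(R)$.

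Next I would iterate. Let $T_0 := T$ and $T_{i+1} := \mathsf{rcomb}(T_i)$. By Proposition \ref{p:rcomb}, there exists $k \geq 0$ with $T_k = \mathsf{row}(R)$. From the previous step, each $T_i$ is connected in $\mathcal{T}(R)$ to $\pr(T_{i+1})$. Since $\pr$ is a graph automorphism of $\mathcal{T}(R)$ (Proposition \ref{p:pr_automorphism}), applying $\pr^i$ to that path shows $\pr^i(T_i)$ is connected to $\pr^{i+1}(T_{i+1})$. Concatenating for $i = 0, 1, \ldots, k-1$ yields a path from $T = \pr^0(T_0)$ to $\pr^k(T_k) = \pr^k(\mathsf{row}(R))$, so $T$ is connected to $\pr^j(\mathsf{row}(R))$ with $j := k$.

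There is essentially no technical obstacle beyond bookkeeping: the nontrivial work is already contained in Proposition \ref{p:rcomb} (termination of the algorithm), Lemma \ref{l:pcomb} (realizing $\mathsf{pcomb}$ by edges), and Proposition \ref{p:pr_automorphism} (promotion preserves the graph structure). The only thing to be careful about is the telescoping argument in the previous paragraph, where the power of $\pr$ attached to $\mathsf{row}(R)$ equals the number of $\mathsf{rcomb}$ iterations required to combed $T$ into $\mathsf{row}(R)$.
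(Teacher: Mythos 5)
Your argument is correct and is exactly the reasoning the paper intends: the corollary is stated without a separate proof precisely because, as you observe, each $\mathsf{rcomb}$ step is a path of $t_i$ edges followed by $\pr^{-1}$, so Proposition \ref{p:rcomb} together with the automorphism property of $\pr$ (Proposition \ref{p:pr_automorphism}) gives the telescoping chain $T_0 \sim \pr(T_1) \sim \cdots \sim \pr^k(T_k)$. Your bookkeeping of the power $j=k$ is right, and nothing further is needed.
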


Now we are ready to combine all results and prove the main theorem. 
\begin{proof}[Proof of Theorem \ref{t:connectedcomponents}]
We know from Corollary \ref{c:cclowerbound} that $\cal{T}(R)$ has at least $d_R$ connected components. What remains to show is that any two $T,T'$ with equal charge are connected in $\cal{T}(R)$. By Corollary \ref{corprj}, it suffices to show that $\mathsf{row}(R)$ is connected to $\pr^{d_R}\mathsf{row}(R)$. Let $A = \{c_{i,j} \mid i\leq r', j \leq s_k\} \cup \{a_{i,i} \mid i \leq r'\}$. We know by Proposition \ref{monster} that for every $d_i$ in $A$, there is some $T \in \cal{T}(R)$ such that $T$ is connected to $\pr^{n - d_i}(T)$, and since $\pr^n = \text{id}$ on $\cal{T}(R)$, this implies that $T$ is also connected to $\pr^{d_i}(T)$. By Corollary \ref{corprj}, $T$ is connected to $\pr^{k_i}(\row(R))$ for some $k_i$. Since $\pr$ is a graph automorphism, this means that $\pr^{d_i}(T)$ is connected to $\pr^{k_i + d_i}(\row(R))$. Concatenating the paths, we get that $\pr^{k_i}(\row(R))$ is connected to $\pr^{k_i + d_i}(\row(R))$, which implies that $\row(R)$ is connected to $\pr^{d_i}(\row(R))$ for each $d_i$ in $A$. Since $\gcd(A) = d_R$ by Lemma \ref{l:enoughprpowers}, we can conclude that $\row(R)$ is connected to $\pr^{d_R}(\row(R))$, as desired.
\end{proof}

\section{Characters of connected components} \label{sec:characters}
Recall that, given a subset $A \subset [n-1]$, the associated fundamental quasisymmetric function $F_A$ is given by 
\[
F_A(x) = \sum_{\substack{i_1 \leq \cdots \leq i_n\\ a \in A \Rightarrow i_a < i_{a+1}}} x_{i_1}\cdots x_{i_n}.
\]
\begin{thm}
Let $R = (R_1,\ldots, R_k)$. For $T \in \cal{T}(R)$, let $\overline{D}(T) = D(T) \setminus\{n\}$, then
\[
\sum_{T \in \cal{T}(R)} F_{\overline{D}(T)} = s_{R_1}\cdots s_{R_k}
\]
where $F_A$ is the fundamental quasisymmetric function corresponding to $A \subset [n-1].$
\end{thm}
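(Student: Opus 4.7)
The plan is to reduce the identity to the classical standardization formula for products of Schur functions; no structural property of $\cal{T}(R)$ beyond its vertex set is needed. First, I would observe that because the vertices of $\cal{T}(R)$ are weight-$0$ elements of $B^{R_1}\otimes\cdots\otimes B^{R_k}$, and $n=|R_1|+\cdots+|R_k|$, each vertex $T=T_1\otimes\cdots\otimes T_k$ is a tuple of fillings of the $R_i$ that collectively use every letter of $[n]$ exactly once. For such a standard tuple, $\overline{D}(T)=D(T)\cap[n-1]$ is precisely the ordinary descent set of the row-reading word $\rw(T)=\rw(T_1)\cdots\rw(T_k)$, namely the set of $i\in[n-1]$ such that $i+1$ appears to the left of $i$; the potentially extra element $n\in D(T)$ plays no role on the right-hand side.

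Next, I would expand the product of Schur functions combinatorially as
\[
s_{R_1}\cdots s_{R_k}(x) \;=\; \sum_{(S_1,\ldots,S_k)} x^{S_1}\cdots x^{S_k},
\]
where the sum runs over tuples of semistandard tableaux $S_i\in\SSYT(R_i)$ with entries in any positive alphabet, and group the tuples by their joint standardization. Given $(S_1,\ldots,S_k)$, one produces a standard tuple $T^{*}=(T^{*}_1,\ldots,T^{*}_k)$ of the same shapes by relabeling the entries of $\rw(S_1\otimes\cdots\otimes S_k)$ with $1,2,\ldots,n$ so as to preserve weak order and break ties from left to right in the reading-word order. Such a $T^{*}$ is precisely a vertex of $\cal{T}(R)$, and the semistandard tuples standardizing to a fixed $T^{*}$ are in bijection with weakly increasing sequences $i_1\leq\cdots\leq i_n$ of positive integers in which the inequality $i_j<i_{j+1}$ is forced at exactly those positions $j\in\overline{D}(T^{*})$. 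Summing the monomials $x_{i_1}\cdots x_{i_n}$ over these sequences gives the fundamental quasisymmetric function $F_{\overline{D}(T^{*})}(x)$, and aggregating over $T^{*}$ yields the desired identity.

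The one point demanding care is the compatibility of the standardization procedure with the reading word: one must check that the positions at which strict increases are forced are exactly the classical descents of $\rw(T^{*})$. This is the same verification underlying the single-shape identity $s_\lambda = \sum_T F_{D(\rw(T))}$, summed over standard Young tableaux of shape $\lambda$, and the extension to tuples is routine provided the tie-breaking convention in the standardization is chosen consistently with the row-reading-word order used in Definition \ref{descentset}. Once this bookkeeping is fixed, the proof consists in matching the two expansions term by term.
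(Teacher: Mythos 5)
Your proof is correct, but it takes a genuinely different route from the paper's. The paper's argument is crystal-theoretic: it decomposes $B^{R_1}\otimes\cdots\otimes B^{R_k}$ as a type $A$ crystal into irreducible components $B_\lambda$, each occurring with multiplicity equal to the coefficient of $s_\lambda$ in $s_{R_1}\cdots s_{R_k}$, and then invokes Assaf's result that the $0$-weight space of $B_\lambda$, equipped with descent sets, has quasisymmetric generating function $s_\lambda$; summing over components gives the product. You instead argue directly via Gessel's standardization expansion: the vertices of $\cal{T}(R)$ are exactly the standard fillings of the disconnected skew shape obtained by placing $R_1,\ldots,R_k$ corner to corner, $\overline{D}(T)$ is precisely the classical descent set of that skew standard tableau read via $\rw(T)$, and grouping the semistandard tuples in the monomial expansion of $s_{R_1}\cdots s_{R_k}$ by their standardization produces one $F_{\overline{D}(T^*)}$ per vertex. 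Your approach is more elementary and self-contained, bypassing both the crystal decomposition and Assaf's theorem, at the cost of the bookkeeping you flag: checking that the left-to-right tie-breaking in reading-word order forces strict increases exactly at the descents of $\rw(T^*)$, which is the same verification as in the single-shape identity $s_\lambda=\sum_T F_{D(T)}$ and extends routinely to tuples. The paper's route buys a two-line proof given the cited results and makes transparent that the statement restricts correctly to each irreducible crystal component, which is thematically closer to the rest of the paper; your route makes the theorem visibly an instance of the classical quasisymmetric expansion of a (disconnected) skew Schur function.
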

\begin{proof}
Regard $B^{R_1} \otimes \cdots \otimes B^{R_k}$ as a type $A$ crystal. It decomposes into connected components, each of which is isomorphic to a type $A$ crystal $B_{\lambda}$, where the multiplicity of $\lambda$ is exactly the the multiplicity of $s_\lambda$ in $s_{R_1}\cdots s_{R_k}$ \cite[Theorem 9.5]{bumpschilling17}. Next, we know from \cite{assaf08crystals} that if we take the $0$-weight space of the type $A$ crystal $B_{\lambda}$ and regard it as a dual-equivalence graph, then the sum of fundamental quasisymmetric functions indexed by descents is the Schur function $s_\lambda$. The claim follows.
\end{proof}

If we consider the same sum of fundamental quasisymmetric functions restricted to a single connected component of $\cal{T}(R)$, we get a Schur positive symmetric function, since $\cal{T}(R)$ is a dual equivalence graph if we forget about the extra affine edges. Conjecturally, it appears that the symmetric functions that we get come from the plethysm of \emph{cyclic characters} with certain products of rectangular Schur functions. 
Let \[\ell_{k}^{(i)} = \frac{1}{k}\sum_{d\mid k} c(i,d)p_d^{n/d}\]
where $c(i,d) = \tfrac{\mu(i')\varphi(d)}{\varphi(i')}$, where $i' = \tfrac{i}{\gcd(i,d)}$, $\mu$ is the (number-theoretic) M\"obius function, and $\varphi$ is Euler's totient function. We call such $\ell_{k}^{(i)}$ \emph{cyclic characters}, as they were first studied by Foulkes as the Fr\"obenius images of the symmetric group representations obtained by inducing from an irreducible representation of a maximal cyclic subgroup \cite{Foulkes72}.
\begin{conj} \label{conj:characters}
Suppose that $R$ has distinct rectangles $R_j$ that appear with multiplicity $m_j$ (so $d_R = \gcd (m_j)$). Then for $i \in [d_R]$, the charge $i$ connected component of $\cal{T}(R)$ has character given by the plethysm
\[
\ell_{d_R}^{(i)} \left[s_{R_1}^{m_1/d_R}\cdots s_{R_k}^{m_k/d_R}\right].
\]
\end{conj}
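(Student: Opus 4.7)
My plan is to reduce the conjecture to the classical identity $\character(e_i U^{\otimes k}) = \ell_k^{(i)}[\character_U]$ recalled in Section~\ref{sec:characters}, applied to the module $U = V_{R_1}^{\otimes m_1/d_R} \otimes \cdots \otimes V_{R_\ell}^{\otimes m_\ell/d_R}$ and $k = d_R$, so that $U^{\otimes d_R} \cong V_{R_1}^{\otimes m_1} \otimes \cdots \otimes V_{R_\ell}^{\otimes m_\ell}$ as $GL(V)$-modules.  The conjecture then reduces to the claim that the $\chi^i$-isotypic component of $U^{\otimes d_R}$ under the cyclic block-permutation action of $C_{d_R}$ has $GL(V)$-character equal to the quasisymmetric generating function $\sum_{T \in \mathcal T(R)_i} F_{\overline D(T)}$ over the charge-$i$ component of the KR DEG.

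The combinatorial bridge will be the combinatorial $R$-matrix.  Grouping $R$ into $d_R$ identical blocks $\tilde R = (R_1^{m_1/d_R}, \ldots, R_\ell^{m_\ell/d_R})$, let $\tau : B^R \to B^R$ denote the composition of $R$-matrices that cyclically shifts the $d_R$ blocks.  Because $R$-matrices are affine crystal isomorphisms, $\tau$ commutes with all $e_i, f_i$, hence descends to a graph automorphism of $\mathcal T(R)$; because $\charge$ is defined via $R$-matrices, $\tau$ preserves $\charge$ exactly, and in particular preserves each component $\mathcal T(R)_i$; and by Yang-Baxter together with the uniqueness of the affine $R$-matrix, $\tau^{d_R} = \mathrm{id}$.

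The central step is then the trace identity
\begin{equation*}
\sum_{T \in \mathcal T(R)} F_{\overline D(T)}(x)\, \zeta^{j\,\charge(T)} \;=\; \trace\bigl( \tau^j \cdot \varphi_U(\diag(x)) \,\big|\, U^{\otimes d_R} \bigr)
\end{equation*}
for every $j \in \ZZ/d_R\ZZ$, where $\zeta = e^{2\pi i/d_R}$.  Given this identity, character orthogonality over $C_{d_R}$ immediately isolates the character of the $\chi^i$-isotypic subspace: averaging against $\chi^{-i}$ produces $\sum_{T \in \mathcal T(R)_i} F_{\overline D(T)}$ on the left and $\character(e_i U^{\otimes d_R}) = \ell_{d_R}^{(i)}[\character_U]$ on the right, which is exactly the conjecture.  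At $j = 0$ the identity collapses to the Schur identity $\sum_T F_{\overline D(T)} = s_{R_1}^{m_1}\cdots s_{R_\ell}^{m_\ell}$ already verified in Section~\ref{sec:characters}, providing a useful sanity check.

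I expect the trace identity to be the main obstacle.  It is a cyclic-sieving-type statement for $\otimes$KR crystals, analogous to Rhoades' theorem for promotion on rectangular standard Young tableaux, with $\charge$ playing the role of the $q$-statistic and $\tau$ playing the role of promotion.  One promising attack is to evaluate both sides via the Nakayashiki-Yamada-Shimozono energy formula and match fixed-point contributions of $\tau^j$ at each divisor $d \mid d_R$ against the power-sum expansion $\ell_{d_R}^{(i)} = \tfrac{1}{d_R}\sum_{d \mid d_R} c(i,d) p_d^{d_R/d}$.  An alternative is to bootstrap from $j = 1$, using Proposition~\ref{p:pr_automorphism} and the fact that powers of promotion generate a cyclic action whose orbits on $\mathcal T(R)$ descend to the $\tau$-action on $d_R$ blocks, and to argue inductively by exploiting the plethystic compatibility of cyclic characters with $p_d$-substitutions.
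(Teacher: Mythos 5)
This statement is Conjecture \ref{conj:characters} in the paper; the authors give no proof, so the only question is whether your proposal actually closes it. It does not. All of the content of your argument is concentrated in the displayed ``trace identity,'' which you explicitly leave unproven and for which you only name two possible lines of attack. That identity is not an auxiliary lemma but a reformulation of the conjecture: writing the left-hand side as $\sum_{i}\zeta^{ji}\sum_{\charge(T)\equiv i}F_{\overline{D}(T)}$ and applying orthogonality of characters of $C_{d_R}$, the validity of the identity for all $j$ is \emph{equivalent} to the assertion that the charge-$i$ class has character $\ell_{d_R}^{(i)}[\character_U]$, which is the conjecture itself. (The surrounding reductions are fine: that each charge class carries a well-defined Schur-positive character follows from Theorem \ref{t:connectedcomponents} together with the fact that $\cal{T}(R)$ is a disjoint union of type $A$ DEGs once the affine edges are forgotten, and the $j=0$ case is the theorem proved in Section \ref{sec:characters}.) The cases in which such an identity is actually known --- all $R_i$ single rows or single columns, or all $R_i$ a fixed rectangle --- are exactly the Lascoux--Leclerc--Thibon and Iijima results the paper already cites as the known instances of the conjecture, so the reduction does not extend what is known.

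There is also a genuine technical issue inside the reduction. For the identity $\trace\bigl(\tau^j\cdot\varphi_U(\diag(x))\bigr)$ to feed into the classical statement $\character_{e_iU^{\otimes d_R}}=\ell_{d_R}^{(i)}[\character_U]$, the operator $\tau$ must be the linearization of the literal cyclic permutation of tensor blocks on the module $U^{\otimes d_R}$. But you define $\tau$ as a composition of combinatorial $R$-matrices, which is a bijection of the crystal, not a linear operator on the module; its natural linear lift (the quantum $R$-matrix in the crystal limit) is not the tensor flip, and the two need not have equal twisted traces without further argument. Identifying the $\tau^j$-fixed-point count on the crystal, weighted by $x^{\mathrm{wt}}$, with $\trace\bigl(\sigma^j\varphi_U(\diag(x))\bigr)=\bigl(p_{d_R/\gcd(j,d_R)}[\character_U]\bigr)^{\gcd(j,d_R)}$ for the literal block shift $\sigma$ is precisely a cyclic-sieving phenomenon, analogous to Rhoades' theorem for promotion on rectangular tableaux, and nothing in the paper or in your proposal supplies the input (dual canonical bases, Kazhdan--Lusztig positivity, or an explicit combinatorial bijection) that such statements typically require. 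Until that identity is established, the conjecture remains open.
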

We note that since $\sum_{i = 1}^{d_R} \ell_{d_R}^{i} = p_1^{d_R}$, we can use simple plethystic identites to show that the sum of all such characters is the product of Schur functions from the previous theorem. For some special cases, a Schur expansion of these plethysms is known. If every $R_i$ is a column (so $s_{R_i}$ is an elementary symmetric function) or if every $R_i$ is a row (so $S_{R_i}$ is a homogeneous symmetric function), the expansion is known due to Lascoux, Leclerc, and Thibon \cite{LLT94}. The case of all $R_i$ being a fixed rectangle shape is known due to Iijima \cite{Iijima13}.

We can interpret the plethysms appearing in Conjecture \ref{conj:characters} as follows. Let $U$ be a tensor product of $GL_n(\CC)$ representations corresponding to multiples of fundamental weights, so that the character of $U$ is given by a product of rectangular Schur functions. Then the diagonal action of $GL_n(\CC)$ on $U^{\otimes k}$ commutes with the action of the cyclic group $C_k$ which cycles tensor factors. Thus, if $\chi$ is an irreducible character for $C_k$ corresponding to a primitive $k$'th root of unity, the corresponding isotypic components of $U^{\otimes k}$ are representations of $GL_n(\CC)$, and their characters correspond exactly to $\ell_{k}^{(i)}[\character_U(x_1,\ldots, x_n)]$ for some $i$.
 
\begin{example}
    Consider $R = ((1^3), (1^3))$ as in Figure \ref{KRDEG2}. The left and right connected components consist of the semicharge $0$ and $1$ vertices of $\cal{T}(R)$ respectively. One can compute that $\ell_2^{(0)} = s_2$ and $\ell_2^{(1)} = s_{1,1}$, and the characters of each connected component are given by 
    \begin{align*}
    2F_{1245} + F_{1234} + F_{2345} + F_{135}+F_{1235}+ F_{1345} + F_{134} + F_{235} + F_{24} = s_{2, 1, 1, 1, 1}+ s_{2, 2, 2} &= s_2 [s_{1,1,1}], \\
    F_{1235}+F_{12345}+F_{1345} + F_{124}+F_{234}+F_{245} + F_{1245} + F_{134} + F_{235} + F_{135}= s_{1, 1, 1, 1, 1, 1} + s_{2, 2, 1, 1} &=s_{1,1}[s_{1,1,1}]
    \end{align*}
\end{example}  
\begin{example}
Let $R = ((1^2), (1^2), (2),(2))$, so $d_R = 2$. $\cal{T}(R)$ has $2520$ vertices and two connected components. The characters of the two components are exactly $\ell_{2}^{(0)}[s_2s_{1,1}]$ and $\ell_{2}^{(1)}[s_2s_{1,1}]$:
\begin{align*}
    \ell^{(0)}_{2}[s_2s_{1,1}] &= s_{6,2} + 2s_{5,2,1} + 2s_{5,1,1,1} + s_{4,4} + 2s_{4,3,1} + 3s_{4,2,2} + 2s_{4,2,1,1} + 2s_{4,1,1,1,1} + 3s_{3,3,1,1} \\
    &\quad + 2s_{3,2,2,1} + 2s_{3,2,1,1,1} + s_{2,2,2,2} + s_{2,2,1,1,1,1},\\
    \ell^{(1)}_{2}[s_2s_{1,1}] &= s_{6,1,1}+s_{5,3} +2 s_{5,2,1}+ s_{5,1,1,1}+ 2 s_{4,3,1}
    + s_{4,2,2} + 4 s_{4,2,1,1} + s_{4,1,1,1,1}+ 2 s_{3,3,2} \\
    &\quad + s_{3,3,1,1}+ 2 s_{3,2,2,1} + 2 s_{3,2,1,1,1}+ s_{3,1,1,1,1,1} +s_{2,2,2,1,1}.
\end{align*}
\end{example}
\begin{example}
    Let $R = ((2^2), (2^2), (2^2))$, so $d_R = 3$. $\cal{T}(R)$ has $277200$ vertices and the characters of its three connected components are exactly $\ell_{3}^{(i)}[s_{2,2}]$ for each $i$. Since $\ell^{(1)}_3 = \ell_3^{(2)}$, the characters for two of the components coincide.
\end{example}
\bibliographystyle{abbrv}
\bibliography{refs}
\end{document}